\newtheorem{theorem}{Theorem}[section]
\newtheorem{proposition}[theorem]{Proposition}
\newtheorem{lemma}[theorem]{Lemma}
\newtheorem{corollary}[theorem]{Corollary}
\newtheorem{definition}[theorem]{Definition}
\newtheorem{example}[theorem]{Example}
\theoremstyle{remark}
\newtheorem{remark}[theorem]{Remark}
\theoremstyle{plain}
\newtheorem{assumption}[theorem]{Assumption}
\numberwithin{equation}{section}
\DeclareMathOperator*{\argmin}{arg\,min}
\newcommand{\dd}{\ensuremath{\,\mathrm d}}
\DeclareMathOperator{\E}{\mathbb E}
\DeclareMathOperator{\Lip}{Lip}
\newcommand{\N}{\mathbb{N}}
\newcommand{\R}{\mathbb{R}}
\DeclareMathOperator{\Size}{size}
\DeclareMathOperator{\Depth}{depth}
\DeclareMathOperator{\ReLU}{ReLU}
\DeclareMathOperator{\supp}{supp}
\definecolor{CharlesColor}{HTML}{990011}
\title{Functional SDE approximation inspired by a deep operator network architecture}
\author[1]{Martin Eigel}
\author[2]{Charles Miranda}
\affil[1]{Weierstrass Institute for Applied Analysis  and Stochastics, Berlin, Germany \thanks{\texttt{eigel@wias-berlin.de, charles.miranda@ec-nantes.fr}}}
\affil[2]{Centrale Nantes, Nantes Université, Laboratoire de Mathématiques Jean Leray UMR CNRS 6629, France}
\date{\today}
\begin{document}
    
    \maketitle
    \begin{abstract}
       A novel approach to approximate solutions of Stochastic Differential Equations (SDEs) by Deep Neural Networks (DNN) is presented.
The architecture is inspired by the notion of Deep Operator Networks (DeepONets), which are based on operator learning in function spaces in terms of a reduced basis also represented in the network. In our setting, we make use of a polynomial chaos expansion (PCE) of stochastic processes and call the corresponding architecture SDEONet.
This construction can be conceived as a theoretical tool, which enables the derivation of new DNN convergence and complexity results for the solution of SDEs with possible extensions to numerical solutions of nonlinear SDEs and Stochastic Partial Differential Equations (SPDEs).
       
The PCE has been used extensively in the area of uncertainty quantification (UQ) for parametric partial differential equations. This however is not the case with SDEs, where classical sampling methods dominate and functional approaches are seen rarely.
A main challenge with truncated PCEs occurs due to the drastic growth of the number of components with respect to the maximum polynomial degree and the number of basis elements.
The proposed SDEONet architecture aims to alleviate the issue of exponential complexity by learning an optimal sparse truncation of the Wiener chaos expansion. A complete convergence and complexity analysis is presented, making use of recent Neural Network approximation results.
While intended as a theoretical approach to foster a DNN analysis, some numerical experiments illustrate the practical performance of the suggested approach in 1D and higher dimensions.
    \end{abstract}

    \section{Introduction}
\label{sec:intro}

Stochastic differential equations (SDEs)~\cite{ksendal2003} extend ordinary differential equations (ODEs) by incorporating stochastic terms, resulting in stochastic process trajectories and necessitating integration with respect to white noise as formalised by It\=o. They are extensively applied in many technical fields such as molecular dynamics, financial mathematics, and more recently, deep learning, particularly in generative modelling~\cite{yang2023diffusion}.
A common numerical approach for solving SDEs is the Euler-Maruyama scheme, which is an adaptation of the explicit Euler scheme for ODEs. While its implementation and analysis are relatively straightforward, certain conditions are required from the SDE, which restrict the applicability of the scheme. Deep neural network (DNN) architectures can be designed to represent this method directly, as e.g. demonstrated in~\cite{E2017,miranda2024approximatinglangevinmontecarlo}.
In this work, however, we explore an alternative functional representation of SDEs utilising a polynomial chaos expansion (PCE). This employs tensorised orthogonal polynomials with respect to a probability measure as a basis.
Known convergence results of the PCE lead to a novel approach to a DNN convergence analysis for representing SDE solutions.
While we do not fully exploit it in this work, the proposed approach is more general than the EM scheme and can be used with a broader range of stochastic equations.
The Cameron-Martin theorem~\cite{cameron1947orthogonal,janson1997gaussian} states that any stochastic process with finite variance can be represented by a PCE. This representation has been widely employed in the analysis of parametric PDEs and their numerical treatment in the area of Uncertainty Quantification (UQ)~\cite{schwab2011sparse,cohen2015approximation,ernst2012convergence}.
Although functional approximations offer practical advantages for SDEs such as direct access to (approximations of) statistical quantities, stochastic pathwise Monte Carlo sampling methods are clearly predominant~\cite{holden1996stochastic}.
A principal reason for this is that functional approximations of irregular processes typically necessitate a ``fine'' discretisation for accuracy, involving many expansion terms and high polynomial degrees. This complexity, further compounded by the ``curse of dimensionality'', necessitates modern compression tools and complicates numerical scheme analysis.

In this paper, we tackle the functional approach utilising a DNN architecture called Deep Operator Network (DeepONet)~\cite{luDeepONetLearningNonlinear2021}, designed to learn operators in infinite-dimensional function spaces. DeepONets are composed of trunk and branch networks, representing a learning reduced basis and respective representation coefficients.
Mathematically, this architecture permits a comprehensive theoretical analysis as presented in~\cite{lanthalerErrorEstimatesDeepONets2022}. While DeepONets and similar methods like Fourier Neural Operators have been applied to (partial) differential equations~\cite{goswami2023physics,kovachki2023neural}, their application to SDEs is a new application.
By employing the reduced basis exploration of DeepONet with the PCE of an SDE as analysed in~\cite{huschtoAsymptoticErrorChaos2019}, we achieve a compressed functional SDE representation where the deterministic coefficients follow an appropriate ODE.
The structure of the architecture is illustrated in~\cref{fig:sketch_sdeonet}, where a Brownian motion $W$ is encoded by $\mathcal E$, resulting in the input $G$ of approximation $\mathcal A$ and reconstruction $\mathcal R$. The network's output is the SDE solution $X_t$. 

\begin{figure}
    \centering
    \includegraphics[scale=0.85]{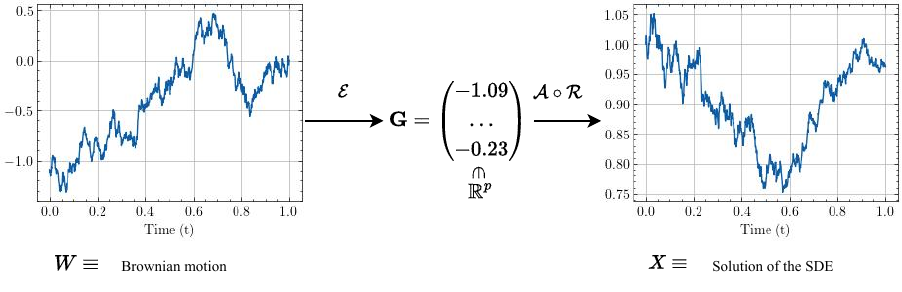}    
    \caption{Sketch of the SDEONet action working on input $G$ (processed Brownian motion, left), approximation $\mathcal A$ and reconstruction $\mathcal R$ of the SDE solution (right).}
    \label{fig:sketch_sdeonet}
\end{figure}

To describe our setting, assume the filtered probability space $(\Omega, \mathcal F, (\mathcal F_t)_{t \in [0,T]}, \mathbb P)$ with Brownian motion $(W_t)_{t \in [0,T]}$.
As a model problem, we consider the continuous stochastic process $(X_t)_{t \in [0,T]}$ that satisfies the SDE given by
\begin{equation}
    \dd X_t = \mu(t, X_t)\dd t + \sigma(t, X_t)\dd W_t, \quad\text{with}~X_0=x_0.
    \label{eq:sde}
\end{equation}

Our main contributions are:
\begin{itemize}
    \item Development of a DeepONet inspired architecture for the functional (Wiener chaos) representation of SDE solutions.
    \item Convergence and complexity analysis of this architecture in terms of the discretization parameters.
    To achieve this, recent results on NN approximations of polynomials and H\"older continuous functions are used~\cite{Schwab2023,Opschoor2021,Petersen2018}.
\end{itemize}

We present our main result~\cref{theorem:main_result} qualitatively for the devised SDEONet architecture in the following.

\begin{theorem}[Neural network approximation of a strong solution of a SDE]\label{theorem:main_result}
    Let $p,m=2^k \in \N$, $\mathcal G$ be a SDE solution operator (\cref{def:sde_sol_operator}) and $\varepsilon \in \left(0, \left[\frac{4}{e^2 T}\right]^{1/3}\right)$.
    Then, there exists a SDEONet $\mathcal N^{p,m}$ (\cref{def:sde_deeponet}) that satisfies
    \begin{align*}
        \hat E \leq & \min_{(q,\ell) \in J_p}\left( (1+x_0^2)\left(\int_0^T C_3(t, K)\left(\frac{1}{(q+1)!} + \frac{2T(1+t)}{\ell}\right) \dd t\right)\right)^{1/2}\\
        & + \sqrt{\varepsilon} \min_{(q,\ell) \in J_p} \left((1+x_0^2)\left(C_4(K,T) - \frac{1}{(q+1)!}\frac{T(C_5 T)^{2(q+1)}}{2(q+1)+1}\left(1+\frac{1}{C_5 T}\right)^{q+1}\right)\right)^{1/2}\\
        & + \varepsilon\sqrt{2(\varepsilon + p)},
    \end{align*}
    with $C_3(t,K)$ defined in \cref{theorem:l2_error_truncation}, $C_4(K,T):=\int_0^T e^{Bte^{Bt}} \dd t$ and $C_5=C_5(K,T)$ are the constants in \cref{lemma:ub_approx_error}.

    The SDEONet is composed of an \emph{approximator} $\mathcal A$ that satisfies
    \begin{align*}
        \Size(\mathcal A) &\leq C_1 p \max_{j \in \{1,\dots,p\}}|k_j^*|^3 \log(1+|k_j^*|)|k_j^*|_0^2 \log(p\varepsilon^{-1}),\\
        \Depth(\mathcal A) &\leq C_1 \max_{j \in \{1,\dots,p\}} |k_j^*|\log(1+|k_j^*|)^2|k_j^*|_0\log(1+|k_j^*|_0)\log(p\varepsilon^{-1}),
    \end{align*}
    with a $C_1 > 0$ independent of $p, m$, $\varepsilon$ and of a \emph{trunk net} $\tau$ that satisfies
    \begin{align*}
        \Depth(\tau) \leq (2+\lceil \log_2(n+1)\rceil)(12+n),\quad
        \Size(\tau) \leq C_2 p\left(\frac{\varepsilon}{\sqrt T}\right)^{-\frac{1}{n+1}},
    \end{align*}
    with a $C_2 > 0$ that depends only on the regularity of the coefficients $x_j$ of the polynomial chaos expansion \cref{eq:sde_params}.
\end{theorem}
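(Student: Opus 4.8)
The plan is to bound the trajectory mean-square error $\hat E$ by splitting it, via the triangle inequality in the underlying $L^2([0,T]\times\Omega)$ norm, into the truncation of the Wiener chaos expansion, the approximation of the chaos coefficient dynamics, and the neural network realisation error, and then to recombine the three estimates. For the first term I would start from the polynomial chaos representation $X_t = \sum_\alpha x_\alpha(t) H_\alpha$ of the strong solution, where the deterministic coefficients $x_\alpha$ solve the propagator system encoded in \cref{def:sde_sol_operator}. Truncating to the finite index set parametrised by the maximal degree $q$ and the number of active modes $\ell$ gives $X^{q,\ell}$, and \cref{theorem:l2_error_truncation} bounds $\|X - X^{q,\ell}\|$ by the first summand; the inner minimisation over $(q,\ell)\in J_p$ reflects the freedom to split the budget $p$ between degree and modes, with $\tfrac{1}{(q+1)!}$ controlling the degree truncation and $\tfrac{2T(1+t)}{\ell}$ the mode truncation.

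Second, I would replace the exact coefficients by their computable (time-discretised) surrogates $\tilde x_\alpha$ on the truncated set. Here \cref{lemma:ub_approx_error} controls the coefficient deviation, and expanding the squared norm $\|X^{q,\ell}-\tilde X^{q,\ell}\|^2$ while exploiting the orthonormality of the $H_\alpha$ isolates the constant $C_4(K,T)$ together with the degree-dependent correction appearing in the second summand; the scaling of this step produces its $\sqrt\varepsilon$ prefactor.

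Third, I would build the two network blocks and quantify their realisation error. The approximator $\mathcal A$ emulates the tensorised Hermite polynomials $H_{k_j^*}$ on the learned sparse support $\{k_j^*\}_{j=1}^p$: by the ReLU approximation bounds for multivariate polynomials in \cite{Opschoor2021,Schwab2023}, one such polynomial of degree $|k_j^*|$ and sparsity $|k_j^*|_0$ is realised within accuracy $\varepsilon$ at the stated cost, and maximising over $j$ together with the factor $p$ (resp.\ $\log(p\varepsilon^{-1})$) for assembling the $p$ outputs yields the $\Size(\mathcal A)$ and $\Depth(\mathcal A)$ bounds. The trunk net $\tau$ approximates the time-dependent coefficient maps $x_j(t)$ of \cref{eq:sde_params}, whose regularity is measured by $n$; the standard ReLU rate for such functions from \cite{Petersen2018} furnishes the $(\varepsilon/\sqrt T)^{-1/(n+1)}$ size and the logarithmic depth. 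Propagating the per-component accuracy $\varepsilon$ through the orthonormal basis and summing over the $p$ components gives the final summand $\varepsilon\sqrt{2(\varepsilon+p)}$.

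I expect the second step to be the main obstacle: cleanly separating the coefficient-approximation error from the truncation error while keeping their interaction explicit — so that the precise constants $C_4,C_5$ and the $\sqrt\varepsilon$ scaling emerge — requires a careful combination of \cref{lemma:ub_approx_error} with the orthogonality of the chaos basis and control of the cross-term in the squared-norm expansion. A secondary difficulty is matching the network complexity to the sparse multi-index structure $\{k_j^*\}$, so that the sparsity gains recorded by $|k_j^*|_0$ are retained rather than washed out by a worst-case dense estimate.
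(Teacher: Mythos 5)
Your high-level skeleton (triangle inequality splitting $\hat E$ into truncation plus two network-related terms, \cref{theorem:l2_error_truncation} with the minimisation over $J_p$ for the first summand, \cite{Schwab2023} for the approximator and \cite{Petersen2018} for the trunk complexity) matches the paper, but your middle step contains a genuine gap, and it is precisely the step you flagged as the main obstacle. You attribute the second summand $\sqrt{\varepsilon}\,\bigl((1+x_0^2)(C_4 - \cdots)\bigr)^{1/2}$ to replacing the exact coefficients $x_\alpha$ by surrogates $\tilde x_\alpha$ and claim that orthonormality of the chaos basis "isolates" $C_4$ and the degree-dependent correction. This cannot work: if you swap coefficients while keeping the \emph{exact} polynomials $\Psi_{k_j^*}$, orthonormality gives $\E\bigl[|\sum_j(x_{k_j^*}-\tilde x_j)\Psi_{k_j^*}|^2\bigr]=\sum_j|x_{k_j^*}-\tilde x_j|^2$, i.e.\ just the coefficient error $\varepsilon$, with no factor of the form $C_4(K,T)-\cdots$ at all. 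In the paper the ordering is the opposite (\cref{lemma:decomposition_error}): the second term $\hat E_\text{Approx}$ keeps the exact coefficients and replaces the polynomials $\Psi_{k_j^*}$ by their ReLU realisations $\widetilde{\Psi_j}$; Cauchy--Schwarz then pairs the approximator error $\E[\sum_j|\Psi_{k_j^*}-\widetilde{\Psi_j}|^2]\le\varepsilon$ (\cref{corollary:relu_approx_chaos}, whence the $\sqrt\varepsilon$) against the coefficient mass $\int_0^T\sum_j x_{k_j^*}(t)^2\dd t$, and it is \emph{this} mass that produces $(1+x_0^2)(C_4-\cdots)$. The third term $\hat E_\text{Recon}$ then replaces the coefficients by the trunk net, paired against $\sum_j\E[\widetilde{\Psi_j}^2]\le 2(\varepsilon+p)$ — a bound that is needed exactly because the $\widetilde{\Psi_j}$ are \emph{not} orthonormal; your phrase "propagating the accuracy through the orthonormal basis" misses why the $2(\varepsilon+p)$ appears.

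The second, related gap is the missing key ingredient for bounding the coefficient mass: \cref{lemma:decay_squared_coef}, which rests on Malliavin calculus — Stroock's formula (\cref{theorem:kernel_stroock}) identifying the chaos kernels as $\frac{1}{n!}\E[D^nX_t]$, the Malliavin-derivative estimate of \cref{proposition:malliavin_bound}, and a Taylor--Lagrange argument. Parseval/orthonormality alone would only give the crude bound $\sum_{|\alpha|\le q}x_\alpha(t)^2\le\E[X_t^2]$ via \cref{theorem:square_int_process}, which loses the subtracted term $\frac{1}{(q+1)!}\frac{T(C_5T)^{2(q+1)}}{2(q+1)+1}\bigl(1+\frac{1}{C_5T}\bigr)^{q+1}$ appearing in the theorem, so the stated estimate is unreachable along your route. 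Finally, your worry about cross-terms in a squared-norm expansion is a red herring: the paper applies the triangle inequality at the level of norms, so no cross-terms ever arise, and each of the three differences is handled by a separate Cauchy--Schwarz argument. Your description of the two network blocks and their size/depth bounds is essentially correct.
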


The central objective of this work is to present a novel approach to the representation of functional solutions for SDEs with NNs that can be mathematically analysed. Previous attempts to represent and analyse SDE solutions with NNs include learning the Kolmogorov backward SDE~\cite{beck2021solving} on the basis of an Euler discretisation scheme or on a Picard iteration.
Additionally, a ResNet-based architecture for Langevin SDE~\cite{eigel2023approximating} was presented, which was utilised for the purpose of interacting particle transport in the context of Bayesian inverse problems~\cite{garbuno2020affine,eigel2022less}.

An alternative compression technique to DNNs can be found in low-rank tensor formats~\cite{hackbusch2014numerical,nouy2017low}, in particular with modern hierarchical formats such as tensor trains (TT)~\cite{oseledets2011tensor}.
In~\cite{bayer2023pricing}, it was demonstrated that a TT based Longstaff-Schwarz algorithm is comparable to equivalent NN constructions~\cite{becker2019deep}.

Despite its unfavourable complexity for simple linear SDEs, the proposed scheme can be used for more general problems than the Euler-Maruyama scheme. In order to utilise the latter, it is necessary to have explicit access to the drift and diffusion terms.
For example in case of SPDEs, these terms are directly dependent on the differentials of the solution and such a numerical scheme hence would not be applicable.

The paper structure includes preliminary reviews of SDEs and their approximation in terms of Wiener chaos expansions in~\Cref{sec:sde}, an introduction to NN and DON in~\Cref{sec:DON}, and a description of the new SDEONet architecture in~\Cref{sec:sdeonet}.
The convergence analysis in~\Cref{sec:analysis} of this architecture combines results from Wiener chaos and NN approximation of polynomials and functions with H\"older regularity. The practical performance of SDEONets is illustrated experimentally in~\Cref{sec:experiments} based on benchmark problems in one and more dimensions.
    \section{SDEs and Wiener chaos}
\label{sec:sde}

In this section, we review polynomial chaos representations of stochastic processes in terms of Hermite polynomials, as presented in~\cite{Huschto2014}.
We introduce the specific setting we use for the construction of the SDEONet architecture in~\Cref{sec:DON}.
First, we recall the SDE setting and requirements for well-posedness and regularity of the solution.
 
\subsection{Definitions and notation}
Given a probability space $(\Omega, \mathcal F, \mathbb P)$ and a $\R^d$-valued Brownian motion $W$, we consider
\begin{itemize}
    \item $\{(\mathcal F_t); t \in [0,T]\}$, the filtration generated by the Brownian motion $W$,
    \item $L^p(\mathcal F) := L^p(\Omega, \mathcal F, \mathbb P), p \in \N^*$, the space of all $\mathcal F$-measurable random variables (r.v.) $X : \Omega \to \R^d$ satisfying $\|X\|_p^p := \E[\|X\|_{\ell_p}^p] < \infty$,
    \item $C^{p,q}(U \times V, W), p,q \in \N \cup \{\infty\}$, the space of functions $f : U \times V \to W$ that are $p$ continuously differentiable in the first component and $q$ continuously differentiable in the second component,
    \item $C^\beta(K) := \{f \in C^n(K) : \|f\|_{C^\beta} \leq \infty\}$, with $K \subset \R^d$ is compact and $\beta = (n,\xi) \in \N \times (0,1]$, the space of $\beta$-H\"older continuous functions, where
    \begin{align*}
        \|f\|_{C^\beta} &:= \max\left\{\max_{|\alpha| \leq n}\|\partial^\alpha f\|_\infty, \max_{|\alpha| = n}\Lip_\xi(\partial^\alpha f) \right\}\\
        \Lip_\xi(f) &:= \sup_{x \neq y \in K}\frac{|f(x)-f(y)|}{|x-y|^\xi}.
    \end{align*}
\end{itemize}

To ensure the uniqueness of the solution of \cref{eq:sde}, we henceforth require the following assumptions to be satisfied.
\begin{assumption}\label{assump:uniqueness_sde}
    Let $T>0$ and $\mu:[0,T]\times \R^d \to \R^d$, $\sigma : [0,T] \times \R^d \to \R^{d\times m}$ be $\mathcal F$-measurable functions satisfying,
    \begin{enumerate}
        \item linear growth: $\forall x \in \R^d, t \in [0,T], \|\mu(t,x)\|_2 + \|\sigma(t,x)\|_{\text{Fro}} \leq K(1+\|x\|_2)$,
        \item uniform Lipschitz continuity: $\forall x,y \in \R^d, t \in [0,T], \|\mu(t,x)-\mu(t,y)\|_2 + \|\sigma(t,x)-\sigma(t,y)\|_\emph{Fro} \leq K\|x-y\|_2$
    \end{enumerate}
\end{assumption}

Under these assumptions, \cite[Theorem~5.2.1]{ksendal2003} asserts that the SDE \cref{eq:sde} has a unique $t$-continuous solution $X_t$.
As a preparation for the analysis, we now introduce a notion of multiple stochastic integrals.
For $f \in L^2([0,T]^n)$ a symmetric function, the $n$-multiple stochastic integral $I_n(f)$ is defined as the stochastic integral,
\begin{equation}
    I_n(f):= \int_0^T \int_0^{t_n} \dots \int_0^{t_2} f(t_1,\dots,t_n) \dd W_{t_1}\dots \dd W_{t_n}.
    \label{eq:multiple_stochastic_integrals}
\end{equation}

\subsection{Wiener chaos expansion of SDEs}
\label{sec:wiener chaos}

In this section we recall the notion of the Wiener chaos expansion and convergence results required later.
For more information, we refer to~\cite{MR2200233,janson1997gaussian}.

Normalised Hermite polynomials are defined through the identities
\begin{align}
    H_0(x) := 1\text{ and }
    H_n(x) := \frac{(-1)^n}{\sqrt{n!}} \exp\left(\frac{x^2}{2}\right)\frac{\dd^n}{\dd x^n}\left(\exp\left(-\frac{x^2}{2}\right)\right)\text{ for } n\geq 1\label{eq:hermite}.
\end{align}
The $n$-th Wiener chaos $\mathcal H_n$ is the closed linear subspace of $L^2(\Omega, \mathcal F, \mathbb P)$ generated by the family of random variables $\left\{H_n\left(\int_0^T h_s \dd W_s\right) : \|h\|_{L^2([0,T])}=1\right\}$. The vector spaces $\mathcal H_n,n\geq 0$, are orthogonal, giving rise to the Wiener chaos expansion~\cite[Theorem~1.1.1]{MR2200233}
\begin{equation}
    L^2(\Omega, \mathcal F, \mathbb P) = \bigoplus_{n=0}^\infty \mathcal H_n.
\end{equation}
Hence, any random variable $Y \in L^2(\Omega, \mathcal F, \mathbb P)$ admits an orthogonal decomposition
\begin{equation}
    Y = y_0 + \sum_{k=1}^\infty \sum_{|n|=k} y_k^n \prod_{i=1}^\infty H_{n_i}\left(\int_0^T e_i(s)\dd W_s\right),\label{eq:chaos_expansion}
\end{equation}
where $n=(n_i)_{i \geq 1}$ is a sequence of positive integers determining the polynomial degree, $|n|=\sum_{i \geq 1}n_i$, and $(e_i)_{i \geq 1}$ is an orthonormal basis of $L^2([0,T])$.
The coefficients are given by projection,
\begin{align*}
    y_0 = \E[Y],\quad
    y_k^n = \E\left[Y \prod_{i=1}^\infty H_{n_i}\left(\int_0^T e_i(s)\dd W_s\right)\right].
\end{align*}

Next, we consider the one-dimensional continuous stochastic process, $(X_t)_{t \in [0,T]}$ satisfying \cref{eq:sde}.
Using \cref{eq:chaos_expansion} and Malliavin calculus, the following propagator system was derived in~\cite{Huschto2014}.

\begin{theorem}[{Propagator system \cite[Theorem~2]{Huschto2014}}]\label{theorem:propagator_system}
    Given \cref{assump:uniqueness_sde}, let $(X_t)_{t \in [0,T]}$ satisfy \cref{eq:sde} and assume that $(X_t)_t \in L^2([0,T]\times \Omega)$.
    Then, $X_t$ exhibits the chaos expansion 
    \begin{equation}
        X_t = \sum_{k \geq 0}\sum_{|\alpha|=k} x_\alpha(t) \underbrace{\prod_{i=1}^\infty H_{\alpha_i}\overbrace{\left(\int_0^T e_i(s)\dd W_s\right)}^{=:G_i}}_{=:\Psi_\alpha}
        \label{eq:sde_chaos}
    \end{equation}
    and the coefficients $x_\alpha(t)$ satisfy the system of ordinary differential equations (ODE)
    \begin{align}
        \frac{\dd x_\alpha}{\dd t}(t) &= \mu(t, X_t)_\alpha + \sum_{j=1}^\infty \sqrt{\alpha_j} e_j(t)\sigma(t, X_t)_{\alpha^-(j)}, \label{eq:time_ode_coeff}\\
        x_\alpha(0) &= 1_{\alpha=0}x_0, \label{eq:ic_ode_coeff}
    \end{align}
    where $\mu(t, X_t)_\alpha$ (resp. $\sigma(t, X_t)_\alpha$) denotes the $\alpha$-coefficients of the Wiener chaos expansion associated with the random variable $\mu(t,X_t)$ (resp. $\sigma(t,X_t)$), and $\alpha^-(j)=(\alpha_1,\dots,\alpha_{j-1},\alpha_j-1,\alpha_{j+1},\dots)$.
\end{theorem}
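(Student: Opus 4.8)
The plan is to obtain the ODE system by projecting the integral form of the SDE onto each chaos basis element $\Psi_\alpha$. I would start from the It\=o form of \cref{eq:sde},
\[
    X_t = x_0 + \int_0^t \mu(s, X_s)\dd s + \int_0^t \sigma(s, X_s)\dd W_s,
\]
and recall that the coefficients are recovered by the orthogonal projection $x_\alpha(t) = \E[X_t \Psi_\alpha]$. This uses that $\{\Psi_\alpha\}$ is orthonormal in $L^2(\Omega,\mathcal F,\mathbb P)$, which holds because the $G_i = \int_0^T e_i(s)\dd W_s$ are i.i.d.\ standard Gaussians (the $e_i$ forming an orthonormal basis of $L^2([0,T])$) and the normalised Hermite polynomials are orthonormal with respect to the Gaussian measure. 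I would then evaluate $\E[X_t \Psi_\alpha]$ term by term.

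The initial and drift contributions are elementary. Since $\E[H_n(G_i)] = 0$ for $n \geq 1$, one has $\E[x_0 \Psi_\alpha] = x_0\, 1_{\alpha = 0}$, giving the initial condition \cref{eq:ic_ode_coeff}; and a Fubini argument yields $\E[(\int_0^t \mu(s, X_s)\dd s)\Psi_\alpha] = \int_0^t \mu(s, X_s)_\alpha \dd s$, where $\mu(s, X_s)_\alpha$ is by definition the $\alpha$-coefficient of the chaos expansion of $\mu(s, X_s)$.

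The core of the argument is the diffusion term, where Malliavin calculus enters. Writing the stochastic integral as $\int_0^T \sigma(s, X_s)\,1_{[0,t]}(s)\dd W_s$ and applying the duality (integration-by-parts) formula between the It\=o/Skorohod integral and the Malliavin derivative $D$ gives $\E[(\int_0^t \sigma(s, X_s)\dd W_s)\Psi_\alpha] = \int_0^t \E[\sigma(s, X_s)\, D_s \Psi_\alpha]\dd s$. The decisive computation is the Malliavin derivative of a chaos element: from $D_s G_i = e_i(s)$ and the identity $H_n' = \sqrt{n}\,H_{n-1}$ for normalised Hermite polynomials, the product rule yields $D_s \Psi_\alpha = \sum_j \sqrt{\alpha_j}\, e_j(s)\,\Psi_{\alpha^-(j)}$. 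Substituting and projecting then produces the diffusion contribution $\int_0^t \sum_j \sqrt{\alpha_j}\, e_j(s)\,\sigma(s, X_s)_{\alpha^-(j)}\dd s$. Adding the three contributions and differentiating in $t$ gives exactly \cref{eq:time_ode_coeff}.

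The main obstacle I anticipate is not the formal computation but the rigorous justification of these Malliavin manipulations. One must verify that $X_s$, and hence $\sigma(s, X_s)$, lies in the domain $\mathbb D^{1,2}$ of the Malliavin derivative so that the duality formula applies; this is precisely where the Lipschitz and linear-growth hypotheses of \cref{assump:uniqueness_sde} are used, via the standard result that SDE solutions with Lipschitz coefficients are Malliavin differentiable. One also has to justify interchanging expectation, the time integral, and the infinite sum over $j$ (controlled by the $L^2$ assumption together with dominated convergence), and to argue that the resulting time integrals are differentiable in $t$, which follows from the $t$-continuity of the solution and the continuity of the integrands. Finally, note that $\mu(t, X_t)_\alpha$ and $\sigma(t, X_t)_{\alpha^-(j)}$ depend nonlinearly on the whole family $(x_\beta)_\beta$, so the system is coupled; the theorem only asserts that the coefficients satisfy it, so no further resolution is required.
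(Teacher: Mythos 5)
The paper never proves \cref{theorem:propagator_system}: it is imported verbatim from \cite[Theorem~2]{Huschto2014}, the surrounding text noting only that it ``was derived using the chaos expansion and Malliavin calculus''. Your argument is therefore not being compared against an in-paper proof but against the cited one, and it reconstructs exactly that standard derivation: projection $x_\alpha(t)=\E[X_t\Psi_\alpha]$ onto the orthonormal chaos basis, Fubini for the drift, the Skorohod--It\=o/Malliavin duality for the diffusion term, and the identity $D_s\Psi_\alpha=\sum_j\sqrt{\alpha_j}\,e_j(s)\,\Psi_{\alpha^-(j)}$, which is correct for the paper's normalisation of the Hermite polynomials since $H_n'=\sqrt{n}\,H_{n-1}$. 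The derivation is sound.

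Two refinements to your discussion of the ``main obstacle''. First, the duality formula $\E[F\,\delta(u)]=\E[\langle DF,u\rangle_{L^2([0,T])}]$ only requires $F=\Psi_\alpha\in\mathbb{D}^{1,2}$ (immediate: a polynomial in finitely many jointly Gaussian variables) and $u=\sigma(\cdot,X_\cdot)\mathbf{1}_{[0,t]}$ to lie in the domain of $\delta$; since $u$ is adapted and square-integrable (linear growth plus \cref{theorem:square_int_process}), it lies in that domain automatically and $\delta(u)$ coincides with the It\=o integral. Malliavin differentiability of $X_s$ itself is never needed, so the deep result you invoke (Malliavin regularity of solutions of Lipschitz SDEs) can be dropped; \cref{assump:uniqueness_sde} enters only through well-posedness and square-integrability. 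Second, the sum over $j$ in \cref{eq:time_ode_coeff} is finite for each fixed $\alpha$, because $|\alpha|<\infty$ forces all but finitely many $\alpha_j$ to vanish; hence no interchange of expectation with an infinite series needs justification. The only genuine technicality left is the one you correctly flag at the end: \cref{eq:time_ode_coeff} holds pointwise where the integrand is continuous (and only almost everywhere for discontinuous bases such as the Haar system used later in the paper), which is a caveat inherited by the theorem statement itself.
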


In order to approximate the process $(X_t)_t$, one considers the truncation,
\begin{equation}
    X_t^{p,k} := \sum_{j=0}^p \sum_{|\alpha|=j} x_\alpha(t) \prod_{i=1}^k H_{\alpha_i}\left(\int_0^T e_i(s)\dd W_s\right), \label{eq:chaos_expansion_truncatation}
\end{equation}
which uses orthogonal projections $\Psi_\alpha$ only with respect to the first $k$ basis elements $(e_i)_{1 \leq i \leq k}$ and only up to the $p$-th order Wiener chaos.
In \cite{huschtoAsymptoticErrorChaos2019}, the authors derive an upper bound on the $L^2$-error $\E[(X_t^{p,k}-X_t)^2]$.

\begin{theorem}[{$L^2$-error of the Wiener chaos truncation}]\label{theorem:l2_error_truncation}
    Given \cref{assump:uniqueness_sde} and let $(X_t)_{t \in [0,T]}$ satisfy \cref{eq:sde}.
    Moreover, assume that $\mu,\sigma \in C^{1,\infty}([0,T]\times \R)$ such that
    \begin{equation*}
        \left|\frac{\partial^{\ell+m}\mu}{\partial t^\ell \partial x^m}(t, x) - \frac{\partial^{\ell+m}\mu}{\partial t^\ell \partial x^m}(t, y) \right| + \left|\frac{\partial^{\ell+m}\sigma}{\partial t^\ell \partial x^m}(t, x) - \frac{\partial^{\ell+m}\sigma}{\partial t^\ell \partial x^m}(t, y) \right| \leq K|x-y|,\quad K>0,
    \end{equation*}
    for $t \in [0,T],\ x,y \in \R$, any $\ell \in \{0,1\}$, and $m \geq 0$.
    Then it holds that
    \begin{equation}
        \E[(X_t^{p,k} - X_t)^2] \leq C(1+x_0^2)\left(\frac{1}{(p+1)!} + \sum_{\ell=k+1}^\infty \left(E_\ell(t)^2 + \int_0^t E_\ell^2(\tau)\dd\tau\right) \right), \label{eq:l2_error_truncation}
    \end{equation}
    where $C = C(t, K)$ is a positive constant and the function $E_\ell(t)$ is defined by
    \begin{equation}
        E_\ell(t) := \int_0^t e_\ell(s)\dd s. \label{eq:int_basis}
    \end{equation}
\end{theorem}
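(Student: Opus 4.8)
The plan is to split the total error into two orthogonal contributions and estimate each in isolation. Writing $X_t^{p,\infty}$ for the expansion \cref{eq:chaos_expansion_truncatation} truncated at polynomial order $p$ but retaining \emph{all} basis functions $(e_i)_{i\ge 1}$, the orthogonality of the Wiener chaos decomposition yields the Pythagorean identity
\[
\E[(X_t^{p,k}-X_t)^2] = \E[(X_t-X_t^{p,\infty})^2] + \E[(X_t^{p,\infty}-X_t^{p,k})^2],
\]
since $X_t-X_t^{p,\infty}$ lives in chaos of order strictly above $p$ whereas $X_t^{p,\infty}-X_t^{p,k}$ lives in chaos of order at most $p$. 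The factorial term $\tfrac{1}{(p+1)!}$ of \cref{eq:l2_error_truncation} should come entirely from the first summand (the polynomial-order truncation), and the series in $E_\ell$ entirely from the second (the basis truncation).

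For the polynomial-order term I would invoke Stroock's formula, which identifies the symmetric kernel of the order-$n$ chaos component of $X_t$ with $\tfrac{1}{n!}\E[D^n X_t]$ and gives
\[
\E[(X_t-X_t^{p,\infty})^2] = \sum_{n>p}\frac{1}{n!}\bigl\|\E[D^n X_t]\bigr\|_{L^2([0,T]^n)}^2.
\]
The extra regularity assumed on $\mu,\sigma$ (the $C^{1,\infty}$ hypothesis with uniformly Lipschitz derivatives) is precisely what places $X_t$ in the smooth Malliavin class and lets me bound the iterated derivatives inductively: differentiating \cref{eq:sde} shows that $D^n X_t$ solves a linear SDE driven by the lower-order derivatives, and a Gronwall argument closes a recursion of the form $\sup_{s\le t}\E\|D^n X_s\|^2\le C^n(\dots)$, the $\tfrac1{n!}$ being generated by the nested time ordering of the iterated integrals \cref{eq:multiple_stochastic_integrals}. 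Summing the tail $\sum_{n>p}\tfrac{C^n}{n!}$ then produces the leading factor $\tfrac{1}{(p+1)!}$, while the linear-growth bound of \cref{assump:uniqueness_sde} is what supplies the $(1+x_0^2)$ prefactor.

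For the basis-truncation term I would use that discarding the modes $e_{k+1},e_{k+2},\dots$ from the driving noise amounts to replacing the indicator $\mathbf 1_{[0,t]}$ by its $L^2([0,T])$-projection onto $\Span\{e_1,\dots,e_k\}$. Because $\langle\mathbf 1_{[0,t]},e_\ell\rangle_{L^2([0,T])}=\int_0^t e_\ell(s)\dd s=E_\ell(t)$ by \cref{eq:int_basis}, the mass lost at time $t$ is exactly $\sum_{\ell>k}E_\ell(t)^2$. Propagating this perturbation of the noise through the dynamics with the It\^o isometry and a further Gronwall estimate over $[0,t]$ transports the error forward in time and accounts for the companion term $\int_0^t E_\ell^2(\tau)\dd\tau$; the Lipschitz and growth constants of \cref{assump:uniqueness_sde} again collapse into the single constant $C(t,K)$.

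I expect the factorial decay of the polynomial-order tail to be the crux. Establishing $\|\E[D^n X_t]\|^2\le C^n(\dots)$ uniformly in $n$ requires differentiating the SDE $n$ times in the Malliavin sense, controlling the resulting hierarchy of linear equations, and --- most delicately --- extracting the $1/n!$ gain from the nested time integration, which is exactly where the strengthened smoothness of $\mu,\sigma$ is indispensable and where a naive Lipschitz-only argument would fail. By contrast the basis-truncation estimate is comparatively routine once $E_\ell(t)$ is recognised as the projection coefficient of $\mathbf 1_{[0,t]}$; there the only real subtlety is the Gronwall step certifying that the dynamics inflate the truncation error by no more than the stated $\int_0^t E_\ell^2(\tau)\dd\tau$ correction.
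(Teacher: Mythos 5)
This theorem is not proved in the paper at all: the text introduces it with ``In \cite{huschtoAsymptoticErrorChaos2019}, the authors derive an upper bound\dots'' and imports the statement as \cref{theorem:l2_error_truncation}, so there is no in-paper proof to compare against. The closest internal material is the appendix proof of \cref{lemma:decay_squared_coef}, which carries out, for the complementary partial sums, exactly the computation you sketch for the polynomial-order tail: Stroock's formula (\cref{theorem:kernel_stroock}), Jensen's inequality, the Malliavin-derivative bound of \cref{proposition:malliavin_bound}, and integration over the nested time simplex to generate the $1/n!$. In that sense your proposal is a faithful reconstruction of the cited argument rather than a new route: the orthogonal (Pythagorean) split is valid, since $X_t-X_t^{p,\infty}$ lies in the chaoses of order $>p$ while $X_t^{p,\infty}-X_t^{p,k}$ lies in those of order $\le p$; Stroock's identity gives $\E[(X_t-X_t^{p,\infty})^2]=\sum_{n>p}\frac{1}{n!}\|\E[D^nX_t]\|_{L^2}^2$; and adaptedness (the derivative vanishes for $s_i>t$) plus the bound $\E[(D^n_{s_1,\dots,s_n}X_t)^2]\le C^n(1+x_0^2)e^{Cnt}$ yields $(1+x_0^2)\sum_{n>p}(Cte^{Ct})^n/n!$, the claimed factorial decay. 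Likewise, recognising $E_\ell(t)=\langle\mathbf 1_{[0,t]},e_\ell\rangle_{L^2([0,T])}$ as the lost projection mass of the Brownian path and propagating it through the dynamics by It\^o isometry and Gronwall is the mechanism that produces $E_\ell(t)^2+\int_0^t E_\ell^2(\tau)\dd\tau$.

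Three caveats keep this a plan rather than a proof. First, the crux you correctly identify --- the inductive bound $\E[(D^nX_t)^2]\le C^n(1+x_0^2)e^{Cnt}$, where the strengthened $C^{1,\infty}$ hypotheses enter --- is asserted, not established; it is precisely \cref{proposition:malliavin_bound} (Proposition 4.1 of \cite{huschtoAsymptoticErrorChaos2019}), which this paper also imports without proof. Second, in the basis-truncation step your comparison process (the solution driven by the truncated noise) is $\sigma(G_0,\dots,G_{k-1})$-measurable but \emph{not} of chaos order $\le p$, so it does not lie in the approximation space; you need the intermediate contraction step $\E[(X_t^{p,\infty}-X_t^{p,k})^2]\le\E[(X_t-\E[X_t\,|\,\sigma(G_i,i<k)])^2]$ before comparing with that process. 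Third, a cosmetic point inherited from the statement itself: the tail sum actually gives $\frac{(Cte^{Ct})^{p+1}}{(p+1)!}e^{Cte^{Ct}}$, so the prefactor multiplying $1/(p+1)!$ is not literally a constant $C(t,K)$ independent of $p$; your sketch is no looser than the theorem as quoted. All of these are fillable, so I would classify your proposal as a correct outline following essentially the same approach as the cited source.
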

%
In the following, we use the Haar basis $\left\{e_0, e_{2^{n-1}+j} : 1 \leq j \leq 2^{n-1}, n \geq 1\right\}$ defined by
\begin{align*}
    e_0(t) := \frac{\mathbf 1_{[0, T]}(t)}{\sqrt T},\quad
    e_{2^{n-1}+j}(t) := \sqrt{\frac{2^{n-1}}{T}}\left(\mathbf 1_{\left[T\frac{2j-2}{2^n}, T\frac{2j-1}{2^n}\right]}-\mathbf 1_{\left[T\frac{2j-1}{2^n}, T\frac{2j}{2^n}\right]}\right).
\end{align*}
For this basis, the integrals $G_i$ in~\cref{eq:sde_chaos} can be computed explicitly, which is used in our analysis.
In fact,
\begin{align*}
    \int_0^T e_0(t) \dd W_t &= \frac{1}{\sqrt T} W_T,\\
    \int_0^T e_{2^{n-1}+j}(t) \dd W_t &= \frac{2^{\frac{n-1}{2}}}{\sqrt T} \left(\left(W_{T\frac{2j-1}{2^n}} - W_{T\frac{2j-2}{2^n}}\right) - \left(W_{T\frac{2j}{2^n}} - W_{T\frac{2j-1}{2^n}}\right)\right).
\end{align*}
    \section{Deep operator networks}
\label{sec:DON}

In this section, we formally introduce neural networks and review results on deep operator networks~\cite{lanthalerErrorEstimatesDeepONets2022}.
These form the foundation of our NN architecture for SDEs that is presented in~\Cref{sec:sdeonet}.

\subsection{Neural Networks}
\label{sec:NN}

A general neural network definition is given as follows.
\begin{definition}[Neural network and $\sigma$-realisation]\label{def:nn}
    Let $d,s,L \in \N\setminus \{0\}$. A \emph{neural network} $\Phi$ with input dimension $d$, output dimension $s$ and $L$ layers is a sequence of matrix-vector tuples,
    \begin{equation*}
        \Phi = ((W^1, b^1),\dots,(W^L,b^L)),
    \end{equation*}
    where $W^\ell \in \R^{n_\ell \times n_{\ell-1}}$ and $b^\ell \in \R^{n_\ell}$, with $n_0=d$, $n_L=s$ and $n_1,\dots,n_{L-1}\in\N$.
    
    We denote by $\mathcal N_{d,L,s}$ the space of neural network $\Phi$ with input dimension $d$, output dimension $s$ and $L$ layers, and also $\mathcal N_{d,s} := \bigcup_{L \geq 1}\mathcal N_{d,L,s}$. If $K \subset \R^d$ and $\sigma : \R \to \R$ is an arbitrary activation function, the associated realisation of $\Phi$ with activation function $\sigma$ over $K$ is defined as the map $ R_\sigma \Phi : K \to \R^s$ such that
    \begin{equation*}
        R_\sigma \Phi(x) = A_L \circ \sigma \circ A_{L-1} \circ \dots \circ \sigma \circ A_1 (x),
    \end{equation*}
    where $A_\ell(x) = W^\ell \cdot x + b^\ell$ is an affine transformation.

    We also introduce the following nomenclature for a neural network $\Phi \in \mathcal N_{d,s}$,
    \begin{align*}
        \Depth(\Phi) := L,\qquad
        \Size(\Phi) := \|\Phi\|_0
        := \sum_{\ell=1}^L (n_{\ell-1} + 1)n_\ell.
    \end{align*}
\end{definition}

The parallelisation of neural networks as shown in \cref{fig:parallelisation_nn} is a common operation that we use in the subsequent analysis.
\begin{proposition}[{Parallelisation \cite[Definition~2.7]{Petersen2018}}]\label{proposition:parallelisation}
    Let $\Phi^1 = ((W_1^1,b_1^1),\dots,(W_{L}^1,b_{L}^1)) \in \mathcal N_{d,L,s_1}$ and $\Phi^2 = ((W_1^2,b_1^2),\dots,(W_{L}^2,b_{L}^2)) \in \mathcal N_{d,L,s_2}$ be two neural networks with $d$-dimensional input.
    Then $P(\Phi^1, \Phi^2) = ((\tilde A_1, \tilde b_1),\dots,(\tilde A_L, \tilde b_L))$, where
    \begin{equation*}
        \tilde A_1 := \begin{pmatrix}
            A_1^1\\
            A_1^2
        \end{pmatrix},\quad
        \tilde b_1 := \begin{pmatrix}
            b_1^1\\
            b_1^2
        \end{pmatrix},\quad
        \tilde A_\ell := \begin{pmatrix}
            A_\ell^1 & 0\\
            0 & A_\ell^2
        \end{pmatrix},\quad
        \tilde b_\ell := \begin{pmatrix}
            b_\ell^1\\
            b_\ell^2
        \end{pmatrix}
        \quad\text{for}~1 < \ell \leq L
    \end{equation*}
    is a neural network with $d$-dimensional input and $L$ layers, called the \emph{parallelisation} of $\Phi^1$ and $\Phi^2$. Moreover, $P(\Phi^1, \Phi^2)$ satisfies
    \begin{align*}
        \Size(P(\Phi^1, \Phi^2)) = \Size(\Phi^1) + \Size(\Phi^2)\quad\text{ and }\quad
        R_\sigma P(\Phi^1, \Phi^2) = (R_\sigma\Phi^1, R_\sigma \Phi^2).
    \end{align*}
\end{proposition}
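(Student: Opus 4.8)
\emph{Proof strategy.} The plan is to verify the three claims of the proposition in turn: that $P(\Phi^1,\Phi^2)$ is a well-defined element of $\mathcal N_{d,L,s_1+s_2}$, that its realisation is the concatenation of the two individual realisations, and that its size is additive. The statements about the number of layers and the input/output dimensions are immediate from the construction: the stacked first-layer matrix $\bigl(\begin{smallmatrix} W_1^1 \\ W_1^2\end{smallmatrix}\bigr)$ has $d$ columns and $n_1^1+n_1^2$ rows, each block-diagonal matrix for $1<\ell\le L$ has dimensions compatible with its predecessor, and the final output dimension is $s_1+s_2$; the construction lists exactly $L$ tuples, so $\Depth(P(\Phi^1,\Phi^2))=L$. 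Throughout I write $n_\ell^1,n_\ell^2$ for the layer widths of $\Phi^1,\Phi^2$.

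The substantive step is the realisation identity, which I would prove by induction on the layer index. Writing $z_\ell^i(x)$ for the post-activation vector of $\Phi^i$ at layer $\ell$, the key point is that the first layer must be treated separately: the two sub-networks share the \emph{same} input $x$, which cannot be produced as a concatenation at layer $0$, so it is instead broadcast into both streams by the vertical stacking $\tilde A_1$. Concretely $\tilde A_1 x = (A_1^1(x),A_1^2(x))^{\top}$, and since $\sigma$ acts componentwise it commutes with concatenation, giving $\tilde z_1 = (z_1^1(x),z_1^2(x))^{\top}$ as the base case. For the inductive step with $1<\ell\le L-1$, the block-diagonal form of $\tilde A_\ell$ keeps the two streams decoupled, so $\tilde A_\ell \tilde z_{\ell-1} = (A_\ell^1(z_{\ell-1}^1),A_\ell^2(z_{\ell-1}^2))^{\top}$, and applying $\sigma$ again preserves the concatenation structure. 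Evaluating the final affine layer (without activation) then yields $R_\sigma P(\Phi^1,\Phi^2)(x) = (R_\sigma\Phi^1(x),R_\sigma\Phi^2(x))$, which is the asserted identity.

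For the size identity I would compute $\Size$ layer by layer, reading the definition $\Size=\lVert\cdot\rVert_0$ as the number of nonzero entries in the weight matrices and bias vectors. The first layer contributes $(d+1)(n_1^1+n_1^2)$, which splits exactly into the sum of the first-layer contributions of $\Phi^1$ and $\Phi^2$; for each later layer the block-diagonal matrix has two \emph{zero} off-diagonal blocks, which therefore do not count, so its nonzero content equals the sum of the nonzero entries of $W_\ell^1$ and $W_\ell^2$ together with the stacked biases. Summing over $\ell$ gives $\Size(P(\Phi^1,\Phi^2))=\Size(\Phi^1)+\Size(\Phi^2)$.

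The argument is essentially bookkeeping and I do not anticipate a genuine obstacle; the two points requiring care are the separate treatment of the first layer (the shared input prevents a naive induction starting at layer $0$) and the fact that the size identity only holds because the structural zeros introduced by the block-diagonal layers are not counted in $\lVert\cdot\rVert_0$ — had $\Size$ instead counted all entries, the cross terms $n_{\ell-1}^1 n_\ell^2 + n_{\ell-1}^2 n_\ell^1$ would spoil additivity.
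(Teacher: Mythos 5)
Your proof is correct. Note, however, that the paper never proves this proposition at all --- it is imported as a cited result (Definition~2.7 of the Petersen--Voigtlaender reference), so there is no internal proof to compare against; your layer-wise induction (separate treatment of the shared-input first layer, block-diagonal decoupling of the two streams thereafter, componentwise $\sigma$ commuting with concatenation, and no activation after the final affine layer) is exactly the standard argument underlying that reference. Your closing observation deserves emphasis, because it flags a real inconsistency in the paper rather than mere bookkeeping: the paper defines $\Size(\Phi) := \|\Phi\|_0 := \sum_{\ell=1}^L (n_{\ell-1}+1)n_\ell$, which counts \emph{all} entries rather than only nonzero ones, and under that literal definition the additivity claim is false --- the parallelised network's size would exceed $\Size(\Phi^1)+\Size(\Phi^2)$ by exactly the cross terms $\sum_{\ell=2}^L \left(n_{\ell-1}^1 n_\ell^2 + n_{\ell-1}^2 n_\ell^1\right)$ that you identify. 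The proposition as stated is true only under the nonzero-entry reading of $\|\cdot\|_0$ (the convention of the cited reference), which is the reading you correctly adopt.
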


\begin{figure}
    \centering
    \includegraphics{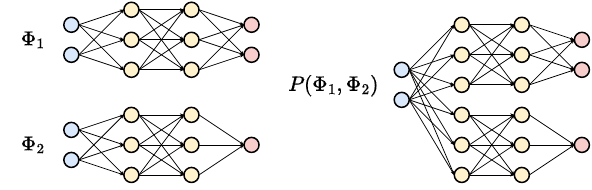}
    \caption{\textbf{Left:} Two neural networks. \textbf{Right:} Parallelisation with shared inputs of both networks according to \cref{proposition:parallelisation}.}
    \label{fig:parallelisation_nn}
\end{figure}
We recall approximation results for continuously differentiable and H\"older continuous functions that are used in the later analysis.
The neural network complexity required for approximating multivariate Hermite polynomials up to a certain accuracy has been examined in~\cite{Schwab2023}.
\begin{theorem}[{Deep ReLU neural networks approximation of multivariate Hermite polynomials \cite[Theorem~3.7]{Schwab2023}}]\label{theorem:nn_approx_hermite}
    Let $\Lambda \subset \{\alpha \in \N^\infty : |\alpha| < \infty\}$ be \emph{finite}.
    For every $\varepsilon \in (0, e^{-1})$ there exists a neural network $\Phi_\varepsilon$ such that
    \begin{equation*}
        \max_{\alpha \in \Lambda}\|H_\alpha - \tilde H_{\varepsilon,\alpha}\|_{L_\mu^2(\R^{|\supp(\Lambda)|})} \leq \varepsilon,
    \end{equation*}
    where $H_{\varepsilon,\alpha} = R_\sigma \Phi_\varepsilon : \R^{|\supp(\Lambda)|} \to \R^{|\Lambda|}$, $\supp(\Lambda) := \{j \in \supp(\alpha) : \alpha \in \Lambda\}$ and $\mu$ is the multivariate Gaussian measure.
    Moreover, there exists a positive constant $C$ (independent of $m(\Lambda) := \max_{\alpha \in \Lambda}|\alpha|$, $d(\Lambda) := \max_{\alpha \in \Lambda}|\alpha|_0$ and of $\varepsilon$) such that
    \begin{align*}
        \Size(\Phi_\varepsilon) &\leq C|\Lambda| m(\Lambda)^3 \log(1+m(\Lambda)) d(\Lambda)^2 \log(\varepsilon^{-1}),\\
        \Depth(\Phi_\varepsilon) &\leq C m(\Lambda) \log(1+m(\Lambda))^2 d(\Lambda) \log(1+d(\Lambda))\log(\varepsilon^{-1}).
    \end{align*}
\end{theorem}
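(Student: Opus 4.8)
The plan is to exploit the tensor-product structure of the multivariate Hermite polynomials, $H_\alpha(x) = \prod_{j \in \supp(\alpha)} H_{\alpha_j}(x_j)$, and to reduce the entire construction to two elementary ReLU building blocks: an approximate scalar product network $\Pi_\delta$ with $R_\sigma \Pi_\delta(a,b) \approx ab$ on a box $[-B,B]^2$, whose $\Size$ and $\Depth$ scale like $\log(B\delta^{-1})$ (available from the ReLU product approximations of~\cite{Petersen2018,Opschoor2021}), and the parallelisation of \cref{proposition:parallelisation}. First I would build, for a single coordinate, a subnetwork that outputs the whole vector $(\tilde H_0(x),\dots,\tilde H_{m(\Lambda)}(x))$ by emulating the three-term recurrence $\sqrt{n+1}\,H_{n+1}(x) = x\,H_n(x) - \sqrt{n}\,H_{n-1}(x)$. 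Each step costs one approximate product $x\cdot\tilde H_n(x)$ followed by an affine combination, and the two preceding Hermite values are carried through the layers using the ReLU identity $\mathrm{id} = \ReLU(\cdot)-\ReLU(-\cdot)$. This gives a univariate block whose depth is proportional to $m(\Lambda)$ times the depth of one product network and whose size is proportional to $m(\Lambda)$ times its size.

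Next, for each multi-index $\alpha \in \Lambda$ I would assemble the product $\prod_{j}\tilde H_{\alpha_j}(x_j)$ from the relevant univariate outputs by a tree of at most $d(\Lambda)$ approximate multiplications, and then parallelise these $|\Lambda|$ product subnetworks via \cref{proposition:parallelisation}, letting them share the univariate blocks over the $|\supp(\Lambda)|$ active coordinates and padding all branches to a common depth. Parallelisation makes the sizes add, which produces the $|\Lambda|$ prefactor in the size estimate, while the depth is governed by the longest branch, namely one univariate recurrence followed by one product tree. The polynomial-in-$(m,d)$ prefactors $m(\Lambda)^3\log(1+m(\Lambda))\,d(\Lambda)^2$ in the size bound (respectively $m(\Lambda)\log(1+m(\Lambda))^2\,d(\Lambda)\log(1+d(\Lambda))$ in the depth bound) then arise from counting these $O(m(\Lambda))$ recurrence steps and $O(d(\Lambda))$ multiplications together with the magnitude bookkeeping below; in particular the depth is a sum of a univariate and a product-tree contribution, which is bounded crudely by their product.

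The delicate part is the error analysis over the unbounded Gaussian domain, and this is where I expect the main obstacle. I would fix a truncation box $[-M,M]^{|\supp(\Lambda)|}$ with $M \sim \sqrt{\log(\varepsilon^{-1}) + m(\Lambda)\log m(\Lambda)}$, chosen so that the Gaussian tail $\int_{\|x\|_\infty > M}|H_\alpha|^2\dd\mu$, controlled by Cramér-type pointwise bounds on the Hermite polynomials together with Gaussian concentration, is at most $\varepsilon^2/2$; outside the box I would clip the network output so that its own tail contributes nothing further. On the box the remaining task is to keep the accumulated approximation error below $\varepsilon$, and the difficulty is that the intermediate quantities $H_n(x)$ and the partial products are large there, so that the absolute error $\delta$ of each product network is amplified as it propagates through the recurrence and the product tree. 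The crux is therefore a stability estimate bounding these amplification factors in terms of $M$ and $\max_{|x|\le M}|H_n(x)|$, followed by a choice of per-operation accuracy $\delta = \delta(\varepsilon,m,d)$ with $\log(\delta^{-1})\lesssim \log(\varepsilon^{-1})$ up to lower-order logarithmic terms that are absorbed into the $\log(1+m(\Lambda))$ and $\log(1+d(\Lambda))$ factors. Substituting $\log(\delta^{-1})$ into the size and depth of each $\Pi_\delta$ and summing over all branches and multi-indices then yields the stated $\Size$ and $\Depth$ bounds.
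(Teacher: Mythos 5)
You should note at the outset that the paper does not prove \cref{theorem:nn_approx_hermite} at all: it is imported verbatim, with citation, from \cite[Theorem~3.7]{Schwab2023}. So your proposal can only be judged against the requirements of the statement itself (and the strategy of the cited reference). Your overall frame --- tensorization of $H_\alpha$, approximate ReLU product networks, parallelisation via \cref{proposition:parallelisation}, and truncation of the Gaussian domain to a box with clipping outside --- is the right one. However, two of your quantitative steps genuinely fail.

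The decisive gap is the depth bound. Your univariate block composes the $m(\Lambda)$ steps of the three-term recurrence \emph{in series}, one approximate product per step. On the truncation box $[-M,M]$ with $M \sim \sqrt{m(\Lambda)+\log(\varepsilon^{-1})}$, the intermediate values $|H_n(x)|$ reach size $e^{\Theta(m(\Lambda))}$, and the error-amplification factor of the recurrence, $\prod_{n \le m}\bigl(M/\sqrt{n+1}\bigr) \sim e^{c\,m(\Lambda)}$, is exponential as well. Hence the per-operation accuracy must satisfy $\log(\delta^{-1}) \gtrsim m(\Lambda) + \log(\varepsilon^{-1})$, i.e.\ it grows \emph{linearly} in $m(\Lambda)$ --- contrary to your claim that $\log(\delta^{-1}) \lesssim \log(\varepsilon^{-1})$ up to absorbable logarithmic terms. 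Since a ReLU product network of accuracy $\delta$ on $[-B,B]^2$ requires depth of order $\log(B/\delta)$ (constant-depth ReLU products with polylogarithmic size do not exist), each of your $m(\Lambda)$ sequential stages has depth $\gtrsim m(\Lambda)+\log(\varepsilon^{-1})$, so your network has depth $\Omega(m(\Lambda)^2)$ for fixed $\varepsilon$. This is incompatible with the claimed bound $C\,m(\Lambda)\log(1+m(\Lambda))^2 d(\Lambda)\log(1+d(\Lambda))\log(\varepsilon^{-1})$ as $m(\Lambda)\to\infty$; only the size bound survives, because the extra factors of $m(\Lambda)$ are absorbed into $m(\Lambda)^3$. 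The missing idea is to keep the number of \emph{sequentially composed} multiplications logarithmic in $m(\Lambda)$: for instance, write $H_n(x) = (n!)^{-1/2}\prod_{i=1}^n (x-x_{n,i})$ over its $n$ real roots and evaluate this product (and the product over coordinates) by balanced binary trees, so that only $O(\log(1+m(\Lambda))+\log(1+d(\Lambda)))$ product stages are chained; the per-stage depth $O\bigl(m(\Lambda)\log(1+m(\Lambda))+\log(\varepsilon^{-1})\bigr)$ then reproduces the stated depth.

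A secondary but real gap is the tail estimate. Cram\'er's pointwise bound $|H_n(x)| \le C e^{x^2/4}$ combined with Gaussian concentration only yields $|H_n(x)|^2 e^{-x^2/2} \le C^2$, a bounded and non-decaying integrand, so $\int_{|x|>M}|H_n|^2\,\dd\mu$ cannot be made small this way. You need an ingredient beyond Cram\'er: either hypercontractivity, $\|H_n\|_{L^4_\mu} \le 3^{n/2}$, which with Cauchy--Schwarz gives a tail bound of order $3^n e^{-M^2/4}$, or the exponential decay of Hermite functions in the classically forbidden region $|x| > \sqrt{2n+1}$. Either repairs your choice $M \sim \sqrt{m(\Lambda)+\log(\varepsilon^{-1})}$, but as written that step fails.
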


\begin{remark}
    The result above gives complexity results for approximating multivariate Hermite polynomials using Deep ReLU neural networks.
    Instead, if we use the Rectified Power Unit (RePU) activation function defined by $\operatorname{RePU}^p : x \mapsto \max(x,0)^p$, with $p \geq 2$, then there exists a Deep Neural network with this activation function which represents \textit{exactly} a multivariate polynomial.
    From \cite[Proposition~2.14]{Opschoor2021} we have that there exists a Deep RePU Neural network $\phi$ which represents exactly any polynomial $p \in \mathbb P_\Lambda$, and such that
    \begin{align*}
        \Size(\phi) \leq C |\Lambda|,\quad
        \Depth(\phi) \leq C\log(|\Lambda|),
    \end{align*}
    with a constant $C > 0$ depending only on the power $p$.
\end{remark}

\begin{theorem}[{Approximation of $\beta$-H\"older continuous function \cite[Theorem~3.1]{Petersen2018}}]\label{theorem:approx_holder_nn}
    Let $d \in \N$, $B,p > 0$ and $\beta=(n,\xi) \in \N \times (0,1]$.
    Then, there exists a constant $c = c(d,n,\xi,B) > 0$ such that for any function $f \in C^\beta([-1/2,1/2]^d)$ with $\|f\|_{C^\beta} \leq B$ and any $\varepsilon \in (0,1/2)$ there is a neural network $\Phi_\varepsilon^f$ such that
    \begin{align*}
        \|R_\sigma(\Phi_\varepsilon^f) - f\|_{L^p([-1/2,1/2]^d)} < \varepsilon,\quad
        \|R_\sigma(\Phi_\varepsilon^f)\|_\infty \leq \lceil B \rceil,
    \end{align*}
    and
    \begin{align*}
        \Depth(\Phi_\varepsilon^f) \leq (2 + \lceil \log_2(n + \xi)\rceil)\left(11+\frac{n+\xi}{d}\right),\quad
        \Size(\Phi_\varepsilon^f) \leq c\varepsilon^{-\frac{d}{n+\xi}}.
    \end{align*}
\end{theorem}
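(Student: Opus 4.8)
The plan is to use the classical local-Taylor-plus-partition-of-unity construction for ReLU approximation of smooth functions, pushing all $\varepsilon$-dependence into the network \Size{} while keeping the \Depth{} fixed. First I would fix a resolution $N \in \N$ and tile $[-1/2,1/2]^d$ by $N^d$ subcubes $Q_\nu$ of side length $h = 1/N$ with centres $x_\nu$. On each $Q_\nu$ I replace $f$ by its order-$n$ Taylor polynomial $P_\nu(x) = \sum_{|\alpha| \le n} \frac{D^\alpha f(x_\nu)}{\alpha!}(x-x_\nu)^\alpha$, which is available since $f \in C^\beta$ with $\beta=(n,\xi)$. The hypothesis $\|f\|_{C^\beta} \le B$, in particular the $\xi$-Hölder bound on the $n$-th derivatives, controls the Taylor remainder on $Q_\nu$ by a constant multiple of $h^{n+\xi}$. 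To glue the local models I would use a partition of unity $\{\phi_\nu\}$ subordinate to the grid, each $\phi_\nu$ a tensor product of one-dimensional hat functions, so that $\sum_\nu \phi_\nu \equiv 1$ and at most $2^d$ of the $\phi_\nu$ are nonzero at any point. The target approximant is $\tilde f = \sum_\nu \phi_\nu P_\nu$, and choosing $N \sim \varepsilon^{-1/(n+\xi)}$ makes $\|f - \tilde f\|_{L^p} \lesssim h^{n+\xi} \sim \varepsilon$.

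Second, I would realise $\tilde f$ as a ReLU network. The hat functions are exactly representable by a shallow ReLU block, and both the bumps $\phi_\nu$ (products of $d$ hats) and the monomials $(x-x_\nu)^\alpha$ of degree at most $n$ are obtained by multiplying a bounded number of factors. Each two-input multiplication is the standard gadget built from an approximate squaring map via $uv = \tfrac14((u+v)^2 - (u-v)^2)$, and the required products are formed by a balanced binary tree of such gadgets; this tree has depth of order $\log_2(n+\xi)$, which is the origin of the factor $2 + \lceil \log_2(n+\xi)\rceil$. The full approximant is then assembled by parallelising (\cref{proposition:parallelisation}) the $O(N^d)$ local units and collapsing them with a final linear layer, yielding $\Size(\Phi_\varepsilon^f) \sim N^d = O(\varepsilon^{-d/(n+\xi)})$, as claimed.

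To enforce the uniform bound, I would append a clipping map $t \mapsto \min(\max(t, -\lceil B\rceil), \lceil B\rceil)$, which is realisable with two additional ReLU layers and costs only constant depth. Since $\|f\|_\infty \le B \le \lceil B\rceil$, this truncation cannot increase the $L^p$ distance to $f$, so both the accuracy and the bound $\|R_\sigma(\Phi_\varepsilon^f)\|_\infty \le \lceil B\rceil$ hold simultaneously.

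The hard part is the \emph{$\varepsilon$-independence} of the depth. Realising the multiplication gadgets to absolute accuracy by the usual telescoping sawtooth construction would require $\sim \log(1/\varepsilon)$ serial compositions, making the depth grow with $\varepsilon$. The key to the stated bound is to avoid this: because each local factor is uniformly bounded and only $2^d$ bumps overlap, and because the error is measured in $L^p$ rather than $L^\infty$, a fixed number of squaring refinements per gadget suffices, and all remaining $\varepsilon$-dependence is absorbed into the number of parallel units, i.e. into \Size. Carrying out this bookkeeping carefully — balancing the per-unit polynomial-degree depth (the $\log_2(n+\xi)$ factor) against the grid dimension (the $11 + (n+\xi)/d$ factor) and verifying that the accumulated multiplication errors remain of order $h^{n+\xi}$ — is the main technical obstacle and the crux of the argument.
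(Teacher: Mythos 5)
First, a point of reference: the paper does not prove this statement at all; it is quoted from \cite{Petersen2018} (Theorem~3.1), so your attempt has to be measured against the construction in that reference. Your overall plan — a uniform grid of $N^d$ cubes with $N \sim \varepsilon^{-1/(n+\xi)}$, local degree-$n$ Taylor polynomials whose remainder is $O(h^{n+\xi})$ by the H\"older bound, ReLU multiplication gadgets assembled in a binary tree of depth $\sim \log_2(n+\xi)$, parallelisation of the local units, and a final clipping layer to secure $\|R_\sigma(\Phi_\varepsilon^f)\|_\infty \leq \lceil B\rceil$ — is the correct family of techniques, and those parts of the sketch are sound.

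The genuine gap is exactly at the point you yourself flag as ``the crux,'' and the mechanism you propose there does not work. If every multiplication gadget is run with a fixed, $\varepsilon$-independent number of squaring refinements, then its pointwise error is a fixed constant $\delta_0$ \emph{on the whole support of the corresponding bump}, not on a set of small measure; bounded overlap ($2^d$) and measuring the error in $L^p$ rather than $L^\infty$ do not help, and the total error stays of order $\delta_0$ instead of tending to $0$. If instead you run each per-cube gadget to the accuracy $h^{n+\xi}$ that your Taylor analysis requires, then with depth constrained as in the statement each gadget needs size at least of order $N^{(n+\xi)/(2L')}$ (a depth-$L'$ ReLU network has too few linear pieces otherwise), and multiplying by the $N^d$ cubes gives a total size that exceeds the claimed $\varepsilon^{-d/(n+\xi)}$ by a fixed positive power of $\varepsilon^{-1}$. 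So your construction either fails to converge or fails the size bound. The cited proof needs two further ideas that are absent from your sketch: (i) the depth--size tradeoff for approximate multiplication, with per-gadget depth proportional to $(n+\xi)/d$ — this is precisely the origin of the factor $11+(n+\xi)/d$, which is not a ``grid dimension'' bookkeeping term as you suggest; and (ii) the expensive approximate-monomial subnetworks are built \emph{once}, acting on a rescaled coordinate shared by all cubes, rather than once per cube; the per-cube work then reduces to a linear combination of these shared outputs, gated by approximate cube indicators, and multiplication of a bounded value by a $\{0,1\}$-valued indicator is \emph{exactly} ReLU-representable at constant cost. The indicator errors are confined to thin boundary layers of small measure, and that — not the multiplication gadgets — is the one place where $L^p$ with $p<\infty$ is genuinely used. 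Without (i) and (ii), the bookkeeping you defer cannot be made to close.
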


\subsection{Operator Neural Networks}
\label{sec:operator networks}

The basis for our NN construction are recent results on operator networks.
Operators are mappings between infinite dimensional function spaces.
Prominent examples are the solution operators for ODEs and PDEs, which map function space inputs to the solution of the differential equation in another function space \cite{tianpingchenUniversalApproximationNonlinear1995,luDeepONetLearningNonlinear2021,li2021fourier,kovachki2023neural}.
Typical inputs are parameters describing coefficients or initial and boundary data, which in particular is common in Uncertainty Quantification.
The differential equation setting also provides a mathematical framework for statistical inverse problems, where the object of interest is the inverse operator that maps some observables to the underlying model data that is to be inferred~\cite{stuart2010inverse,gao2023adaptive}.
In recent years, machine learning-based operator approximation has attracted growing interest due to the possibly high cost of classical operator approximation techniques, particularly those related to high-dimensional parametric and nonlinear PDEs~\cite{Marcati2023,Deng2022}.
Opposite to simulation methods, operator learning infers operators from solution data and a well-known approach is given by the DeepONet architecture~\cite{luDeepONetLearningNonlinear2021}.
DeepONet can query any coordinate in the (parameter) domain to obtain the value of the output function.
However, for training and testing, the input function must be evaluated at a set of predetermined locations (often called ``snapshots'' in reduced basis methods), which requires a fixed observation grid for all observations.

\begin{remark}
For illustration, consider the 1D dynamical system defined on domain $[0,1]$ by
\begin{equation*}
    \frac{\dd u}{\dd x}(x) = f(u(x), \phi(x), x),\qquad u(0)=0.
\end{equation*}
The \emph{operator} $\mathcal G$ that maps the perturbation $\phi$ to the solution $u$ satisfies
\begin{equation*}
    (\mathcal G \phi)(x) = \int_0^x f((\mathcal G \phi)(y), \phi(y), y)\dd y.
\end{equation*}
In the linear case $f(u(x), \phi(x), x) = \phi(x)$, the considered operator to be learned is the \emph{antiderivative operator}
\begin{equation*}
    \mathcal G : \phi \mapsto \left(u: x \mapsto \int_0^x \phi(y)\dd y\right).
\end{equation*}
\end{remark}

\begin{figure}
    \centering
    \begin{tikzpicture}
        \matrix (m) [matrix of math nodes,row sep=8em,column sep=8em,minimum width=2em]
        {
        X & Y\\
        \R^m & \R^p\\};
        \path[-stealth]
            (m-1-1) edge [dashed] node [left, align=center] {Encoder\\$\mathcal E : u \mapsto \mathbf u := (u(x_1),\dots,u(x_m))$} (m-2-1)
                    edge node [above] {$\mathcal G$} (m-1-2)
            (m-2-1.east|-m-2-2) edge [dashed] node [below, align=center] {Approximator\\$\mathcal A : \mathbf u \mapsto (\beta_1(u),\dots,\beta_p(u))$} (m-2-2)
            (m-2-2) edge [dashed] node [right, align=center] {Reconstructor\\ $\mathcal R : \beta \mapsto \sum_{i=1}^p \beta_i(u) \tau_i$} (m-1-2);
    \end{tikzpicture}
    \caption{Structure of the deep operator network.}
    \label{fig:DeepONet_diagram}
\end{figure}
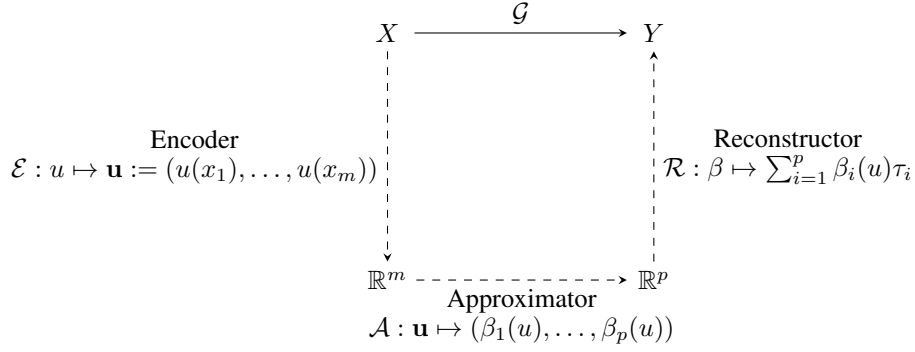

We henceforth assume that $D \subset \R^d$ and $U \subset \R^n$ are compact domains (e.g. with Lipschitz boundary).
The architecture of a deep operator network is depicted in~\cref{fig:DeepONet_diagram}, where the operator $\mathcal G$ is to be represented approximately.
For this, the encoder $\mathcal E$ results in a finite dimensional input representation that is mapped via the approximator $\mathcal A$ to the finite dimensional output.
Eventually, this is transferred to the image of $\mathcal G$ by the reconstruction $\mathcal R$.
The following definition makes this procedure rigorous.

\begin{definition}[{Deep operator network (DeepONet) \cite[Definitions~2.1 \& 2.4]{lanthalerErrorEstimatesDeepONets2022}}]\label{def:deeponets}
    Assume separable Banach spaces $X,Y$ with continuous embeddings $\iota : X \hookrightarrow L^2(D)$ and $\bar\iota : Y \hookrightarrow L^2(U)$.
    Let $\mu \in \mathcal P_2(X)$ be a Borel probability measure on $X$ with finite second moments such that there exists $A \subset X$ with $\mu(A)=1$ and $A$ consists of continuous functions.
    Moreover, let $\mathcal G : X \to Y$ be a Borel measurable mapping such that $\mathcal G \in L_\mu^2$.
    For the construction of the DeepONet architecture depicted in \cref{fig:DeepONet_diagram}, three \emph{operators} are used:
    \begin{itemize}
        \item \textbf{Encoder}: Given a set of \emph{sensor} points $\{x_j\}_{j=1}^m\subset X$, define the linear mapping
        \begin{equation}
            \mathcal E : \left\{
            \begin{array}{rl}
                C(D) & \longrightarrow \R^m \\
                u & \longmapsto (u(x_1), \dots, u(x_m))
            \end{array}
            \right.
            \label{eq:deeponet_encoder}
        \end{equation}
        as the \emph{encoder} mapping.
        \item \textbf{Approximator}: Given sensor points $\{x_j\}_{j=1}^m$, the \emph{approximator} is a deep neural network $\mathcal A \in \mathcal N_{m,p}$.
        \item Given the encoder and approximator, we define the \textbf{branch net} \begin{equation}
            \beta : \left\{
            \begin{array}{rl}
                C(D) & \longrightarrow \R^p \\
                u & \longmapsto R_\sigma \mathcal A \circ \mathcal E(u)
            \end{array}
            \right..
            \label{eq:deeponet_branch}
        \end{equation}
        It represents the coefficients in the basis expansion.
        \item Denote a \textbf{trunk net} by $\tau \in \mathcal N_{n,p}$ as a deep neural network representation of basis functions based on the encoder data.
        \item \textbf{Reconstructor}. The $\tau$-induced \emph{reconstructor} is given by
        \begin{equation}
            \mathcal R : \left\{
            \begin{array}{rl}
                \R^p & \longrightarrow C(U) \\
                \{\alpha_k\}_{k=1}^p & \longmapsto \sum_{k=1}^p \alpha_k (R_\sigma \tau)_k
            \end{array}
            \right. .
            \label{eq:deeponet_reconstructor}
        \end{equation}
    \end{itemize}

    A \textbf{DeepONet} $\mathcal N$ approximates the nonlinear operator $\mathcal G$.
    It is a mapping $\mathcal N : C(D) \to L^2(U)$ of the form $\mathcal N = \mathcal R \circ R_\sigma \mathcal A \circ \mathcal E$, where $\mathcal E : (X, \|\cdot\|_X) \to (\R^m, \|\cdot\|_{\ell^2})$ denotes the encoder $\mathcal E$ given by \cref{eq:deeponet_encoder}, $R_\sigma \mathcal A : (\R^m, \|\cdot\|_{\ell^2}) \to (\R^p, \|\cdot\|_{\ell^2})$ denotes the approximation network, and $\mathcal R : (\R^p, \|\cdot\|_{\ell^2}) \to (L^2(U), \|\cdot\|_{L^2(U)})$ denotes the reconstruction of the form \cref{eq:deeponet_reconstructor}, induced by the trunk net $\tau$.
\end{definition}

In \cite{lanthalerErrorEstimatesDeepONets2022}, the authors study the approximation of $\mathcal G$ by $\mathcal N$.
For the analysis, they consider the following error, measured in $L_\mu^2$, where $\mu$ is associated with $X$,
\begin{equation}
\label{eq:deeponet_error}
    \begin{aligned}
        \hat E^2 := \|\mathcal G - \mathcal N\|_{L_\mu^2}
        := \int_X \int_U |\mathcal G(u)(y) - \mathcal N(u)(y)|^2 \dd y \dd\mu(u).
    \end{aligned}
\end{equation}

\subsection{SDEONet architecture}
\label{sec:sdeonet}

We now describe the construction of the SDEONet architecture, which is inspired by the DeepONet presented above and combined with the chaos representation of~\Cref{sec:wiener chaos}.
For this, recall the polynomial chaos expansion~\cref{eq:sde_chaos} for a stochastic process written as
\begin{equation}
    X_t(\omega) = \sum_{k \geq 0}\sum_{|\alpha|=k} x_\alpha(t) \Psi_\alpha(\omega), \label{eq:sde_params}
\end{equation}
where $\omega \in \Omega$.
One can hence define a nonlinear operator $\mathcal G$ such that $X_t(\omega) = \mathcal G(\{W_s(\omega)\}_{s \in [0,T]}\})(t)$, i.e., it maps the Brownian motion $W$ to the continuous stochastic process $(X_t)_{t \in [0,T]}$ satisfying \cref{eq:sde}.
One can intuitively approximate such an operator $\mathcal G$ with a DeepONet $\mathcal N$.

\begin{definition}[SDE solution operator]\label{def:sde_sol_operator}
    A (nonlinear) operator $\mathcal G : L^2([0,T] \times \Omega) \to L^2([0,T] \times \Omega)$ is said to be a \emph{solution operator} if $\mathcal G(W)$ is a continuous stochastic process satisfying \cref{eq:sde}, where $W = (W_t)_{t \in [0,T]}$ is a Brownian motion.
    Given \cref{assump:uniqueness_sde}, it is the strong solution of the SDE with respect to $W$.
\end{definition}

\begin{example}[Operator $\mathcal G$ of linear SDE]
    Consider the following \emph{linear SDE}
    \begin{equation*}
        \dd X_t = \left(a(t)X_t + b(t)\right)\dd t + h(t)\dd W_t,
    \end{equation*}
    where $a,b$ and $h$ are bounded functions on $[0,T]$.
    Then, for $t \in [0,T]$ we have that
    \begin{align*}
        \mathcal G(W)(t) = \exp\left(\int_0^t a(s)\dd s\right) & \left( X_0 + \int_0^t\exp\left(-\int_0^s a(r)\dd r\right)b(s)\dd s \right. \\
        &\left. + \int_0^t\exp\left(-\int_0^s a(r)\dd r\right)h(s)\dd W_s\right).
    \end{align*}
\end{example}

The ingredients of the SDEONet architecture are defined next.
\begin{definition}[SDEONet]\label{def:sde_deeponet}
    Let $W = (W_t)_{t \in [0,T]}$ be a Brownian motion and $p,m=2^k \in \N$ polynomial chaos discretisation parameters.
    We construct the SDEONet as a composition of the following operators:
    \begin{itemize}
        \item \textbf{Encoder}: The mapping
        \begin{equation}
            \mathcal E^{p,m} :
            \left\{
            \begin{aligned}
                L^2([0,T] \times \Omega) &\longrightarrow L^2(\Omega, \R)^{m}\\
                W &\longmapsto \left(G_i := \int_0^T e_i(t)\dd W_t\right)_{i=0}^{m-1}
            \end{aligned}
            \right.
            \label{eq:sde_deeponet_encoder}
        \end{equation}
        maps the Brownian motion $W$ to the $G_i$,
        for $i = 2^{n-1} + j$, $1 \leq j \leq 2^{n-1}$ and $1 \leq n \leq k$.
        \item \textbf{Approximator}: Given the values $\{G_i\}_{i=0}^{m-1}$, we denote an \emph{approximator} $\mathcal A \in \mathcal N_{m,p}$ as a deep neural network such that its $\sigma$-realisation $R_\sigma \mathcal A = (\widetilde{\Psi_j})_{j=1}^p : L^2(\Omega)^m \to L^2(\Omega, \R)^p$ approximates the \emph{chaos polynomials} $\Psi_{k_j^*}$.
        \item \textbf{Branch net}. Given the \emph{encoder} and \emph{approximator}, the \emph{branch net} is defined by $\beta := \mathcal A \circ \mathcal E$.
        \item \textbf{Trunk net}: We denote a \emph{trunk net} $\tau^p \in \mathcal N_{1,p}$ as a deep neural network such that its $\sigma$-realisation $R_\sigma\tau^p = (\widetilde{x_j})_{j=1}^p : [0,T] \to \R^{p}$ approximates the coefficient functions $x_{k_j^*}$ in \cref{theorem:propagator_system}.
        \item \textbf{Reconstructor}. The $\tau$-induced \emph{reconstructor} is given by,
        \begin{equation}
            \mathcal R_\tau^p : \left\{
            \begin{array}{rl}
                 L^2(\Omega, \R)^p &\longrightarrow L^2([0,T] \times \Omega) \\
                 (\widetilde{\Psi_j})_{j=1}^p & \longmapsto \sum_{j=1}^p \widetilde{x_j}\widetilde{\Psi_j}
            \end{array}.
            \right.
            \label{eq:sde_deeponet_reconstructor}
        \end{equation}
        This is a mapping that approximates $(X_t^{{m,p}^*})_{t \in [0,T]}$.
    \end{itemize}
    A \textbf{SDEONet} $\mathcal N^{m,p}$ approximates the nonlinear operator $\mathcal G$ in \cref{def:sde_sol_operator}.
    It is defined as mapping $\mathcal N^{m,p} : L^2([0,T] \times \Omega) \to L^2([0,T] \times \Omega)$ of the form $\mathcal N^{m,p} = \mathcal R_{\tau^p} \circ \mathcal A \circ \mathcal E$.
\end{definition}
    \section{Convergence analysis}
\label{sec:analysis}

In this section, a complete error analysis for the SDEONet architecture described in the last section is carried out.
Similar to the analysis of DeepONets in~\cite{lanthalerErrorEstimatesDeepONets2022}, the overall error is split into several components that are examined successively, namely truncation, approximation, and reconstruction errors.
We state our main convergence result as a combination of the subsequent estimates in the following~\cref{theorem:main_result}.

Let $\mathcal G : L^2([0,T]\times \Omega) \to L^2([0,T]\times\Omega)$ be a SDE solution operator according to \cref{def:sde_sol_operator} and $\mathcal N^{m,p} $ be a SDEONet as of \cref{def:sde_deeponet} with $m,p \in \N$.
We consider the error measured in the $L^2([0,T]\times\Omega)$-norm defined by
\begin{equation}
    \begin{aligned}
        \hat E^2 &:= \int_0^T \E[|\mathcal G(W)(t) - \mathcal N(W)(t)|^2] \dd t = \int_0^T \E[|X_t - \widetilde X_t^{m,p}|^2]\dd t.
    \end{aligned}
    \label{eq:operator_error}
\end{equation}

\subsection{Auxiliary results}
\label{sec:auxiliary}

To prepare the convergence analysis in the following sections, we first introduce a decomposition of the error.
This requires the finite multi-index set
\begin{equation}
    \mathcal I_{p,k} := \left\{\alpha \in \N^k : |\alpha| := \sum_{i=1}^k \alpha_i \leq p \right\}
    \label{eq:finite_multiindex}
\end{equation}
and the notion of \emph{best $p$-terms} given by
\begin{equation}
    k^* := \argmin_{k = (k_1,\dots,k_p) \in \mathcal I^p} \int_0^T \E[|X_t - \sum_{j=1}^p x_{k_j}(t) \Psi_{k_j}(W)|^2]\dd t.
    \label{eq:best_p_terms}
\end{equation}
Additionally, we define the following set of tuples
\begin{equation*}
    J_p := \left\{(m,n) \in \N^2 : \binom{m+n}{n} \leq p\right\}.
\end{equation*}

The truncation error with respect to the basis $(e_i)_i$ in $L^2([0,T])$ can be bounded by the next result.
\begin{proposition}\label{proposition:decay_rest}
    Let $n \in \N$.
    Then, for all $t \in [0,T]$,
    \begin{equation}
    \label{eq:gi_haar}
        E_{2^{n-1}+j}(t) = \begin{cases}
            \frac{2^{\frac{n-1}{2}}}{\sqrt T}\left(t-T\frac{2j-2}{2^n}\right) & T\frac{2j-2}{2^n} \leq t \leq T\frac{2j-1}{2^n}\\
            \frac{2^{\frac{n-1}{2}}}{\sqrt T}\left(T\frac{2j}{2^n} - t\right) & T\frac{2j-1}{2^n} \leq t \leq T\frac{2j}{2^n}\\
            0 & \emph{else}
        \end{cases},
    \end{equation}
    and, with $E$ defined in \cref{eq:int_basis},
    \begin{equation}
    \label{eq:remainder_basis}
        \sum_{\ell=n+1}^\infty \sum_{j=1}^{2^n} \left(E_{2^{\ell-1}+j}^2(t) + \int_0^t E_{2^{\ell-1}+j}^2(\tau)\dd\tau \right) \leq 2T(1+t)2^{-n}.
    \end{equation}
\end{proposition}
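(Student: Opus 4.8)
The plan is to establish the two claims separately: the explicit formula \cref{eq:gi_haar} by direct integration, and the bound \cref{eq:remainder_basis} by exploiting the disjoint-support structure of the Haar system.

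\textbf{Step 1 (explicit antiderivative).} For \cref{eq:gi_haar} I would integrate the piecewise-constant function $e_{2^{n-1}+j}$ against $\dd s$ from $0$ to $t$. Recall that $e_{2^{n-1}+j}$ equals $+2^{(n-1)/2}/\sqrt T$ on the left subinterval $[T\frac{2j-2}{2^n}, T\frac{2j-1}{2^n}]$, equals $-2^{(n-1)/2}/\sqrt T$ on the right subinterval $[T\frac{2j-1}{2^n}, T\frac{2j}{2^n}]$, and vanishes elsewhere. Hence $E_{2^{n-1}+j}(t)=0$ for $t\le T\frac{2j-2}{2^n}$; on the left subinterval it grows linearly as $\frac{2^{(n-1)/2}}{\sqrt T}(t - T\frac{2j-2}{2^n})$; at the midpoint it attains its maximum $\frac{2^{(n-1)/2}}{\sqrt T}\cdot\frac{T}{2^n}$; on the right subinterval the accumulated value minus the linear decrease simplifies to $\frac{2^{(n-1)/2}}{\sqrt T}(T\frac{2j}{2^n}-t)$; and since the two lobes carry equal mass it returns to $0$ at the right endpoint and remains $0$ thereafter. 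This reproduces exactly the three cases in \cref{eq:gi_haar}.

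\textbf{Step 2 (structural bounds).} Two facts drive the estimate. First, by Step 1 the tent $E_{2^{\ell-1}+j}$ is supported on $[T\frac{2j-2}{2^\ell}, T\frac{2j}{2^\ell}]$, and for fixed $\ell$ these supports are, up to endpoints, disjoint and cover $[0,T]$; consequently, for any fixed $\tau$ at most one index $j$ gives $E_{2^{\ell-1}+j}(\tau)\ne 0$. Second, its peak value is $\frac{2^{(\ell-1)/2}}{\sqrt T}\cdot\frac{T}{2^\ell}=\sqrt T\,2^{-(\ell+1)/2}$, so $E_{2^{\ell-1}+j}(\tau)^2\le T\,2^{-(\ell+1)}$ for every $\tau$. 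Combining the two, at each level $\sum_j E_{2^{\ell-1}+j}(\tau)^2\le T\,2^{-(\ell+1)}$ pointwise, since only one summand is ever nonzero.

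\textbf{Step 3 (assembling the series).} For the pointwise part I bound $\sum_j E_{2^{\ell-1}+j}(t)^2\le T\,2^{-(\ell+1)}$ via Step 2 and sum the geometric series $\sum_{\ell\ge n+1}T\,2^{-(\ell+1)}=\tfrac{T}{2}\,2^{-n}$. For the integral part, the decisive move is to interchange the finite inner sum with $\int_0^t$ and then apply the level bound, giving $\int_0^t\sum_j E_{2^{\ell-1}+j}(\tau)^2\dd\tau\le t\,T\,2^{-(\ell+1)}$; summing over $\ell\ge n+1$ yields $\tfrac{tT}{2}\,2^{-n}$. Adding the two contributions produces $\tfrac{T}{2}(1+t)2^{-n}$, comfortably below the claimed $2T(1+t)2^{-n}$. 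I expect the only genuine obstacle to be precisely this last point: bounding the integral term naively by $\int_0^t\le\int_0^T$ and then summing $2^{\ell-1}$ full integrals discards the disjointness and produces a $T^2$ rather than a $tT$ factor, which need not fit under $2T(1+t)$ for large $T$. Using the disjoint supports to collapse the inner sum to a single pointwise bound \emph{before} integrating is exactly what preserves the correct linear-in-$t$ dependence.
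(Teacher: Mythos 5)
Your proof is correct and follows essentially the same route as the paper: the tent formula by direct integration, the peak bound $E_{2^{\ell-1}+j}^2(\tau)\le T2^{-(\ell+1)}$ combined with the disjointness of supports within each level, and a geometric series over $\ell\ge n+1$. Your bookkeeping is in fact slightly sharper---you obtain $\tfrac{T}{2}(1+t)2^{-n}$, whereas the paper's final ``$=2T(1+t)2^{-n}$'' is really only an upper bound---but the argument is the same.
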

\begin{proof}
    The first expression \cref{eq:gi_haar} follows from the definition of $e_{2^{n-1}+j}$.
    Concerning the second expression \cref{eq:remainder_basis}, first, note that $\max_{t \in [0,T]} E_{2^{n-1}+j}^2(t) = T2^{-(n+1)}$ and that $E_{2^{n-1}+j}(t) \neq 0$ only for $t \in \left[T\frac{2j-2}{2^n}, T\frac{2j}{2^n}\right]$. Then,
    \begin{align*}
        \sum_{\ell=n+1}^\infty \sum_{j=1}^{2^n} \left(E_{2^{\ell-1}+j}^2(t) + \int_0^t E_{2^{\ell-1}+j}^2(\tau)\dd\tau \right) & \leq \sum_{\ell=n+1}^\infty \left(T2^{-(\ell+1)} + tT 2^{-(\ell+1)}\right)\\
        & = 2T(1+t)2^{-n},
    \end{align*}
    since it is a geometric series.
\end{proof}

The subsequent result permits the bounding of \cref{eq:operator_error} by three additional terms through the application of the triangle inequality.
\begin{lemma}[Decomposition of the error]\label{lemma:decomposition_error}
    Let $p,m \in \N$, $\mathcal G$ be a SDE solution operator according to \cref{def:sde_sol_operator} and $\mathcal N^{p,m}$ a \emph{SDEONet}.
    Then, the error $\hat E$ \cref{eq:operator_error} can be decomposed as
    \begin{equation}
    \label{eq:full_error}
        \hat E \leq \hat E_\text{Trunc} + \hat E_\text{Approx} + \hat E_\text{Recon},
    \end{equation}
    with
    \begin{subequations}
    \begin{align}
        \hat E_\text{Trunc} &:= \left(\int_0^T \E[|X_t - \sum_{j=1}^p x_{k_j^*}(t)\Psi_{k_j^*}|^2]\dd t\right)^{1/2},\label{eq:trunc_error}\\
        \hat E_\text{Approx} &:= \left(\int_0^T \E[|\sum_{j=1}^p x_{k_j^*}(t)\Psi_{k_j^*} - \sum_{j=1}^p x_{k_j^*}(t)\widetilde{\Psi_j}|^2]\dd t\right)^{1/2},\label{eq:approx_error}\\
        \hat E_\text{Recon} &:= \left(\int_0^T \E[|\sum_{j=1}^p x_{k_j^*}(t)\widetilde{\Psi_j} - \sum_{j=1}^p \widetilde{x_j}(t)\widetilde{\Psi_j}|^2]\dd t\right)^{1/2}.\label{eq:reconstruct_error}
    \end{align}
    \end{subequations}
\end{lemma}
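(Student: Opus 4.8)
The plan is to recognise the whole error \cref{eq:operator_error} as a single norm on the Hilbert space $L^2([0,T]\times\Omega)$ and to split it by inserting two intermediate processes of our choosing, after which the claimed bound is nothing but the triangle inequality.

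First I would unfold the definition of the SDEONet output. By \cref{def:sde_deeponet}, the reconstructor \cref{eq:sde_deeponet_reconstructor} yields $\mathcal N^{m,p}(W)(t) = \widetilde X_t^{m,p} = \sum_{j=1}^p \widetilde{x_j}(t)\,\widetilde{\Psi_j}$, where the $\widetilde{x_j} = (R_\sigma \tau^p)_j$ are the trunk-net outputs and the $\widetilde{\Psi_j} = (R_\sigma \mathcal A \circ \mathcal E)_j$ are the approximated chaos polynomials produced by the branch net. Hence the integrand in \cref{eq:operator_error} is exactly $|X_t - \sum_{j=1}^p \widetilde{x_j}(t)\widetilde{\Psi_j}|^2$, and the target quantity $\hat E$ is the $L^2([0,T]\times\Omega)$-norm of $X_t - \widetilde X_t^{m,p}$.

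Next I would introduce the two natural intermediate processes obtained by replacing the exact objects by their network counterparts one factor at a time: the best $p$-term truncation $\sum_{j=1}^p x_{k_j^*}(t)\Psi_{k_j^*}$ (exact coefficients and exact chaos polynomials at the optimal indices $k^*$ from \cref{eq:best_p_terms}), and the mixed process $\sum_{j=1}^p x_{k_j^*}(t)\widetilde{\Psi_j}$ (exact coefficients, approximated chaos polynomials). Writing $\|\cdot\|$ for the norm $\big(\int_0^T \E[|\cdot|^2]\dd t\big)^{1/2}$, the difference $X_t - \widetilde X_t^{m,p}$ then telescopes as a sum of three differences, and each of these is precisely the quantity appearing inside $\hat E_\text{Trunc}$, $\hat E_\text{Approx}$ and $\hat E_\text{Recon}$, respectively.

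The bound \cref{eq:full_error} follows by applying $\|a+b+c\| \le \|a\|+\|b\|+\|c\|$ in $L^2([0,T]\times\Omega)$. The only point requiring a moment's care, and it is not a genuine obstacle, is to check that each intermediate process actually lies in $L^2([0,T]\times\Omega)$, so that the triangle inequality is legitimate: the best $p$-term truncation is a finite sum of square-integrable coefficients times normalised chaos polynomials $\Psi_{k_j^*}\in L^2(\Omega)$, while the surrogates $\widetilde{x_j}$ and $\widetilde{\Psi_j}$ are realisations of finite ReLU networks and are therefore square-integrable on the relevant bounded (resp.\ Gaussian) domains. Granting this, the decomposition is immediate.
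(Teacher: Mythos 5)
Your proposal is correct and matches the paper's intended argument exactly: the paper states this lemma without a written-out proof, noting only that it follows ``through the application of the triangle inequality,'' which is precisely your telescoping of $X_t - \widetilde X_t^{m,p}$ via the two intermediate processes $\sum_{j=1}^p x_{k_j^*}(t)\Psi_{k_j^*}$ and $\sum_{j=1}^p x_{k_j^*}(t)\widetilde{\Psi_j}$ followed by the triangle inequality in $L^2([0,T]\times\Omega)$. Your additional check that the intermediate processes are square-integrable is a sensible (if routine) piece of diligence the paper leaves implicit.
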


\subsection{Truncation error}
\label{sec:truncation}
We first estimate the truncation error, which results from only using a finite number of basis elements and polynomials in the Wiener chaos expansion.

\begin{lemma}[{Upper bound of truncation error}]\label{lemma:ub_trunc_error}
    With \cref{assump:uniqueness_sde} and let $\mu,\sigma$ satisfy the assumptions of \cref{theorem:l2_error_truncation}. 
    Then, the \emph{truncation error} \cref{eq:trunc_error} satisfies
    \begin{equation}
        \hat E_\text{Trunc} \leq \min_{m,n \in J_p}\left( (1+x_0^2)\left(\int_0^T C(t, K)\left(\frac{1}{(m+1)!} + \frac{2T(1+t)}{n}\right) \dd t\right)\right)^{1/2}, 
    \end{equation}
    with $C(t,K)$ defined in \cref{theorem:l2_error_truncation}.
\end{lemma}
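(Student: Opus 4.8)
The plan is to exploit that, by the definition \cref{eq:best_p_terms} of the best $p$-terms $k^*$, the quantity $\hat E_\text{Trunc}$ in \cref{eq:trunc_error} is the \emph{minimal} $L^2([0,T]\times\Omega)$ error achievable by any selection of $p$ chaos polynomials carrying the true coefficients $x_\alpha(t)$. Hence it suffices to bound it from above by the error of one convenient structured selection and then minimise that bound over the admissible structures collected in $J_p$. First I would record that, since $(e_i)_i$ is orthonormal in $L^2([0,T])$, the random variables $G_i=\int_0^T e_i\,\dd W$ are i.i.d.\ standard Gaussians, so the products $\Psi_\alpha=\prod_i H_{\alpha_i}(G_i)$ of normalised Hermite polynomials form an orthonormal family in $L^2(\Omega)$. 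Combined with the full expansion \cref{eq:sde_chaos}, this gives for every finite index set $S$ the Pythagorean identity
\[
\E\left[\Bigl|X_t-\sum_{\alpha\in S}x_\alpha(t)\Psi_\alpha\Bigr|^2\right]=\sum_{\alpha\notin S}x_\alpha(t)^2 ,
\]
which is monotonically decreasing as $S$ grows. I thus obtain the two facts I need: enlarging an index set never increases the error, and the best $p$-term error is bounded above by the error of any selection of at most $p$ indices.

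Next I would specialise to $S=\mathcal I_{m,n}$ from \cref{eq:finite_multiindex} for an arbitrary $(m,n)\in J_p$. Its cardinality is $\binom{m+n}{n}\le p$, so choosing any superset of exactly $p$ chaos indices (whose extra terms, by the monotonicity above, only decrease the error) yields
\[
\hat E_\text{Trunc}^2\le\int_0^T\E\bigl[|X_t-X_t^{m,n}|^2\bigr]\,\dd t ,
\]
where $X_t^{m,n}$ is the truncation \cref{eq:chaos_expansion_truncatation} keeping polynomial degree $\le m$ and the first $n$ Haar basis elements. I would then invoke \cref{theorem:l2_error_truncation} (reading its parameters $(p,k)$ as $(m,n)$) to get, pointwise in $t$,
\[
\E\bigl[|X_t-X_t^{m,n}|^2\bigr]\le C(t,K)(1+x_0^2)\left(\frac{1}{(m+1)!}+\sum_{\ell=n+1}^\infty\Bigl(E_\ell(t)^2+\int_0^t E_\ell^2(\tau)\,\dd\tau\Bigr)\right),
\]
and control the basis tail with \cref{proposition:decay_rest}. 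Here I would use that the Haar basis groups into dyadic levels, so retaining $n$ basis functions corresponds to a level $\log_2 n$ and the remainder is bounded by $2T(1+t)\,2^{-\log_2 n}=2T(1+t)/n$.

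Finally I would integrate the pointwise estimate over $[0,T]$, factor out $(1+x_0^2)$, take the square root, and minimise over $(m,n)\in J_p$; since each admissible pair furnishes a valid upper bound, so does their minimum, which is exactly the claimed inequality.

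The step I expect to be the main obstacle is the passage from the unstructured best-$p$-term minimisation to the structured dyadic truncation: one must argue cleanly that restricting to the index sets $\mathcal I_{m,n}$ (rather than arbitrary $p$-subsets) still furnishes a valid \emph{upper} bound, which is precisely where the orthonormality/monotonicity identity is essential, and that the dyadic structure of the Haar basis permits rewriting the level-tail $2^{-\log_2 n}$ as $1/n$ in terms of the number $n$ of retained basis functions, which tacitly requires $n$ to be (or be taken as) a power of two.
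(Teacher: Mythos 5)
Your proposal is correct and follows essentially the same route as the paper's proof: bound the best-$p$-term error by the error of the structured truncation over $\mathcal I_{m,n}$ for $(m,n)\in J_p$, apply \cref{theorem:l2_error_truncation}, control the Haar tail with \cref{proposition:decay_rest}, then integrate and minimise over $J_p$. The only difference is one of detail: you make explicit (via orthonormality and monotonicity of the projection error) the step the paper compresses into ``the result follows by definition of $k^*$'', and you correctly flag the tacit power-of-two assumption on $n$ hidden in rewriting the dyadic level tail $2^{-k}$ as $1/n$, which the paper's proof (using $k=\lfloor\log_2 n\rfloor$) glosses over.
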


Note that the above lemma shows that the truncation error decays factorially fast in the number of polynomial chaos terms and linearly in the number of basis elements.

\subsection{Approximation error}
\label{sec:approximation error}
The second term in~\cref{eq:full_error} is the approximation error that comes from the approximation of the chaos polynomials.
This term is more involved and requires the use of Malliavin calculus.
The approach is to explicitly introduce the $L^2$ error of the polynomial chaos and show that a neural network can indeed approximate the Hermite polynomials.

For the upper bound of the approximation error, we recall the following results.

\begin{theorem}[{\cite[Theorem~2.9, p. 289]{Karatzas1998}}]\label{theorem:square_int_process}
    Suppose that \cref{assump:uniqueness_sde} is fulfilled. Then, there exists a continuous adapted process $X$, which is a strong solution of \cref{eq:sde} with respect to $W$ with initial condition $x_0$. Moreover, this process is square-integrable: for every $T > 0$ there exists a constant $C := C(K,T)$ such that
    \begin{equation*}
        \E[\|X_t\|_2^2] \leq C(1+\E[\|x_0\|_2^2])\exp(Ct)\quad\text{ for } 0 \leq t \leq T.
    \end{equation*}
\end{theorem}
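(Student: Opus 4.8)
The plan is to establish existence, uniqueness, and the moment bound by the classical Picard iteration together with Grönwall's inequality; since \cref{assump:uniqueness_sde} supplies both linear growth and global Lipschitz continuity, this is precisely the regime in which the fixed-point argument applies. First I would set up the iteration $X_t^{(0)} := x_0$ and, for $n \ge 0$,
\begin{equation*}
    X_t^{(n+1)} := x_0 + \int_0^t \mu(s, X_s^{(n)})\dd s + \int_0^t \sigma(s, X_s^{(n)})\dd W_s.
\end{equation*}
Using the linear growth bound one checks by induction that each iterate is continuous, adapted, and satisfies $\sup_{t \le T}\E[\|X_t^{(n)}\|_2^2] < \infty$, so that every stochastic integral is well defined and the It\=o isometry is available at each step.

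Next I would show the iterates are Cauchy. Writing $\delta_t^{(n)} := \E[\sup_{s \le t}\|X_s^{(n+1)} - X_s^{(n)}\|_2^2]$ and estimating $X_t^{(n+1)} - X_t^{(n)}$ via $(a+b)^2 \le 2(a^2+b^2)$, the Cauchy--Schwarz inequality on the drift term, Doob's maximal inequality combined with the It\=o isometry on the diffusion term, and finally the Lipschitz continuity of $\mu$ and $\sigma$, one arrives at a recursion $\delta_t^{(n)} \le C_T \int_0^t \delta_s^{(n-1)}\dd s$. Iterating gives $\delta_T^{(n)} \le M (C_T T)^n/n!$, which is summable, so by Borel--Cantelli the sequence converges uniformly on $[0,T]$ almost surely to a continuous adapted process $X$; passing to the limit in the integral equation shows $X$ solves \cref{eq:sde}. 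Uniqueness follows by applying the same estimate to the difference of two solutions and invoking Grönwall's inequality.

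For the moment bound I would work directly with the integral form and apply $(a+b+c)^2 \le 3(a^2+b^2+c^2)$, Cauchy--Schwarz to the drift integral, the It\=o isometry $\E[\|\int_0^t \sigma(s,X_s)\dd W_s\|_2^2] = \E[\int_0^t \|\sigma(s,X_s)\|_{\text{Fro}}^2\dd s]$ to the diffusion integral, and the linear growth estimate $\|\mu(s,x)\|_2^2 \le 2K^2(1+\|x\|_2^2)$ and likewise for $\sigma$. This yields
\begin{equation*}
    \E[\|X_t\|_2^2] \le 3\E[\|x_0\|_2^2] + \tilde C_T \int_0^t \bigl(1 + \E[\|X_s\|_2^2]\bigr)\dd s,
\end{equation*}
and Grönwall's inequality then gives $\E[\|X_t\|_2^2] \le C(1+\E[\|x_0\|_2^2])\exp(Ct)$ with $C = C(K,T)$, as claimed.

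I expect the principal difficulty to be the a priori integrability required to justify the It\=o isometry in the final step: the asserted bound presupposes $X \in L^2$, yet that is what the isometry is meant to establish. The clean resolution is to prove the growth bound uniformly in $n$ for the Picard iterates first (the right-hand side there always involves the previous, already square-integrable iterate) and then pass to the limit by Fatou's lemma; alternatively one localises with the stopping times $\tau_N := \inf\{t : \|X_t\|_2 \ge N\}$, derives the estimate on $[0, t \wedge \tau_N]$, and sends $N \to \infty$ by monotone convergence. A secondary technical point is the appeal to Doob's maximal inequality (or Burkholder--Davis--Gundy) needed to upgrade $L^2$ convergence to almost-sure uniform convergence, which is what supplies the continuity of the limiting process.
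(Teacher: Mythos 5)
The paper does not prove \cref{theorem:square_int_process} at all: it is imported verbatim from Karatzas--Shreve (Theorem~2.9, p.~289), so there is no internal proof to compare against, and the relevant benchmark is the classical argument in that reference. Your proposal correctly reconstructs exactly that argument --- Picard iteration with the Lipschitz/linear-growth hypotheses of \cref{assump:uniqueness_sde}, Doob/It\=o-isometry estimates and the $(C_T T)^n/n!$ recursion for existence and continuity, Gr\"onwall for uniqueness and for the bound $\E[\|X_t\|_2^2] \leq C(1+\E[\|x_0\|_2^2])e^{Ct}$ --- and your handling of the one genuine subtlety, the a priori square-integrability needed to invoke the It\=o isometry (uniform moment bounds on the iterates plus Fatou, or localization by stopping times), is precisely the standard resolution, so the proof is sound.
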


\begin{proposition}[{\cite[Proposition~4.1]{huschtoAsymptoticErrorChaos2019}}]\label{proposition:malliavin_bound}
    Under the conditions of \cref{theorem:l2_error_truncation}, we obtain the estimate
    \begin{equation*}
        \E[(D_{s_1,\dots,s_n}^n X_t)^2] \leq C^n (1+x_0^2)\exp(Cnt),
    \end{equation*}
    where $C$ is the same as in \cref{theorem:square_int_process}.
\end{proposition}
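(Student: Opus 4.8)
The plan is to exploit the fact that iterated Malliavin derivatives of a strong SDE solution themselves solve linear SDEs, and to close a second-moment estimate by Gronwall's inequality, arguing inductively on the order $n$. For $n=1$ and $s \le t$, the first Malliavin derivative $Y_t := D_s X_t$ satisfies the linear SDE
\[
    Y_t = \sigma(s, X_s) + \int_s^t \partial_x\mu(r,X_r)\,Y_r\,\dd r + \int_s^t \partial_x\sigma(r,X_r)\,Y_r\,\dd W_r,
\]
with $D_s X_t = 0$ for $s > t$. Squaring, taking expectations, bounding the $\dd r$-integral by Jensen's inequality and the $\dd W$-integral by the It\^o isometry, and using that the global Lipschitz condition of \cref{assump:uniqueness_sde} forces $|\partial_x\mu|, |\partial_x\sigma| \le K$, one obtains $\E[Y_t^2] \le 3\,\E[\sigma(s,X_s)^2] + C\int_s^t \E[Y_r^2]\,\dd r$. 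The linear-growth assumption together with the square-integrability of \cref{theorem:square_int_process} controls the inhomogeneous term by $C(1+x_0^2)\exp(Cs)$, and Gronwall's inequality then yields the claim for $n=1$.

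For the inductive step I would assume the estimate up to order $n-1$ and differentiate the SDE $n$ times. Writing $U_t := D^n_{s_1,\dots,s_n} X_t$ and $s^\ast := \max_i s_i$, the Leibniz/Fa\`a di Bruno formula for Malliavin derivatives of the compositions $\mu(r,X_r)$ and $\sigma(r,X_r)$ produces, for $s^\ast \le t$, a linear SDE of the form
\[
    U_t = \Xi + \int_{s^\ast}^t \partial_x\mu(r,X_r)\,U_r\,\dd r + \int_{s^\ast}^t \partial_x\sigma(r,X_r)\,U_r\,\dd W_r + \int_{s^\ast}^t R^\mu_r\,\dd r + \int_{s^\ast}^t R^\sigma_r\,\dd W_r,
\]
where $\Xi$ is a lower-order Malliavin derivative of $\sigma$ evaluated at $s^\ast$, and $R^\mu_r, R^\sigma_r$ are finite sums of terms $\partial_x^k\mu(r,X_r)\prod_\ell D^{n_\ell}_{\cdot}X_r$ with $k \ge 2$, $\sum_\ell n_\ell = n$ and each $n_\ell < n$ (and likewise for $\sigma$). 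The homogeneous part has exactly the same structure as in the base case, so it again produces a $C\int_{s^\ast}^t \E[U_r^2]\,\dd r$ term amenable to Gronwall.

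The crux is bounding the nonlinear remainders $\Xi$, $R^\mu_r$, $R^\sigma_r$, since these are products of strictly lower-order derivatives. To control $\E[(R^\mu_r)^2]$ through the inductive hypothesis I would split each product by H\"older's inequality, which requires estimates in $L^p$ for all $p \ge 2$ rather than merely $L^2$; these stronger bounds follow from the very same Gronwall argument with the Burkholder--Davis--Gundy inequality replacing the It\^o isometry, and the derivatives $\partial_x^k\mu, \partial_x^k\sigma$ with $k \ge 2$ are controlled using the full regularity hypotheses of \cref{theorem:l2_error_truncation}. Once every remainder contribution is bounded by an expression of the form $C^n(1+x_0^2)\exp(Cnr)$, collecting terms yields $\E[U_t^2] \le g(t) + C\int_{s^\ast}^t \E[U_r^2]\,\dd r$, and a final Gronwall step gives the asserted $C^n(1+x_0^2)\exp(Cnt)$. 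I expect the main obstacle to be precisely this bookkeeping: upgrading to uniform $L^p$ control in order to tame the product terms, and tracking how the constant compounds across the $n$ levels of the induction so that it collapses exactly to the $C^n$ and $\exp(Cnt)$ of the statement, with $C$ matching the constant of \cref{theorem:square_int_process}.
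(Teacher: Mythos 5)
You should first note that the paper contains no proof of this statement: \cref{proposition:malliavin_bound} is imported verbatim from \cite[Proposition~4.1]{huschtoAsymptoticErrorChaos2019} and used as a black box (it feeds into the proof of \cref{lemma:decay_squared_coef}), so there is no internal argument to compare yours against. Your plan --- iterated Malliavin derivatives of the strong solution solve linear SDEs, second moments are closed via Jensen, the It\^o isometry and Gronwall, and one inducts on the order $n$ --- is the standard route to exactly this kind of bound and is in substance the argument behind the cited result, not a genuinely different one. Your base case is complete and correct: under the hypotheses of \cref{theorem:l2_error_truncation} one indeed has $|\partial_x\mu|,|\partial_x\sigma|\le K$, and the inhomogeneous term $\E[\sigma(s,X_s)^2]$ is controlled through linear growth and \cref{theorem:square_int_process}.

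The inductive step, however, is a plan rather than a proof, and the part you defer is the mathematical core. Because the Fa\`a di Bruno remainders are products of strictly lower-order derivatives, estimating them in $L^2$ consumes $L^4, L^6,\dots$ bounds on orders $<n$; hence the induction hypothesis must be formulated from the outset as an $L^{2q}$ bound, uniform in $(s_1,\dots,s_n)$ and holding for all $q\ge 1$ (with Burkholder--Davis--Gundy replacing the It\^o isometry), rather than an $L^2$ bound that is ``upgraded'' when needed --- an $L^2$-only induction cannot close. That restructuring is routine but has to be set up explicitly, and it is where the claimed multiplicative structure $C^n e^{Cnt}$ is actually earned: one must check that the combinatorial factors (the number of Leibniz/Fa\`a di Bruno terms, the H\"older splittings, and the $q$-dependence of the BDG constant) can be absorbed so that the constant compounds geometrically across the $n$ levels. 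Strictly speaking, the resulting $C$ must also be enlarged relative to the constant of \cref{theorem:square_int_process}; the proposition's phrase ``the same $C$'' is a convention, not something your Gronwall argument delivers for free. In short: right approach, correct base case, but the $L^{2q}$ bookkeeping and constant tracking that constitute the real content of the inductive step are acknowledged rather than executed.
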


\begin{theorem}[{\cite{Stroock1987}}]\label{theorem:kernel_stroock}
    Let $F \in L^2(\Omega)$. Suppose that $F$ is \emph{infinitely Malliavin differentiable} and that for every $k \geq 0$ the \emph{$k$-th Malliavin derivative $D^k F$ of $F$ is square-integrable}. Then the symmetric functions $f_n$ in the chaos decomposition
    \begin{equation*}
        F = \sum_{n=0}^\infty I_n(f_n)
    \end{equation*}
    can be computed by
    \begin{equation*}
        f_n = \frac{1}{n!} \E[D^n F].
    \end{equation*}
\end{theorem}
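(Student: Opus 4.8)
The plan is to derive Stroock's formula from two ingredients: the explicit action of the Malliavin derivative on multiple stochastic integrals, and the vanishing expectation of such integrals of positive order.

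First I would record the basic differentiation rule for multiple integrals. For a symmetric kernel $f \in L^2([0,T]^n)$ one has $D_s I_n(f) = n\, I_{n-1}(f(\cdot, s))$, where $f(\cdot, s)$ denotes $f$ with its last argument frozen at $s$. Iterating this identity $k$ times (for $k \le n$) gives
\[
    D^k_{s_1,\dots,s_k} I_n(f) = \frac{n!}{(n-k)!}\, I_{n-k}(f(\cdot, s_1,\dots,s_k)),
\]
and $D^k I_n(f) = 0$ whenever $k > n$. These follow from the defining relation between iterated integration and Malliavin differentiation together with the symmetry of $f$.

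Next I would apply this termwise to the chaos decomposition $F = \sum_{n \ge 0} I_n(f_n)$. The standing hypotheses---that $F$ is infinitely Malliavin differentiable and every $D^k F$ is square-integrable---place $F$ in the intersection of all Malliavin--Sobolev spaces, which is exactly what licenses differentiating the series term by term. This yields, with convergence in $L^2(\Omega \times [0,T]^k)$,
\[
    D^k F = \sum_{n \ge k} \frac{n!}{(n-k)!}\, I_{n-k}(f_n(\cdot, s_1,\dots,s_k)).
\]
Taking expectations and using $\E[I_m(g)] = 0$ for every $m \ge 1$ together with $I_0(c) = c$, every term with $n > k$ drops out, leaving $\E[D^k F] = k!\, f_k$. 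Dividing by $k!$ gives the claimed identity $f_k = \frac{1}{k!}\E[D^k F]$.

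The main obstacle is the rigorous justification of the two interchanges: differentiating the infinite chaos sum term by term, and swapping expectation with the infinite sum. Both are precisely controlled by the square-integrability of all Malliavin derivatives assumed in the statement; these hypotheses guarantee that the formal series for $D^k F$ converges in $L^2(\Omega \times [0,T]^k)$ and that the $n = k$ term is the only surviving contributor after taking the mean. Everything else reduces to the linearity of $D$ and the orthogonality and zero-mean properties of multiple stochastic integrals.
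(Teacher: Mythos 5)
The paper offers no proof of this theorem to compare against: it is imported verbatim from \cite{Stroock1987} and used as a black box in the proof of \cref{lemma:decay_squared_coef}. Judged on its own, your argument is the standard proof of Stroock's formula and it is correct: the commutation rule $D_s I_n(f) = n\, I_{n-1}(f(\cdot,s))$ for symmetric kernels, iterated to $D^k_{s_1,\dots,s_k} I_n(f) = \frac{n!}{(n-k)!} I_{n-k}(f(\cdot,s_1,\dots,s_k))$ (and $0$ for $k>n$); termwise differentiation of the chaos series, licensed precisely because $F\in\mathbb{D}^{k,2}$ for every $k$ (equivalently $\sum_n n^k\, n!\,\|f_n\|^2<\infty$, which gives convergence of the differentiated series in $L^2(\Omega\times[0,T]^k)$); and the zero-mean property of $I_m$ for $m\ge 1$, which kills every term except $n=k$ after taking expectations, leaving $\E[D^k F]=k!\,f_k$. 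The expectation--sum interchange you flag is indeed harmless, since taking expectations is continuous with respect to $L^2$ convergence.

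One point you should make explicit: your differentiation rule, and hence the factor $1/n!$ in the conclusion, presupposes the normalisation in which $I_n$ is the multiple Wiener--It\^o integral over the full cube $[0,T]^n$, with $\E[I_n(f)^2]=n!\,\|f\|_{L^2([0,T]^n)}^2$. The paper's own definition \cref{eq:multiple_stochastic_integrals} is instead the iterated It\^o integral over the simplex; for that normalisation the rule is $D_s I_n(f)=I_{n-1}(f(\cdot,s))$ and Stroock's formula would read $f_n=\E[D^n F]$ with no factorial. This mismatch is internal to the paper (its application of the theorem in the proof of \cref{lemma:decay_squared_coef} silently adopts the cube normalisation via $\E[I_\ell(\xi_\ell)^2]=\ell!\,\langle\xi_\ell,\xi_\ell\rangle_{L^2([0,T]^\ell)}$), so your proof is consistent with the statement as written and with how it is used; just declare the normalisation at the outset so the constant $1/n!$ is unambiguous.
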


These results enable to bound the coefficient functions of the PCE \cref{eq:sde_params}.
\begin{lemma}\label{lemma:decay_squared_coef}
    Let \cref{assump:uniqueness_sde} be satisfied, and let $\mu,\sigma$ satisfy the assumptions of \cref{theorem:l2_error_truncation}.
    Consider the Wiener chaos expansion~\cref{eq:sde_params} of $X_t$
    \begin{equation*}
        X_t = \sum_{n=0}^\infty \sum_{|\alpha|=n} x_\alpha(t) \Psi_\alpha.
    \end{equation*}
    Then, for $m \in \mathbb N$ we have
    \begin{equation*}
        \sum_{\ell = 0}^m \sum_{|\alpha|=\ell} x_\alpha(t)^2 \leq (1+x_0^2)\left(e^{Ct e^{Ct}} - \frac{(Cte^{Ct})^{m+1}}{(m+1)!}\right),
    \end{equation*}
    where $t \in [0, T]$, $C = C(K, T)$ is a constant that depends only on $T$ and the regularity of $\mu$ and $\sigma$.
\end{lemma}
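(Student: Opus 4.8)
The plan is to express the sum of squared coefficients in each fixed chaos order through the Malliavin kernels of $X_t$ and then bound those kernels uniformly via \cref{proposition:malliavin_bound}. The first move is to pass from the Hermite-polynomial representation in \cref{eq:sde_chaos} to the equivalent multiple-stochastic-integral representation $X_t = \sum_{n=0}^\infty I_n(f_n)$ with symmetric kernels $f_n \in L^2([0,T]^n)$. Since $\{\Psi_\alpha : |\alpha| = n\}$ is an orthonormal basis of the $n$-th Wiener chaos $\mathcal H_n$, while $I_n$ maps symmetric $L^2$-kernels onto $\mathcal H_n$ with the isometry $\E[I_n(f_n)^2] = n!\,\|f_n\|_{L^2([0,T]^n)}^2$, orthonormality of the $\Psi_\alpha$ identifies the order-$n$ component of $X_t$ with $I_n(f_n)$ and yields
\[
    \sum_{|\alpha|=n} x_\alpha(t)^2 = \E\bigl[I_n(f_n)^2\bigr] = n!\,\|f_n\|_{L^2([0,T]^n)}^2 .
\]

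Next I would identify the kernels via Stroock's formula (\cref{theorem:kernel_stroock}): under the smoothness hypotheses on $\mu,\sigma$ from \cref{theorem:l2_error_truncation}, the solution $X_t$ is infinitely Malliavin differentiable with square-integrable derivatives, so $f_n = \tfrac{1}{n!}\E[D^n X_t]$. Substituting this and applying Jensen's inequality pointwise, $(\E[D^n_{s_1,\dots,s_n}X_t])^2 \leq \E[(D^n_{s_1,\dots,s_n}X_t)^2]$, gives
\[
    \sum_{|\alpha|=n} x_\alpha(t)^2 \leq \frac{1}{n!}\int_{[0,T]^n}\E\bigl[(D^n_{s_1,\dots,s_n}X_t)^2\bigr]\,\dd s_1\cdots\dd s_n .
\]
Because $X_t$ is $\mathcal F_t$-measurable, $D_s X_t = 0$ whenever $s>t$, so the integrand is supported on $[0,t]^n$. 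Inserting the uniform bound $\E[(D^n_{s_1,\dots,s_n}X_t)^2]\leq C^n(1+x_0^2)e^{Cnt}$ from \cref{proposition:malliavin_bound} and integrating over the cube $[0,t]^n$ (volume $t^n$) produces
\[
    \sum_{|\alpha|=n} x_\alpha(t)^2 \leq \frac{1+x_0^2}{n!}\bigl(Ct\,e^{Ct}\bigr)^n .
\]

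Finally, summing over $\ell=0,\dots,m$ recognises the truncated exponential series $\sum_{\ell=0}^m \tfrac{(Cte^{Ct})^\ell}{\ell!}$, which equals $e^{Cte^{Ct}}$ minus its tail; dropping all tail terms beyond the leading one $\tfrac{(Cte^{Ct})^{m+1}}{(m+1)!}$ (each nonnegative) gives precisely the stated bound. I expect the main obstacle to lie in the first two steps: correctly matching the two chaos representations with the combinatorial constant $n!$ in the isometry, and confirming that the regularity assumptions indeed place $X_t$ in the domain of Stroock's formula so that $f_n = \tfrac{1}{n!}\E[D^n X_t]$ is legitimate. The remaining ingredients---Jensen, the restriction of the support to $[0,t]$, and the elementary series manipulation---are routine.
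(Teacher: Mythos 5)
Your proposal is correct and follows essentially the same route as the paper's proof: passing to the multiple-integral representation, identifying the kernels via Stroock's formula (\cref{theorem:kernel_stroock}), bounding them with \cref{proposition:malliavin_bound}, and closing with the truncated exponential series. The only cosmetic differences are that you integrate over the cube $[0,t]^n$ (keeping the $1/n!$ from Stroock) where the paper integrates over the ordered simplex, and that you make explicit the Jensen step $(\E[D^nX_t])^2 \leq \E[(D^nX_t)^2]$, which the paper absorbs into an equality sign.
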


We can now bound the approximation error.
\begin{lemma}[{Upper bound of approximation error}]\label{lemma:ub_approx_error}
    Let \cref{assump:uniqueness_sde} be satisfied and let $\mu,\sigma$ satisfy the assumptions of \cref{theorem:l2_error_truncation}.
    Then, the \emph{approximation error}~\cref{eq:approx_error} satisfies
    \begin{align*}
        \hat E_\text{Approx}^2 &\leq \min_{(m,n) \in J_p} \E\left[\sum_{j=1}^p |\Psi_{k_j^*}-\widetilde{\Psi_j}|^2)\right]\\ &\times (1+x_0^2)\left(A(K,T) - \frac{1}{(m+1)!}\frac{T(CT)^{2(m+1)}}{2(m+1)+1}\left(1+\frac{1}{CT}\right)^{m+1}\right),
    \end{align*}
    with constants $A(K,T):=\int_0^T e^{Cte^{Ct}} \dd t$ and $C=C(K,T)$.
\end{lemma}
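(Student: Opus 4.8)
The plan is to separate the deterministic chaos coefficients from the neural-network approximation of the Hermite polynomials by a pointwise Cauchy--Schwarz inequality, and then to control the resulting coefficient energy via \cref{lemma:decay_squared_coef}. First, fixing $t \in [0,T]$ and $\omega \in \Omega$ and applying Cauchy--Schwarz to the finite sum inside \cref{eq:approx_error} gives
\begin{equation*}
\left|\sum_{j=1}^p x_{k_j^*}(t)\bigl(\Psi_{k_j^*}-\widetilde{\Psi_j}\bigr)\right|^2 \leq \left(\sum_{j=1}^p x_{k_j^*}(t)^2\right)\left(\sum_{j=1}^p \bigl|\Psi_{k_j^*}-\widetilde{\Psi_j}\bigr|^2\right).
\end{equation*}
Since the coefficients $x_{k_j^*}(t)$ are deterministic, taking the expectation pulls the first factor out of $\E[\cdot]$, and integrating in $t$ yields
\begin{equation*}
\hat E_\text{Approx}^2 \leq \E\left[\sum_{j=1}^p \bigl|\Psi_{k_j^*}-\widetilde{\Psi_j}\bigr|^2\right]\int_0^T \sum_{j=1}^p x_{k_j^*}(t)^2 \dd t,
\end{equation*}
which already exhibits the product structure of the claimed bound; it remains to estimate $\int_0^T\sum_{j=1}^p x_{k_j^*}(t)^2\dd t$.

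Second, I would bound the coefficient energy for each admissible pair $(m,n)\in J_p$. The constraint $\binom{m+n}{n}\leq p$ means that the multi-index set $\mathcal I_{m,n}=\{\alpha\in\N^n:|\alpha|\leq m\}$ has at most $p$ elements, so the $p$ selected terms can be taken to be supported in chaos order $\leq m$; consequently $\sum_{j=1}^p x_{k_j^*}(t)^2$ is controlled by the order-$\leq m$ partial-sum energy, to which \cref{lemma:decay_squared_coef} applies:
\begin{equation*}
\sum_{j=1}^p x_{k_j^*}(t)^2 \leq \sum_{\ell=0}^m\sum_{|\alpha|=\ell}x_\alpha(t)^2 \leq (1+x_0^2)\left(e^{Cte^{Ct}} - \frac{(Cte^{Ct})^{m+1}}{(m+1)!}\right).
\end{equation*}

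Third, I integrate this in $t$. The leading term integrates to $(1+x_0^2)A(K,T)$ with $A(K,T)=\int_0^T e^{Cte^{Ct}}\dd t$, while the subtracted tail is lower bounded by elementary estimates: using $e^{Ct}\geq 1+Ct$ and, valid on $[0,T]$, $1+Ct\geq Ct\bigl(1+\tfrac1{CT}\bigr)$, one obtains $(Cte^{Ct})^{m+1}\geq (Ct)^{2(m+1)}\bigl(1+\tfrac1{CT}\bigr)^{m+1}$, and hence
\begin{equation*}
\frac{1}{(m+1)!}\int_0^T (Cte^{Ct})^{m+1}\dd t \geq \frac{1}{(m+1)!}\frac{T(CT)^{2(m+1)}}{2(m+1)+1}\left(1+\frac1{CT}\right)^{m+1}.
\end{equation*}
Combining the three displays and minimising over $(m,n)\in J_p$ reproduces the stated bound.

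The main obstacle is the middle step: rigorously passing from the best-$p$-terms energy $\sum_{j=1}^p x_{k_j^*}(t)^2$, which by \cref{eq:best_p_terms} is defined through an $L^2([0,T]\times\Omega)$ minimisation and hence captures the \emph{most} energy among $p$-element index sets, to the order-$\leq m$ partial sum that \cref{lemma:decay_squared_coef} bounds. This forces careful use of the cardinality bound $\binom{m+n}{n}\leq p$ defining $J_p$ to guarantee that the selected indices remain within chaos order $\leq m$, so that the inequality points in the admissible direction; the remaining integral estimate of the third step is elementary but must be executed with the precise exponents to recover the constant $\frac{T(CT)^{2(m+1)}}{2(m+1)+1}\bigl(1+\frac1{CT}\bigr)^{m+1}$.
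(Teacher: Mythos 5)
Your proposal follows the paper's proof essentially step for step: the same pointwise Cauchy--Schwarz separation, pulling the deterministic coefficient energy $\sum_{j=1}^p x_{k_j^*}(t)^2$ out of the expectation, bounding it by the order-$\leq m$ partial sum via \cref{lemma:decay_squared_coef}, integrating in $t$, and minimising over $J_p$. The only real difference is cosmetic and in your favour: you get $(Cte^{Ct})^{m+1}\geq (Ct)^{2(m+1)}\bigl(1+\tfrac{1}{CT}\bigr)^{m+1}$ in one line from $e^{Ct}\geq 1+Ct\geq Ct\bigl(1+\tfrac{1}{CT}\bigr)$ on $[0,T]$, whereas the paper expands $(Ct)^{m+1}(1+Ct)^{m+1}$ binomially, integrates term by term, bounds each denominator by $2(m+1)+1$, and resums; both routes produce exactly the stated constant.

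The step you flag as the main obstacle is worth dwelling on, because the paper does not resolve it either: after bounding $\int_0^T\sum_{\ell\leq m}\sum_{|\alpha|=\ell}x_\alpha(t)^2\dd t$, the paper passes to $\int_0^T\sum_{j=1}^p x_{k_j^*}(t)^2\dd t$ with the single phrase ``by definition of $k^*$''. As you correctly observe, the definition \cref{eq:best_p_terms} gives the opposite inequality: by orthonormality of the $\Psi_\alpha$, minimising the truncation error means $k^*$ \emph{maximises} captured energy, so $\int_0^T\sum_j x_{k_j^*}(t)^2\dd t \geq \int_0^T\sum_{\alpha\in\mathcal I_{m,n}}x_\alpha(t)^2\dd t$, and nothing forces the optimal indices to have chaos order $\leq m$ -- they may pick up higher-order terms whose energy is not covered by the partial sum in \cref{lemma:decay_squared_coef}. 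Your phrase ``the $p$ selected terms can be taken to be supported in chaos order $\leq m$'' is therefore not a justification but a restatement of what is missing. A clean repair is to run the error decomposition with the concrete index set $\mathcal I_{m,n}$ (which has $\binom{m+n}{n}\leq p$ elements by definition of $J_p$) in place of the abstract best-$p$ set: the truncation bound of \cref{lemma:ub_trunc_error} survives unchanged (there the appeal to the definition of $k^*$ is used in the valid direction), and the coefficient energy is then genuinely an order-$\leq m$ partial sum. In short, your proof is exactly as complete as the paper's own, and your diagnosis of the weak point is accurate.
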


The approximation of the $\Psi_j$ by neural networks according to~\cref{theorem:nn_approx_hermite} yields the following result.
\begin{corollary}[{Deep ReLU neural networks approximation of polynomials chaos}]\label{corollary:relu_approx_chaos}
    Let $p \in \N$. For any $\varepsilon \in (0,e^{-1}/p)$ there exists a neural network $\Phi_\varepsilon$ such that
    \begin{equation*}
        \E\left[\sum_{j=1}^p |\Psi_{k_j^*} - (R_\sigma\Phi_\varepsilon)_j|^2 \right] \leq \varepsilon
    \end{equation*}
    with complexity given by
    \begin{align*}
        \Size(\Phi_\varepsilon) &\leq Cp \max_{j \in \{1,\dots,p\}}|k_j^*|^3 \log(1+|k_j^*|)|k_j^*|_0^2 \log(p\varepsilon^{-1}),\\
        \Depth(\Phi_\varepsilon) &\leq C\max_{j \in \{1,\dots,p\}} |k_j^*|\log(1+|k_j^*|)^2|k_j^*|_0\log(1+|k_j^*|_0)\log(p\varepsilon^{-1}).
    \end{align*}
    Moreover, given such a neural network $\Phi_\varepsilon$, we have the bound
    \begin{equation*}
        \sum_{j=1}^p \E[|(R_\sigma\Phi_\varepsilon)_j|^2] \leq 2(\varepsilon + p).
    \end{equation*}
\end{corollary}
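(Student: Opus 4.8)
The plan is to deduce this corollary directly from the Hermite approximation bound \cref{theorem:nn_approx_hermite}, after identifying the chaos polynomials $\Psi_{k_j^*}$ with multivariate Hermite polynomials evaluated at a standard Gaussian vector. The conceptual core is the following translation. Since $(e_i)_i$ is orthonormal in $L^2([0,T])$, the Wiener integrals $G_i=\int_0^T e_i(s)\dd W_s$ are jointly Gaussian with $\E[G_iG_j]=\langle e_i,e_j\rangle_{L^2([0,T])}=\delta_{ij}$, hence independent standard normals. Therefore $\Psi_\alpha=\prod_i H_{\alpha_i}(G_i)$ is exactly the multivariate Hermite polynomial $H_\alpha$ evaluated at $G=(G_i)_i$, and for any square-integrable $g$ depending on the coordinates in $\supp(\alpha)$ one has $\E[|\Psi_\alpha-g(G)|^2]=\|H_\alpha-g\|_{L^2_\mu}^2$, where $\mu$ is precisely the multivariate Gaussian measure appearing in \cref{theorem:nn_approx_hermite}.

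With this identification I would set $\Lambda:=\{k_1^*,\dots,k_p^*\}$, so that $|\Lambda|\le p$, $m(\Lambda)=\max_j|k_j^*|$ and $d(\Lambda)=\max_j|k_j^*|_0$, and apply \cref{theorem:nn_approx_hermite} with intermediate accuracy $\varepsilon':=\varepsilon/\sqrt p$. Since $p\ge 1$ and $\varepsilon<e^{-1}/p\le e^{-1}$, we have $\varepsilon'\le\varepsilon<e^{-1}$, so the hypothesis $\varepsilon'\in(0,e^{-1})$ is met and we obtain a network $\Phi_\varepsilon$ whose $j$-th output satisfies $\|H_{k_j^*}-(R_\sigma\Phi_\varepsilon)_j\|_{L^2_\mu}\le\varepsilon'$. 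Summing the squared errors and using the identification gives
\[
\E\!\left[\sum_{j=1}^p|\Psi_{k_j^*}-(R_\sigma\Phi_\varepsilon)_j|^2\right]=\sum_{j=1}^p\|H_{k_j^*}-(R_\sigma\Phi_\varepsilon)_j\|_{L^2_\mu}^2\le p(\varepsilon')^2=\varepsilon^2\le\varepsilon,
\]
the last inequality because $\varepsilon<1$.

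For the complexity bounds I would substitute $|\Lambda|\le p$, $m(\Lambda)=\max_j|k_j^*|$ and $d(\Lambda)=\max_j|k_j^*|_0$ into the $\Size$ and $\Depth$ estimates of \cref{theorem:nn_approx_hermite}, which reproduces the stated expressions (the three factors in the maximum over $j$), after absorbing $\log((\varepsilon')^{-1})=\log(\sqrt p\,\varepsilon^{-1})\le\log(p\varepsilon^{-1})$ into the constant $C$. For the final inequality I would use that the $\Psi_\alpha$ are orthonormal in $L^2(\Omega)$, so $\E[|\Psi_{k_j^*}|^2]=1$; combining the elementary bound $|a|^2\le2|a-b|^2+2|b|^2$ with the approximation estimate just proved yields
\[
\sum_{j=1}^p\E[|(R_\sigma\Phi_\varepsilon)_j|^2]\le2\sum_{j=1}^p\E[|\Psi_{k_j^*}-(R_\sigma\Phi_\varepsilon)_j|^2]+2\sum_{j=1}^p\E[|\Psi_{k_j^*}|^2]\le2\varepsilon+2p.
\]

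The only genuinely delicate point, and the step I would treat most carefully, is the passage from the abstract expectation to the Gaussian $L^2_\mu$-norm together with the matching of input dimensions: the network of \cref{theorem:nn_approx_hermite} acts on the $|\supp(\Lambda)|$ coordinates indexed by $\supp(\Lambda)$, and since these are among the encoded variables $(G_i)_{i=0}^{m-1}$, composing with the cost-free linear coordinate selection lets us regard $\Phi_\varepsilon$ as acting on the full encoder output without affecting the asymptotic complexity. Everything else is bookkeeping of constants and the choice of the intermediate accuracy $\varepsilon'$.
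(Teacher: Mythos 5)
Your overall strategy is the right one and largely matches the paper's: identify $\Psi_{k_j^*}$ with multivariate Hermite polynomials in the i.i.d.\ standard Gaussians $G_i$ (a point the paper leaves implicit), invoke \cref{theorem:nn_approx_hermite}, and obtain the last moment bound from $|a|^2\le 2|a-b|^2+2|b|^2$ together with the orthonormality $\E[|\Psi_{k_j^*}|^2]=1$. Your treatment of the $L^2$ error estimate and of the final moment bound is correct.

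There is, however, a genuine (if quantitative) gap in the complexity part. You apply \cref{theorem:nn_approx_hermite} once with $\Lambda=\{k_1^*,\dots,k_p^*\}$, so its bounds involve the separate maxima $m(\Lambda)=\max_j|k_j^*|$ and $d(\Lambda)=\max_j|k_j^*|_0$, and what you actually obtain is
\begin{equation*}
    \Size(\Phi_\varepsilon) \le C\,p\,\Bigl(\max_j|k_j^*|\Bigr)^3\log\Bigl(1+\max_j|k_j^*|\Bigr)\Bigl(\max_j|k_j^*|_0\Bigr)^2\log(p\varepsilon^{-1}),
\end{equation*}
i.e.\ the product of maxima, whereas the corollary asserts the maximum of the product, $\max_j\bigl[|k_j^*|^3\log(1+|k_j^*|)\,|k_j^*|_0^2\bigr]$, which can be strictly smaller: if, say, $k_1^*$ has large total degree but sparse support while $k_2^*$ has small degree but many active coordinates, the two maxima are attained at different indices and your bound overshoots. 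The same issue affects the depth estimate, so your substitution does not ``reproduce the stated expressions.'' The paper avoids this by applying \cref{theorem:nn_approx_hermite} $p$ times with the singletons $\Lambda_j=\{k_j^*\}$, each with accuracy $\varepsilon/p$ (which is also why the hypothesis reads $\varepsilon<e^{-1}/p$), and then combining the $p$ networks by parallelisation (\cref{proposition:parallelisation}): the size of the parallelisation is the sum of the individual sizes, which is bounded by $p$ times the maximum of the per-index products, exactly as stated. So your argument proves the corollary only with a weaker (larger) complexity bound; to get the bound as written, switch to the singleton-plus-parallelisation route (noting, as the paper tacitly does, that the individual networks must be padded to a common depth before parallelising).
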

\begin{proof}
    Recall that $k^* := \argmin_{k = (k_1,\dots,k_p) \in \mathcal I^p} \int_0^T \E[|X_t - \sum_j x_{k_j}(t) \Psi_{k_j}(W)|^2]\dd t$ and let $j \in \{0, \dots, p\}$.
    By \cref{theorem:nn_approx_hermite} (take $\Lambda = \{k_j^*\}$) there exists a neural network $\Phi_{\varepsilon,j}$ such that
    \begin{equation*}
        \|\Psi_{k_j^*} - R_\sigma\Phi_{\varepsilon,j}\|_{L_\mu^2(\R^{|k_j^*|_0})} \leq \frac \varepsilon p.
    \end{equation*}
    Moreover, there exists a positive constant $C$ (independent of $|k_j^*|$, $|k_j^*|_0$, $\varepsilon$ and of $p$) such that
    \begin{align*}
        \Size(\Phi_{\varepsilon,j}) &\leq C |k_j^*|^3 \log(1+|k_j^*|)|k_j^*|_0^2 \log(p\varepsilon^{-1}),\\
        \Depth(\Phi_{\varepsilon,j}) &\leq C|k_j^*|\log(1+|k_j^*|)^2|k_j^*|_0\log(1+|k_j^*|_0)\log(p\varepsilon^{-1}).
    \end{align*}
    The result follows by parallelisation as in \cref{proposition:parallelisation}.

    Next, note that
    \begin{align*}
        \sum_{j=1}^p \E[|(R_\sigma\Phi_\varepsilon)_j|^2] &= \sum_{j=1}^p \E[|(R_\sigma\Phi_\varepsilon)_j - \Psi_{k_j^*} + \Psi_{k_j^*}|^2]\\
        &\leq 2\left(\sum_{j=1}^p \E[|(R_\sigma\Phi_\varepsilon)_j - \Psi_{k_j^*}|^2] + \sum_{j=1}^p \E[|\Psi_{k_j^*}|^2]\right).
    \end{align*}
    The first term is bounded by $\varepsilon$ due to the choice of $\Phi_\varepsilon$. Concerning the second term, the polynomials chaos are orthonormalised and hence $\E[|\Psi_{k_j^*}|^2] = 1$.
\end{proof}

\subsection{Reconstruction error}
\label{sec:reconstruction error}

For the following results, we consider the common activation function $\sigma=\ReLU$.
The last error term in~\cref{eq:full_error} is the approximation of the deterministic coefficient functions in the Wiener chaos expansion \cref{eq:sde_params}.
We show that they can be approximated by neural networks based on the regularity of the corresponding ODE trajectories and~\cref{theorem:approx_holder_nn}.
\begin{corollary}[{Approximation of ODEs}]\label{corollary:approx_1d_odes}
    Let $f : [t_0,t_1] \times \R^m \to \R^m \in C^k$ and Lipschitz with respect to the second variable, $k \in \N$, $p > 0$, and $x_0 \in \R^m$.
    Consider the Cauchy problem with $X : [t_0, t_1] \to \R^m$,
    \begin{equation}
        \frac{\dd X}{\dd t}(t) = f(t, X(t)),\quad X(t_0)=x_0.\label{eq:1d_ode}
    \end{equation}
    The ODE~\cref{eq:1d_ode} has a unique solution $X : [t_0,t_1] \to \R^m \in C^{k+1}$ and for any $\varepsilon \in (0,(m(t_1-t_0))^{1/p}/2)$ there exists a neural network $\Phi_\varepsilon^X \in \mathcal N_{1,m}$ such that
    \begin{align*}
        \|R_\sigma(\Phi_\varepsilon^X) - X\|_{L^p([t_0,t_1])} < \varepsilon,\quad
        \|R_\sigma(\Phi_\varepsilon^X)\|_\infty \leq K,
    \end{align*}
    where $K = K(t_0, t_1, X)$ is a constant, and
    \begin{align*}
        \Depth(\Phi_\varepsilon^X) &\leq (2+\lceil \log_2(k+1)\rceil)(12+k),\\
        \Size(\Phi_\varepsilon^X) &\leq mc\left(\frac{\varepsilon}{(t_1-t_0)^{1/p}}\right)^{-\frac{1}{k+1}}.
    \end{align*}
\end{corollary}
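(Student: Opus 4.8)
The plan is to split the statement into two parts: first establish that the exact solution has the claimed $C^{k+1}$ regularity, and then transfer the Hölder approximation result \cref{theorem:approx_holder_nn} from the reference cube $[-1/2,1/2]$ to the interval $[t_0,t_1]$ by an affine change of variables, treating the $m$ output components through parallelisation. For the regularity, I would first invoke the classical Picard--Lindelöf theorem: since $f$ is continuous and (uniformly) Lipschitz in its second argument, \cref{eq:1d_ode} admits a unique solution $X \in C^1([t_0,t_1],\R^m)$. To upgrade this to $C^{k+1}$ I bootstrap using the equation itself: if $X \in C^j$ for some $1 \le j \le k$, then $t \mapsto (t,X(t))$ is $C^j$ and, composed with $f \in C^k$, gives $X' = f(\cdot,X(\cdot)) \in C^{\min(j,k)} = C^j$, hence $X \in C^{j+1}$. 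Starting from $j=1$ and iterating $k$ times yields $X \in C^{k+1}([t_0,t_1],\R^m)$ (for $k=0$ this is already the base case).

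Next I would pass to the reference interval. Let $\phi:[-1/2,1/2]\to[t_0,t_1]$, $\phi(s):=t_0+(t_1-t_0)(s+\tfrac12)$, be the affine bijection and set $\tilde X := X\circ\phi \in C^{k+1}([-1/2,1/2],\R^m)$. Since the interval is compact, each component $\tilde X_i$ lies in the Hölder class $C^\beta$ with $\beta=(k,1)$: indeed $\tilde X_i^{(k)}$ is $C^1$, hence Lipschitz, so $\Lip_1(\tilde X_i^{(k)})<\infty$ and $\|\tilde X_i\|_{C^{(k,1)}} \le B$ for some $B=B(t_0,t_1,X)$ depending only on finitely many sup-norms of derivatives of $X$. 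This is the key step, converting ODE regularity into the input required by \cref{theorem:approx_holder_nn}; it is where $n+\xi=k+1$ enters, producing the size exponent $1/(k+1)$ and the depth factor $12+k$.

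Then I would apply \cref{theorem:approx_holder_nn} with $d=1$ and $\beta=(k,1)$ to each $\tilde X_i$ at accuracy $\tilde\varepsilon := \varepsilon/(m(t_1-t_0))^{1/p}$, which lies in $(0,1/2)$ precisely because of the hypothesis $\varepsilon<(m(t_1-t_0))^{1/p}/2$. This yields networks $\tilde\Phi_i$ with $\|R_\sigma\tilde\Phi_i-\tilde X_i\|_{L^p([-1/2,1/2])}<\tilde\varepsilon$, with $\|R_\sigma\tilde\Phi_i\|_\infty \le \lceil B\rceil =: K$, and with $\Depth(\tilde\Phi_i)\le(2+\lceil\log_2(k+1)\rceil)(12+k)$ and $\Size(\tilde\Phi_i)\le c\,\tilde\varepsilon^{-1/(k+1)}$. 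Because these depth bounds are uniform in $i$, I pad to a common depth and parallelise via \cref{proposition:parallelisation}, obtaining $\tilde\Phi\in\mathcal N_{1,m}$ of the same depth and of size $\le m\,c\,\tilde\varepsilon^{-1/(k+1)}$. Precomposing with $\phi^{-1}$ by folding the affine map into the first layer changes neither depth nor size, and defines $\Phi_\varepsilon^X$. The change of variables $t=\phi(s)$ then gives $\|R_\sigma\Phi_\varepsilon^X-X\|_{L^p([t_0,t_1])}^p = (t_1-t_0)\sum_{i=1}^m\|R_\sigma\tilde\Phi_i-\tilde X_i\|_{L^p}^p \le (t_1-t_0)\,m\,\tilde\varepsilon^p = \varepsilon^p$, and the stated depth and size bounds follow (up to absorbing the lower-order factor in $m$ into the constant $c$).

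The main obstacle I expect is the simultaneous bookkeeping of the two rescalings: distributing the target error across the $m$ components (the $m^{1/p}$ factor) and accounting for the Jacobian of the time reparametrisation (the $(t_1-t_0)^{1/p}$ factor), while checking that the admissibility condition $\tilde\varepsilon<1/2$ of \cref{theorem:approx_holder_nn} is \emph{exactly} the stated constraint on $\varepsilon$. The regularity bootstrap and the parallelisation are routine; the delicate point is verifying that the $L^\infty$ constant $K=K(t_0,t_1,X)$ and the Hölder norm bound $B$ are independent of $\varepsilon$, which holds because they depend only on derivatives of $X$ on the fixed compact interval.
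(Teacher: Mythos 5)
Your proposal is correct and follows essentially the same route as the paper's proof: Picard--Lindel\"of plus a bootstrap for the $C^{k+1}$ regularity, an affine reparametrisation onto $[-1/2,1/2]$ so that \cref{theorem:approx_holder_nn} applies with $\beta=(k,1)$, folding the affine map into the first network layer, and componentwise parallelisation via \cref{proposition:parallelisation} with the $\varepsilon/m^{1/p}$ error split. The only difference is bookkeeping: you fix a single per-component accuracy $\varepsilon/(m(t_1-t_0))^{1/p}$ up front, whereas the paper first proves the case $m=1$ at accuracy $\varepsilon/(t_1-t_0)^{1/p}$ and then invokes it componentwise; both versions absorb the same residual $m^{1/(p(k+1))}$ factor into the constant in the size bound.
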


\begin{lemma}\label{lemma:nn_reconstruction}
    With \cref{assump:uniqueness_sde} and $n$-times continuously differentiable $\mu,\sigma,e_j$ with respect to their variables, for any $\varepsilon \in (0, \sqrt{pT}/2)$ there exists a neural network $\Phi_\varepsilon \in \mathcal N_{1,p}$ such that
    \begin{equation*}
        \sum_{j=1}^p \|x_{k_j^*}-\widetilde{x_j}\|_{L^2([0,T])}^2 \leq \varepsilon^2.
    \end{equation*}
    The \emph{reconstruction error} can be bounded like
    \begin{equation*}
        \hat E_\text{Recon}^2 \leq \varepsilon^2 \left(\sum_{j=1}^p \E[\widetilde{\Psi_j}^2]\right)
    \end{equation*}
    and
    \begin{align*}
        \Depth(\Phi_\varepsilon) &\leq (2+\lceil \log_2(n+1)\rceil)(12+n),\\
        \Size(\Phi_\varepsilon) &\leq pc\left(\frac{\varepsilon}{\sqrt T}\right)^{-\frac{1}{n+1}}.
    \end{align*}
\end{lemma}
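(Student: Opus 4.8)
The plan is to construct $\Phi_\varepsilon$ by approximating each scalar coefficient function $x_{k_j^*}$ individually and then parallelising the $p$ resulting subnetworks. The point of departure is that each $x_{k_j^*}$ is a component of the propagator system in \cref{theorem:propagator_system}, whose trajectory inherits enough temporal smoothness from the data $\mu,\sigma,e_j$ to be covered by \cref{corollary:approx_1d_odes}. Concretely, I would first argue that $x_{k_j^*} \in C^{n+1}([0,T])$: the right-hand side of \cref{eq:time_ode_coeff} is built from the chaos coefficients of $\mu(t,X_t)$, $\sigma(t,X_t)$ and the Haar functions $e_j(t)$, and since $\mu,\sigma,e_j$ are $n$-times continuously differentiable, the usual bootstrap for ODEs (a solution is one degree smoother than its vector field) upgrades the $C^1$ solution to $C^{n+1}$. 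This is precisely the regularity half of \cref{corollary:approx_1d_odes}.

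With this regularity secured, I would apply \cref{corollary:approx_1d_odes} to each $x_{k_j^*}$ with $k=n$, $m=1$, $[t_0,t_1]=[0,T]$ and the $L^2$-norm, so that $(t_1-t_0)^{1/2}=\sqrt T$, producing for each $j$ a network $\Phi_{\varepsilon,j}\in\mathcal N_{1,1}$ with $\|x_{k_j^*}-\widetilde{x_j}\|_{L^2([0,T])}\leq \varepsilon/\sqrt p$. The corollary's admissibility range $\varepsilon/\sqrt p < \sqrt T/2$ is exactly the stated condition $\varepsilon\in(0,\sqrt{pT}/2)$. Parallelising these subnetworks via \cref{proposition:parallelisation} (padding to a common depth, which is legitimate since the depth bound $(2+\lceil\log_2(n+1)\rceil)(12+n)$ depends only on $n$) yields $\Phi_\varepsilon\in\mathcal N_{1,p}$ with that same depth and with size summed over $j$, giving $\Size(\Phi_\varepsilon)\leq pc(\varepsilon/\sqrt T)^{-1/(n+1)}$ after absorbing the $p$-dependent accuracy factor into the constant. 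Summing the per-component errors yields $\sum_{j=1}^p\|x_{k_j^*}-\widetilde{x_j}\|_{L^2}^2 \leq p(\varepsilon/\sqrt p)^2=\varepsilon^2$.

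For the reconstruction error I would insert $\widetilde{x_j}=(R_\sigma\Phi_\varepsilon)_j$ into \cref{eq:reconstruct_error} and apply the discrete Cauchy--Schwarz inequality pointwise in $\omega$,
\[
    \Bigl|\sum_{j=1}^p \bigl(x_{k_j^*}(t)-\widetilde{x_j}(t)\bigr)\widetilde{\Psi_j}\Bigr|^2 \leq \Bigl(\sum_{j=1}^p \bigl(x_{k_j^*}(t)-\widetilde{x_j}(t)\bigr)^2\Bigr)\Bigl(\sum_{j=1}^p \widetilde{\Psi_j}^2\Bigr),
\]
then take the expectation, integrate in $t$, and factor out the ($t$-independent) random part to obtain $\hat E_\text{Recon}^2 \leq \bigl(\sum_{j=1}^p\|x_{k_j^*}-\widetilde{x_j}\|_{L^2}^2\bigr)\,\E[\sum_{j=1}^p\widetilde{\Psi_j}^2]\leq \varepsilon^2\,\E[\sum_{j=1}^p\widetilde{\Psi_j}^2]$, which is the asserted bound.

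The hard part will be the regularity step. The propagator system is an infinite, fully coupled system, so justifying that each selected coefficient $x_{k_j^*}$ is genuinely $C^{n+1}$ is not the same as differentiating a single decoupled scalar ODE: one must control the smoothness in $t$ of the chaos coefficients $\mu(t,X_t)_\alpha$ and $\sigma(t,X_t)_{\alpha^-(j)}$ of the composed drift and diffusion fields. Once this smoothness is established, the remaining steps are routine invocations of \cref{corollary:approx_1d_odes} and \cref{proposition:parallelisation}.
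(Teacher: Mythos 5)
Your proof is correct and takes essentially the same route as the paper: the paper likewise bounds $\hat E_\text{Recon}$ by Cauchy--Schwarz and then invokes \cref{corollary:approx_1d_odes} directly with $m=p$ components, whose internal proof is precisely your construction (per-component accuracy $\varepsilon/\sqrt{p}$, then parallelisation via \cref{proposition:parallelisation}). The two issues you flag --- the $C^{n+1}$ regularity of the coupled propagator coefficients and the $p$-dependent factor absorbed into the size constant --- are present but left unaddressed in the paper's own argument as well.
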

\begin{proof}
    By the Cauchy-Schwarz inequality one has
    \begin{align*}
        \left|\sum_{j=1}^p x_{k_j^*}(t)\widetilde{\Psi_j} - \sum_{j=1}^p \widetilde{x_j}(t) \widetilde{\Psi_j} \right|^2 &= \left|\sum_{j=1}^p (x_{k_j^*}(t) - \widetilde{x_j}(t))\widetilde{\Psi_j} \right|^2\\
        &\leq \left(\sum_{j=1}^p |x_{k_j^*}(t) - \widetilde{x_j}(t)|^2 \right) \left(\sum_{j=1}^p \widetilde{\Psi_j}^2\right).
    \end{align*}
    Taking the expectation and integrating with respect to $t$ leads to
    \begin{equation*}
        \int_0^T \E\left[\left|\sum_{j=1}^p x_{k_j^*}(t)\widetilde{\Psi_j} - \sum_{j=1}^p \widetilde{x_j}(t) \widetilde{\Psi_j} \right|^2 \right]\dd t \leq \left(\sum_{j=1}^p \|x_{k_j^*}-\widetilde{x_j}\|_{L^2([0,T])}^2\right)\left(\sum_{j=1}^p \E[\widetilde{\Psi_j}^2]\right).
    \end{equation*}

    Recall that $\mu,\sigma,e_j$ are $n$-times continuously differentiable with respect to their variables.
    By \cref{corollary:approx_1d_odes}, for any $\varepsilon \in (0, \sqrt{pT}/2)$ there exists a neural network $\Phi_\varepsilon \in \mathcal N_{1,p}$ such that
    \begin{align*}
        \Depth(\Phi_\varepsilon) \leq (2+\lceil \log_2(n+1)\rceil)(12+n),\quad
        \Size(\Phi_\varepsilon) \leq pc\left(\frac{\varepsilon}{\sqrt T}\right)^{-\frac{1}{n+1}}.
    \end{align*}
    Moreover,
    \begin{equation*}
        \sum_{j=1}^p \|x_{k_j^*}-\widetilde{x_j}\|_{L^2([0,T])}^2 \leq \varepsilon^2. 
    \end{equation*}
\end{proof}

\subsection{D-dimensional SDE}\label{sec:d dimensional}

Let $H = L^2([0,T]; \R^d) \cong L^2([0,T]) \otimes \R^d$ and $B := (B_t^1, \dots, B_t^d)_{t \in [0,T]}$ be a $d$-dimensional Brownian motion defined on $(\Omega, \mathcal F, (\mathcal F_t)_{t \in [0,T]}, \mathbb P)$, where $(\mathcal F_t)_t$ is its natural filtration and $\mathcal F := \sigma(W(h) : h \in H)$. Then,
\begin{equation*}
    W(h) := \sum_{k=1}^d \int_0^T h^k(t) \dd W_t^k
\end{equation*}
is an \emph{isonormal Gaussian process} for $H$.

Let $(\phi_{ij})_{j=1,i \geq 0}^{j=d}$ be an orthonormal basis of $L^2([0,T]; \R^d)$.
For instance, $\phi_{ij} = \psi_i \otimes e_j$, where $(\psi_i)_{i \geq 0}$ is an orthonormal basis of $L^2([0,T])$ and $(e_j)_{j=1}^d$ is the canonical basis of $\R^d$.
Then, any random variable $F \in L^2(\Omega, \mathcal F, \mathbb P)$ admits the following Wiener chaos expansion
\begin{equation*}
    F = \sum_{k=0}^\infty \sum_{\substack{|\alpha|=k\\\alpha \in \mathcal I^d}} f_\alpha \left(\underbrace{\prod_{j=1}^d \prod_{i=1}^\infty H_{\alpha_i^j}(W(\phi_{ij}))}_{=:\Psi_\alpha}\right).
\end{equation*}

Consider the following $d$-dimensional It\^o process
\begin{equation*}
    \dd\mathbf X_t = \mathbf\mu(t, \mathbf X_t)\dd t + \mathbf\sigma(t, \mathbf X_t)\cdot\dd\mathbf B_t,
\end{equation*}
which in integral form reads
\begin{equation*}
    \mathbf X_t = \mathbf X_0 + \int_0^t \mu(s, \mathbf X_s)\dd s + \int_0^t \sigma(s, \mathbf X_s)\cdot \dd\mathbf B_s.
\end{equation*}
The Wiener chaos expansion for each component $X_t^j$ of $\mathbf X_t$ is given by
\begin{equation*}
    X_t^j = \sum_{k=0}^\infty \sum_{\substack{|\alpha|=k\\\alpha \in \mathcal I^d}} x_\alpha^j(t) \left(\prod_{j=1}^d \prod_{i=1}^\infty H_{\alpha_i^j}(W(\phi_{ij}))\right),\qquad 1\leq j\leq d.
\end{equation*}
Following the proof of \cref{theorem:propagator_system}, it is possible to show that the $x_\alpha^j(t)$ satisfy a system of ordinary differential equation.

Suppose that we want to approximate each component $X_t^j$ with $p$ coefficients.
Then, the analysis for the approximation of the coefficients $x_\alpha^j(t)$ \cref{lemma:nn_reconstruction} does not change qualitatively since by \cref{proposition:parallelisation} the size is only multiplied by $d$.
Since we have $d$ times more Hermite polynomials in the polynomial chaos $\Psi_\alpha$, by \cref{corollary:relu_approx_chaos} the size of the network again is multiplied by $d$.
    \section{Numerical experiments}
\label{sec:experiments}

This section is concerned with illustrating the practical properties of our SDEONet architecture.
In particular, it is shown to approximate the stochastic process $(X_t)_t$ at any time $0\leq t\leq T$ in numerical computations with a reasonable number of parameters.
To enforce the learning of the initial condition $X_0$, the training loss is extended by a second term, which then reads
\begin{equation}
    \mathcal L(\theta) = \frac 1 B \left(\sum_{i=1}^B \|X_{t_i}-R_\sigma \mathcal N_\theta^{m,p}(W^i, t_i)\|_2^2 + \sum_{i=1}^B \|X_0 - R_\sigma \mathcal N_\theta^{m,p}(W^i, 0)\|_2^2\right).
    \label{eq:loss}
\end{equation}
To assess the performance of our model, we use different metrics.
These are computed at each time step on the time grid to check if our model is able to approximate the stochastic process at any time $t$.
The first two are the absolute $L^2$ error $\|X_t - \tilde X_t\|_{L^2}$ and the relative $L^2$ error $\frac{\|X_t - \tilde X_t\|_{L^2}}{\|X_t\|_{L^2}}$.
They are approximated by a Monte-Carlo estimation
\begin{equation*}
    \|F\|_{L^2} \approx \left(\frac 1 N \sum_{i=1}^N |F^i|^2\right)^{1/2},
\end{equation*}
where $(F_i)_{i=1}^N$ are realisations of $F$.

Another reasonable metric to consider is the Wasserstein $2$-distance defined by
\begin{equation*}
    \mathcal W_2(\mu, \nu)^2 := \min_{\gamma \in \Pi(\mu, \nu)}\int_{\R^d}\|x-y\|_2^2 \dd\gamma(x,y),
\end{equation*}
where $\Pi(\mu,\nu) := \left\{\gamma \in \mathcal P(\R^d \times \R^d) : (\pi_0)_\sharp \gamma = \mu, (\pi_1)_\sharp \gamma = \nu \right\}$ is the \textit{transport plan} and $\pi_0$ and $\pi_1$ are the two projections of $\R^d \times \R^d$ onto its factors.
When $d=1$, it is possible to approximate it by considering the empirical measures $\mu_n = \frac 1 n \sum_{i=1}^n \delta_{X_i}$, $\nu_n = \frac 1 n \sum_{i=1}^n \delta_{Y_i}$ and then to compute
\begin{equation*}
    \mathcal W_2(\mu_n, \nu_n)^2 = \frac 1 n \sum_{i=1}^n \|X_{(i)} - Y_{(i)}\|_2^2,
\end{equation*}
where $X_{(1)} \leq X_{(2)} \leq \dots \leq X_{(n)}$.
In higher dimensions $d > 1$, the computation of $W_2$ is quite elaborate.
Approximations can be obtained by means of the \emph{Sinkhorn algorithm}~\cite{NIPS2013_af21d0c9,chizat2020faster}.
This algorithm makes use of \emph{Sinkhorn divergence}, as described in~\cite{chizat2020faster}, and allows the computation of an approximation of the Wasserstein distance without the penalty of the dimension.
\subsection{1D processes}
\label{sec:1D}

In the next experiments the model is defined by $m=32$, $p=64$, 2 hidden layers of 256 neurons each.
The model is learned on a dataset of 20,000 samples of $X_t$ with $t \sim U(0, T)$ during 30 epochs with a learning rate of $3\cdot 10^{-4}$ with the Adam optimizer~\cite{Kingma2014AdamAM} and a batch size of $64$.

\subsubsection{Ornstein-Uhlenbeck process}
\label{sec:OU}

The Ornstein-Uhlenbeck process is a crucial stochastic process in the area of mathematical physics and stochastic calculus.
It is a continuous-time stochastic process that finds extensive application in emulating a diverse range of phenomena across multiple fields such as physics, finance, biology, and engineering.
This process is useful for modelling mean-reversing behaviour, where a variable tends to return to its mean over time.
This makes it a valuable tool for understanding and modelling stable and self-correcting processes. 
It is defined by
\begin{equation*}
    \dd X_t = -\theta(X_t - \mu)\dd t + \sigma \dd W_t,
\end{equation*}
where $\theta >0$, $\mu$ and $\sigma > 0$ are parameters.
Using Itô's formula, it is possible to get an explicit expression of $X_t$ given $X_0$ by
\begin{equation*}
    X_t = X_0 e^{-\theta t} + \mu(1 - e^{-\theta t}) + \frac{\sigma}{\sqrt{2\theta}} W_{1 - e^{-2\theta t}}.
\end{equation*}
For the numerical experiments, we chose $\theta=1$, $\mu=1.2$, and $\sigma = 1.3$.

\begin{figure}[t]
    \centering
    \subfloat[Geometric Brownian motion\label{fig:gbm_trajectories}]{
        \includegraphics[width=0.45\linewidth]{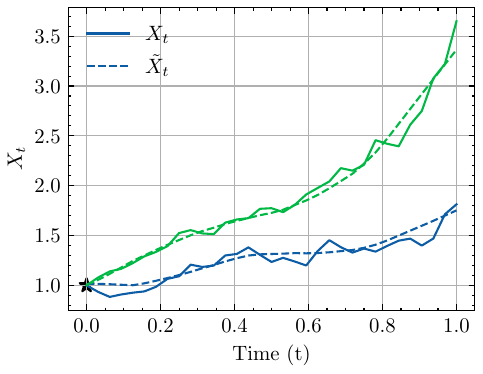}
    }
    \subfloat[Ornstein-Uhlenbeck process\label{fig:ou_trajectories}]{
        \includegraphics[width=0.45\linewidth]{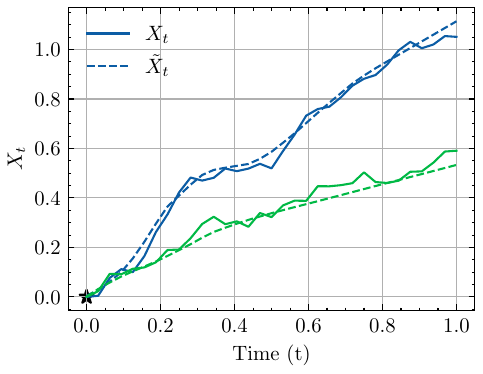}
    }
    \caption{Trajectories of the true processes $X_t$ and the approximations $\tilde X_t$.}
    \label{fig:trajectories}
\end{figure}

Examining the trajectories depicted in \cref{fig:ou_trajectories}, we observe that the approximation $\tilde X_t$ exhibits a notable smoother behaviour compared to the true stochastic process $X_t$.
This behaviour is likely due to a small dimension of polynomial chaos.
Moreover, we can discern the impact of the additional term incorporated into the loss function.
This supplementary term plays a crucial role in aiding the model to effectively learning the initial state, represented by $X_0$, which is confirmed by \cref{fig:ou_l2_loss}.
\cref{fig:ou_w2} also shows that the model is able to learn the stochastic process at each time step $t$ very accurately since the random variables $X_t$ and $\tilde X_t$ are close in distribution.

\begin{figure}[ht]
    \centering
    \subfloat[$L^2$ loss (blue) and relative $L^2$ loss (red) over time\label{fig:ou_l2_loss}]{
        \includegraphics[width=0.5\linewidth]{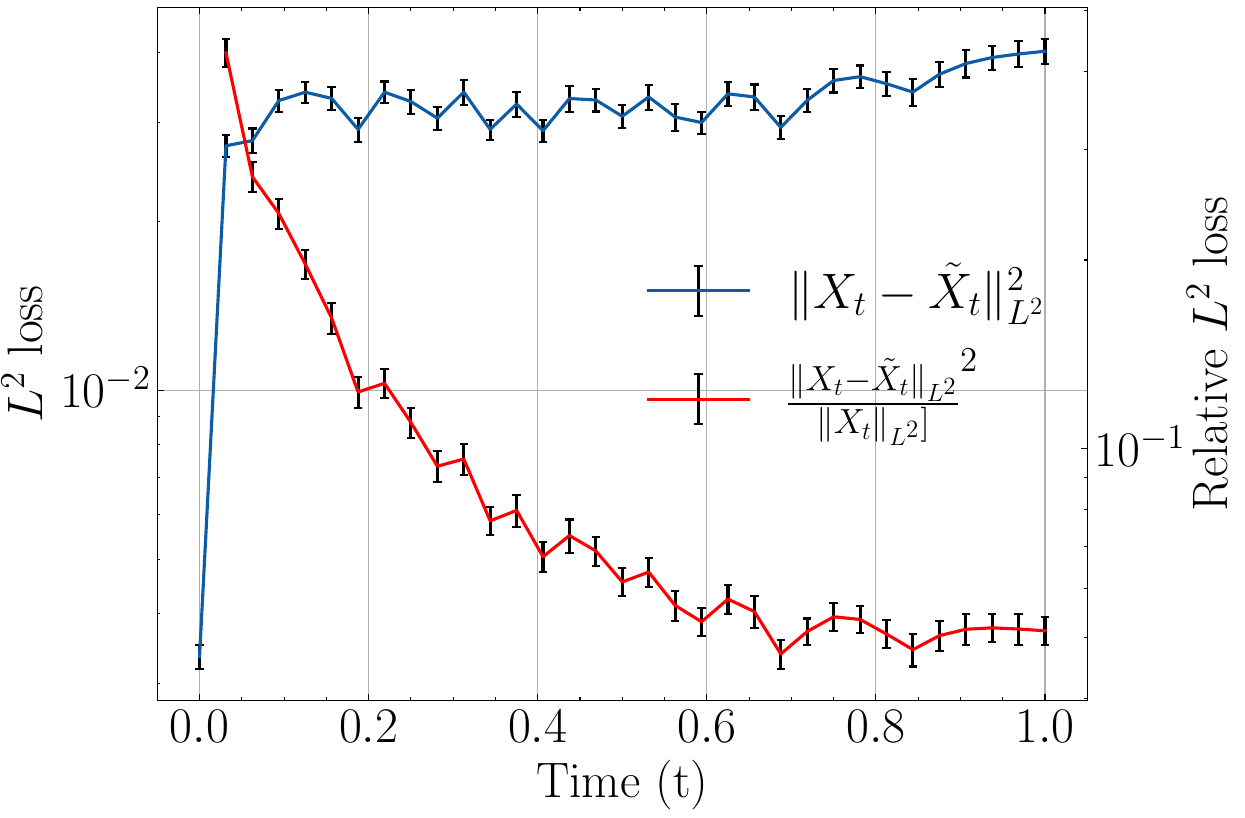}
    }
    \subfloat[$W_2(X_t, \tilde X_t)$ over time\label{fig:ou_w2}]{
        \includegraphics[width=0.43\linewidth]{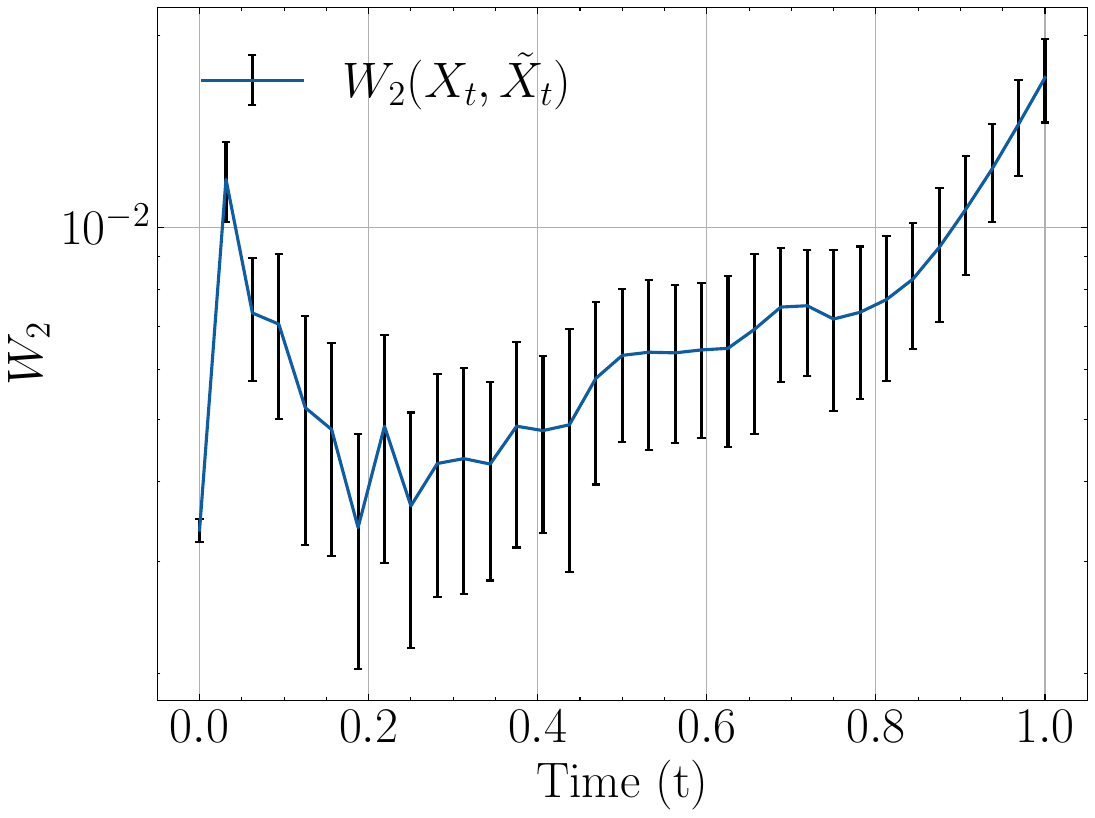}
    }
    \caption{$L^2$ loss and Wasserstein 2-distance over time for the Ornstein-Uhlenbeck process, computed over 2,000 samples and averaged over 100 independent realisations. The error bars correspond to $3\sigma$.}
    \label{fig:ou_loss}
\end{figure}

\subsubsection{Geometric Brownian motion}
\label{sec:geometric brownian}

The Geometric Brownian motion (GBM) is a widely used stochastic process in finance, mathematical modelling, and statistical physics.
In particular, this process is crucial for understanding and describing the pricing of financial assets and to model stock prices.
GBM represents an extension of the classic Brownian movement, which incorporates exponential growth and is characterised by its capacity to capture the innate uncertainty and volatility linked with genuine financial markets.

The Geometric Brownian motion is defined by
\begin{equation*}
    \dd X_t = \mu X_t \dd t + \sigma X_t \dd W_t,
\end{equation*}
where $\mu$ and $\sigma > 0$ are parameters.
Using Itô's formula, it is possible to obtain an explicit expression of $X_t$ given $X_0$, namely
\begin{equation*}
    X_t = X_0 \exp\left(\left(\mu - \frac{\sigma^2}{2}\right)t + \sigma W_t \right).
\end{equation*}

For the numerical experiments, we chose $\mu=1.0$ and $\sigma = 0.3$.

The plot in \cref{fig:gbm_trajectories} shows a similar behaviour as for the Ornstein-Uhlenbeck process above.
However, we see that the approximation is slightly worse due to the nature of the Geometric Brownian motion.
In practice, we also notice that for large $\sigma$ it becomes more difficult to learn the stochastic process.

\cref{fig:gbm_l2_loss} shows that with this choice of parameters $\mu$ and $\sigma$ the model is able to learn the stochastic process with a small $L^2$ and relative $L^2$ error.
Moreover, \cref{fig:gbm_w2} illustrates that the random variables $X_t$ and $\tilde X_t$ are close in distribution at each time step $t$.
This can be considered a more appropriate metric to assess the accuracy of the learned operator model.

\begin{figure}[ht]
    \centering
    \subfloat[$L^2$ loss (blue) and relative $L^2$ loss (red) over time for the approximation of the GBM\label{fig:gbm_l2_loss}]{
        \includegraphics[width=0.5\linewidth]{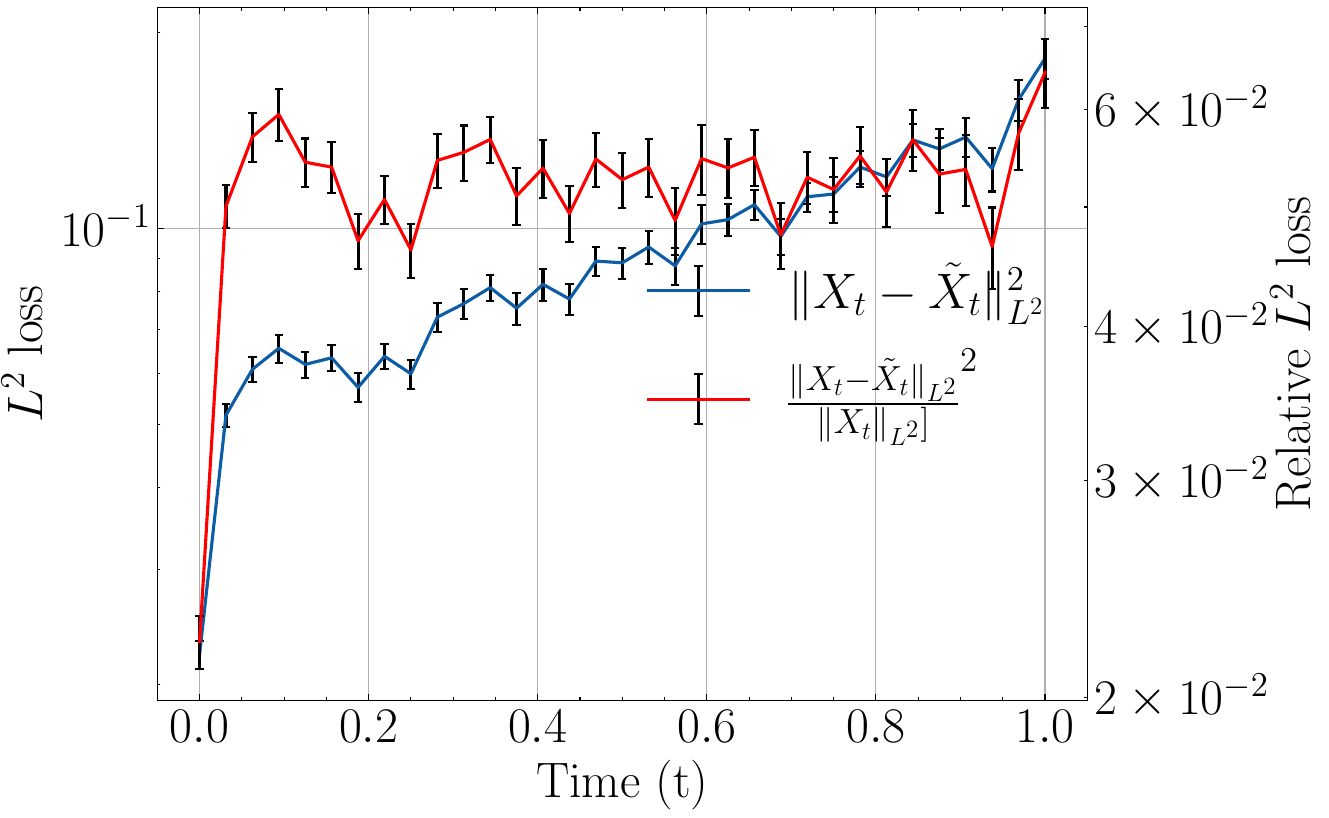}
    }
    \subfloat[$W_2(X_t, \tilde X_t)$ over time\vspace{3ex}\label{fig:gbm_w2}]{
        \includegraphics[width=0.4\linewidth]{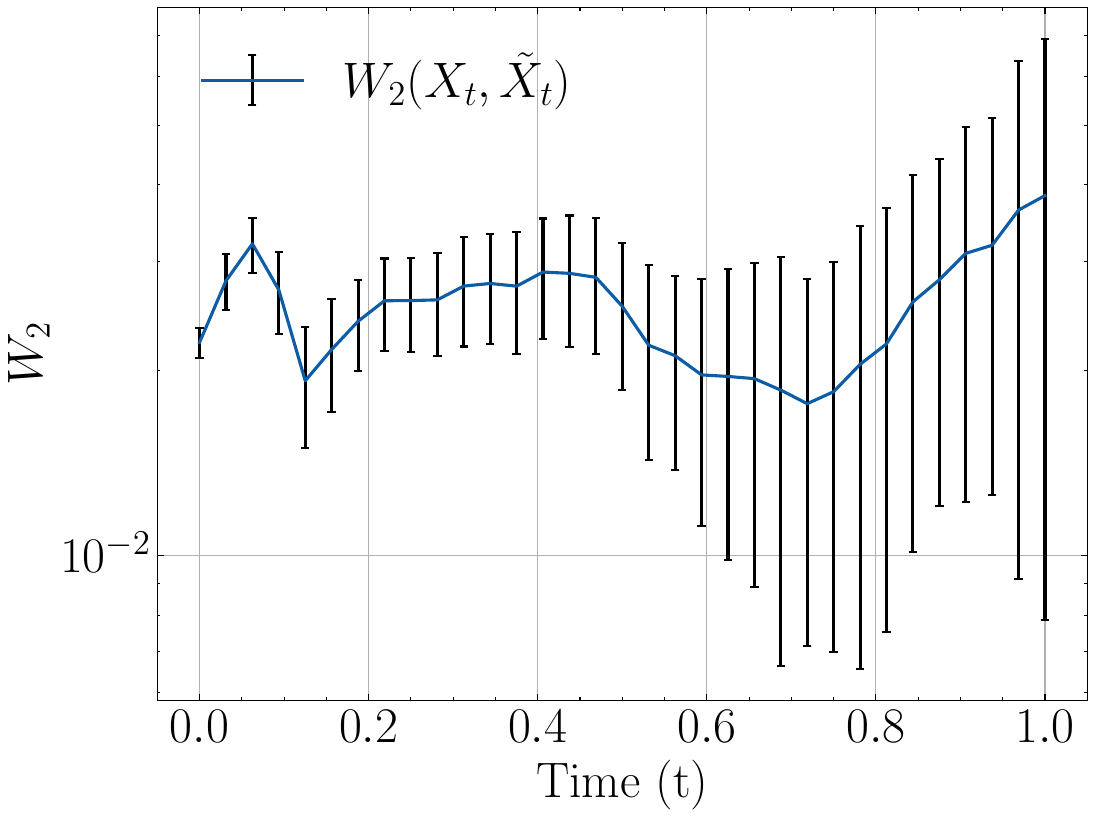}
    }
    \caption{$L^2$ loss and Wasserstein 2-distance over time for the Geometric Brownian motion computed over 2,000 samples and averaged over 100 independent realisations. The error bars correspond to $3\sigma$.}
    \label{fig:gbm_loss}
\end{figure}

\subsection{Multi-dimensional}
\label{sec:multi_d}
We consider the Langevin process for a multidimensional experiment, using the potential of the multivariate Normal distribution. This process describes a particle subject to a potential force, friction, and random noise.
Let $X_t$ be the particle's position at time $t$, with potential $V \in C^1(\R^d)$. The Langevin equation is given by,
\begin{equation*}
    m \frac{\dd^2 X_t}{\dd t^2} = -\nabla V(X_t) - \lambda m \frac{\dd X_t}{\dd t} + \sqrt{kT} \frac{\dd B_t}{\dd t},
\end{equation*}
where $m$ is the mass, $\lambda$ is the friction coefficient, $k$ is the Boltzmann constant, $T$ is the temperature, and $B_t$ is the standard Brownian motion.
Assuming high friction and slow particle movement, we can simplify to,
\begin{equation*}
    \dd X_t = -\nabla V(X_t)\dd t + \sqrt{2}\dd B_t,
\end{equation*}
For an ensemble of particles, we investigate the convergence of their distribution as $t \to \infty$. Assuming $V$ is $m$-strongly convex and the initial distribution has finite second moment, a stationary distribution exists \cite{bj/1178291835}.
The Fokker-Planck equation for the probability density $p(\cdot, t)$ of $X_t$ is,
\begin{equation*}
    \frac{\partial p}{\partial t}(x, t) = \operatorname{div}(\nabla V(x) p(x,t) + \nabla_x p(x,t)),
\end{equation*}
where $\operatorname{div}(f) := \sum_{j=1}^d \partial_j f$. From this equation, we immediately get the stationary distribution,
\begin{equation*}
    p_\infty := \frac{\exp(-V)}{\int \exp(-V) }.
\end{equation*}
It can also be shown that the convergence is exponentially fast.

Consider the multivariate normal distribution with mean $\mu$, covariance matrix $\Sigma$ and probability density function
\begin{equation*}
    \gamma(x) := (2\pi)^{-d/2} \det(\Sigma)^{-1/2} \exp\left(-\frac 1 2 (x-\mu)^T \Sigma^{-1} (x-\mu)\right).
\end{equation*}
The associated potential $V := -\log\gamma$ is given by
\begin{equation*}
    V(x) = \frac d 2 \ln(2\pi) + \frac 1 2 \ln(\det \Sigma) + \frac 1 2 (x-\mu)^T \Sigma^{-1}(x-\mu).
\end{equation*}
Therefore, the respective Langevin process can be written as
\begin{equation*}
    \dd X_t = -\Sigma^{-1}(X_t - \mu)\dd t + \sqrt{2}\dd B_t.
\end{equation*}

For the numerical experiment, we have chosen $d = 5$, $\Sigma = I$ and $\mu = 2 ((-1)^j j)_{j=0}^{d-1}$.
The neural network has the same architecture as in the one dimensional case, except that we now consider $64$ coefficients for each component of the approximation $\tilde X_t$.

\begin{figure}[ht]
    \centering
    \subfloat[$L^2$ loss (blue) and relative $L^2$ loss (red)\label{fig:langevin_l2_loss}]{
        \includegraphics[width=0.5\linewidth]{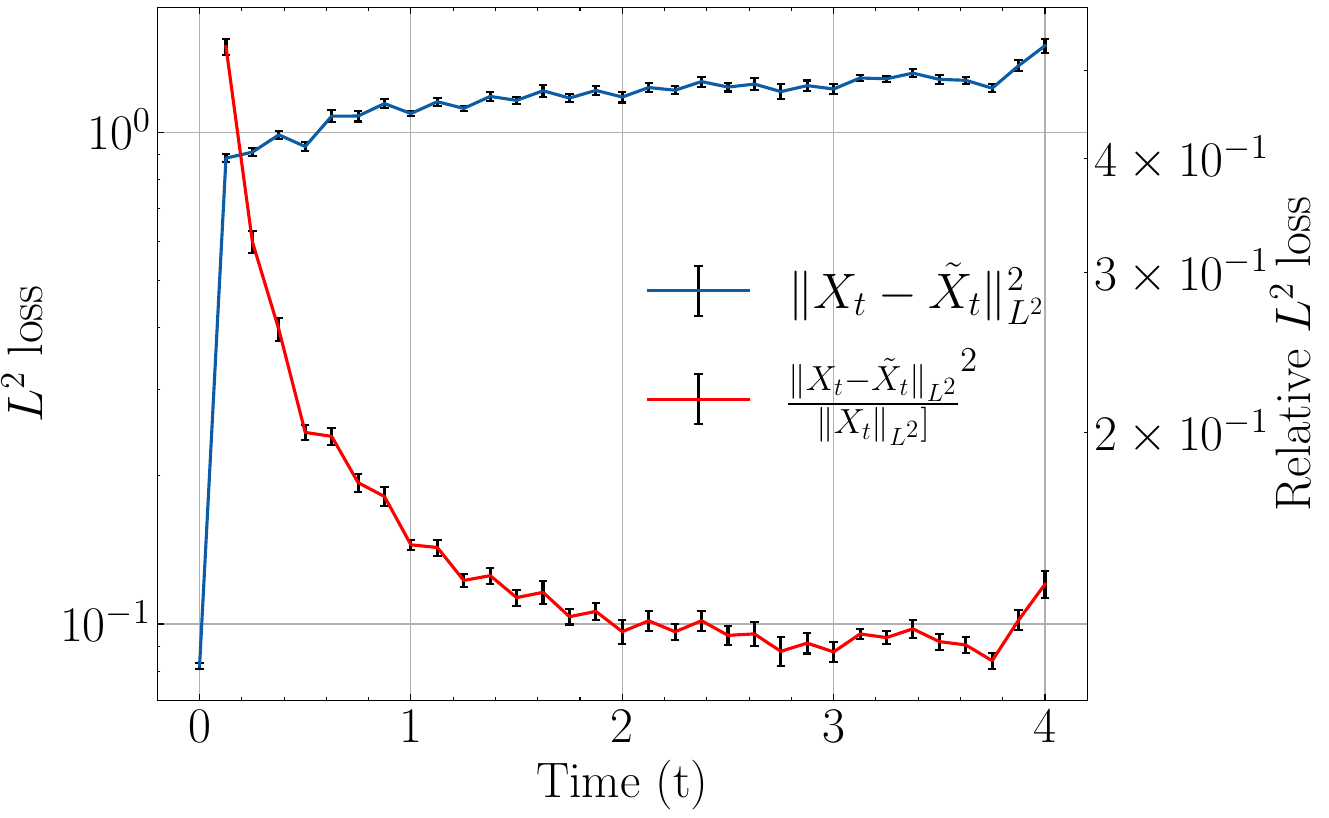}
    }
    \subfloat[$W_2(X_t, \tilde X_t)$ distance\label{fig:langevin_w2}]{
        \includegraphics[width=0.4\linewidth]{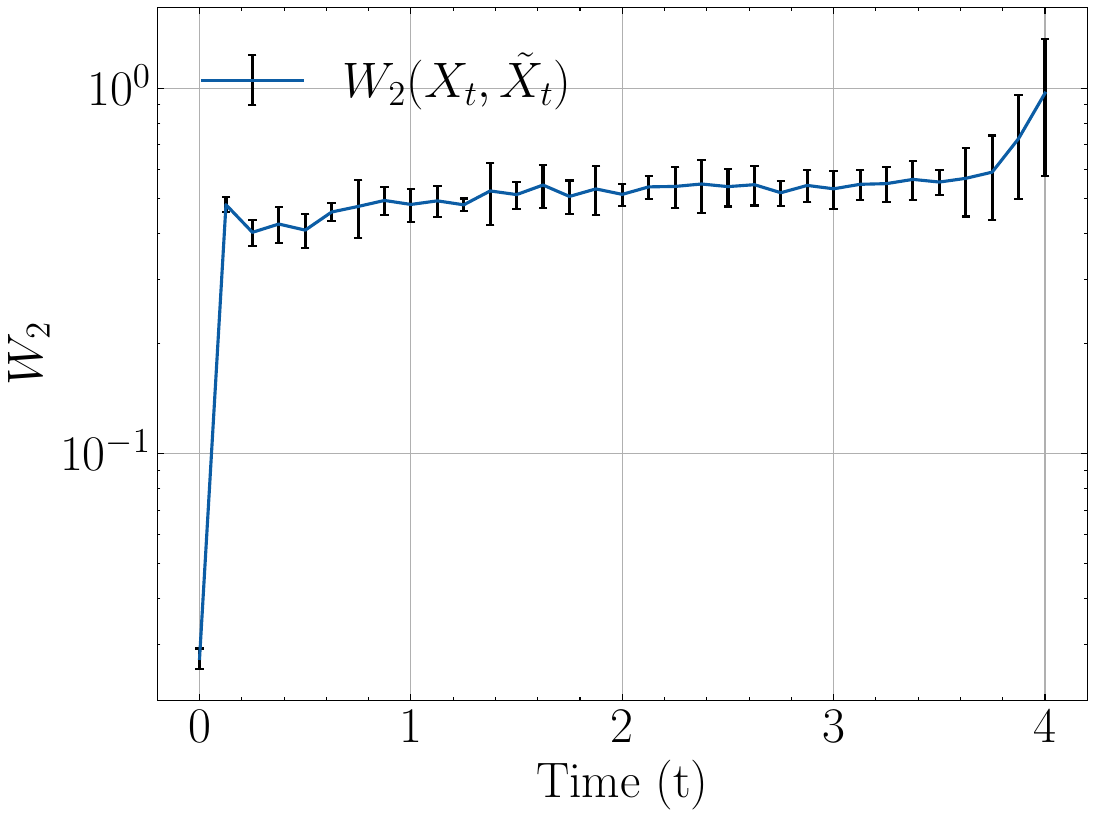}
    }
    \caption{$L^2$ loss and Wasserstein 2-distance over time for the Langevin process, computed over 2,000 samples and averaged over 10 independent realisations. The error bars correspond to $3\sigma$.}
    \label{fig:langevin_loss}
\end{figure}

The error plots in \cref{fig:langevin_loss} show that the model is still able to approximate the stochastic process $X_t$ in higher dimensions, even though the task is structurally more complicated.
We expect that an extended training effort would lead to an even better operator model.
    \section{Conclusion}

In this work, we have developed a new NN architecture called SDEONet to approximate the solution of a SDE \cref{eq:sde} using the notion of DeepONets \cite{luDeepONetLearningNonlinear2021} and a polynomial chaos expansion \cref{eq:chaos_expansion}, which is a standard technique for representing stochastic fields in UQ. Classical methods using polynomial chaos expansion for solving SDE \cite{huschtoAsymptoticErrorChaos2019} struggle to handle the $\binom{p+m}{p}$ coefficients, which grows very quickly when the number of basis elements $m$ or the maximal degree $p$ is increased.
It hence is inevitable to devise an appropriate truncation and compression, that still allows for accurate results in practice.
The method we have developed is a new strategy to learn a sparse Wiener chaos expansion of the solution of the SDE. The analysis shows that the size of the required neural networks is quite small due to the regularity of the coefficients and the use of Hermite polynomials.

The numerical experiments show promising results with small relative $L^2$ and $\mathcal W_2$ errors. However, the stability of the model should be improved when the process has a large variance, like with Geometric Brownian motion. Concerning the multidimensional case, as discussed in \Cref{sec:d dimensional} the experiment in \Cref{sec:multi_d} also suggests that our model is able to accurately approximate the solution of a multidimensional SDE without suffering from the ``curse of dimensionality''.

    \section*{Acknowledgements}
ME acknowledge partial funding from the Deutsche Forschungsgemeinschaft (DFG, German Research Foundation) in the priority programme SPP 2298 ``Theoretical Foundations of Deep Learning''. ME \& CM acknowledge funding by the ANR-DFG project ``COFNET: Compositional functions networks - adaptive learning for high-dimensional approximation and uncertainty quantification''.

    \bibliographystyle{amsplain}
    \bibliography{main}

\providecommand{\bysame}{\leavevmode\hbox to3em{\hrulefill}\thinspace}
\providecommand{\MR}{\relax\ifhmode\unskip\space\fi MR }
\providecommand{\MRhref}[2]{%
  \href{http://www.ams.org/mathscinet-getitem?mr=#1}{#2}
}
\providecommand{\href}[2]{#2}
\begin{thebibliography}{10}

\bibitem{bayer2023pricing}
Christian Bayer, Martin Eigel, Leon Sallandt, and Philipp Trunschke, \emph{Pricing high-dimensional {B}ermudan options with hierarchical tensor formats}, SIAM Journal on Financial Mathematics \textbf{14} (2023), no.~2, 383--406.

\bibitem{beck2021solving}
Christian Beck, Sebastian Becker, Philipp Grohs, Nor Jaafari, and Arnulf Jentzen, \emph{Solving the {Kolmogorov PDE} by means of deep learning}, Journal of Scientific Computing \textbf{88} (2021), 1--28.

\bibitem{becker2019deep}
Sebastian Becker, Patrick Cheridito, and Arnulf Jentzen, \emph{Deep optimal stopping}, The Journal of Machine Learning Research \textbf{20} (2019), no.~1, 2712--2736.

\bibitem{cameron1947orthogonal}
Robert~H Cameron and William~T Martin, \emph{The orthogonal development of non-linear functionals in series of fourier-hermite functionals}, Annals of Mathematics (1947), 385--392.

\bibitem{chizat2020faster}
L\'{e}na\"{\i}c Chizat, Pierre Roussillon, Flavien L\'{e}ger, Fran\c{c}ois-Xavier Vialard, and Gabriel Peyr\'{e}, \emph{Faster {W}asserstein distance estimation with the {S}inkhorn divergence}, Proceedings of the 34th International Conference on Neural Information Processing Systems (Red Hook, NY, USA), NIPS'20, Curran Associates Inc., 2020.

\bibitem{cohen2015approximation}
Albert Cohen and Ronald DeVore, \emph{Approximation of high-dimensional parametric {PDEs}}, Acta Numerica \textbf{24} (2015), 1--159.

\bibitem{NIPS2013_af21d0c9}
Marco Cuturi, \emph{Sinkhorn distances: Lightspeed computation of optimal transport}, Advances in Neural Information Processing Systems (C.J. Burges, L.~Bottou, M.~Welling, Z.~Ghahramani, and K.Q. Weinberger, eds.), vol.~26, Curran Associates, Inc., 2013.

\bibitem{Deng2022}
Beichuan Deng, Yeonjong Shin, Lu~Lu, Zhongqiang Zhang, and George~Em Karniadakis, \emph{Approximation rates of {DeepONets} for learning operators arising from advection{\textendash}diffusion equations}, Neural Networks \textbf{153} (2022), 411--426.

\bibitem{E2017}
Weinan E, Jiequn Han, and Arnulf Jentzen, \emph{Deep learning-based numerical methods for high-dimensional parabolic partial differential equations and backward stochastic differential equations}, Communications in Mathematics and Statistics \textbf{5} (2017), no.~4, 349–380.

\bibitem{eigel2022less}
Martin Eigel, Robert Gruhlke, and David Sommer, \emph{Less interaction with forward models in {L}angevin dynamics}, arXiv preprint arXiv:2212.11528 (2022).

\bibitem{eigel2023approximating}
Martin Eigel, Charles Miranda, Janina Sch{\"u}tte, and David Sommer, \emph{Approximating {L}angevin {Monte Carlo} with {ResNet}-like {Neural Network} architectures}, arXiv preprint arXiv:2311.03242 (2023).

\bibitem{ernst2012convergence}
Oliver~G Ernst, Antje Mugler, Hans-J{\"o}rg Starkloff, and Elisabeth Ullmann, \emph{On the convergence of generalized polynomial chaos expansions}, ESAIM: Mathematical Modelling and Numerical Analysis \textbf{46} (2012), no.~2, 317--339.

\bibitem{gao2023adaptive}
Zhiwei Gao, Liang Yan, and Tao Zhou, \emph{Adaptive operator learning for infinite-dimensional {Bayesian} inverse problems}, arXiv preprint arXiv:2310.17844 (2023).

\bibitem{garbuno2020affine}
Alfredo Garbuno-Inigo, Nikolas Nüsken, and Sebastian Reich, \emph{Affine invariant interacting {L}angevin dynamics for {B}ayesian inference}, SIAM Journal on Applied Dynamical Systems \textbf{19} (2020), no.~3, 1633--1658.

\bibitem{goswami2023physics}
Somdatta Goswami, Aniruddha Bora, Yue Yu, and George~Em Karniadakis, \emph{Physics-informed deep neural operator networks}, Machine Learning in Modeling and Simulation: Methods and Applications, Springer, 2023, pp.~219--254.

\bibitem{hackbusch2014numerical}
Wolfgang Hackbusch, \emph{Numerical tensor calculus}, Acta numerica \textbf{23} (2014), 651--742.

\bibitem{holden1996stochastic}
Helge Holden, Bernt {\O}ksendal, Jan Ub{\o}e, Tusheng Zhang, Helge Holden, Bernt {\O}ksendal, Jan Ub{\o}e, and Tusheng Zhang, \emph{Stochastic partial differential equations}, Springer, 1996.

\bibitem{huschtoAsymptoticErrorChaos2019}
Tony Huschto, Mark Podolskij, and Sebastian Sager, \emph{The asymptotic error of chaos expansion approximations for stochastic differential equations},  (2019), 145--165.

\bibitem{Huschto2014}
Tony Huschto and Sebastian Sager, \emph{Solving stochastic optimal control problems by a {Wiener Chaos} approach}, Vietnam Journal of Mathematics \textbf{42} (2014), no.~1, 83--113.

\bibitem{janson1997gaussian}
Svante Janson, \emph{Gaussian {H}ilbert spaces}, no. 129, Cambridge university press, 1997.

\bibitem{Karatzas1998}
Ioannis Karatzas and Steven~E. Shreve, \emph{Brownian motion and stochastic calculus}, Springer New York, 1998.

\bibitem{Kingma2014AdamAM}
Diederik~P. Kingma and Jimmy Ba, \emph{Adam: A method for stochastic optimization}, CoRR \textbf{abs/1412.6980} (2014).

\bibitem{kovachki2023neural}
Nikola Kovachki, Zongyi Li, Burigede Liu, Kamyar Azizzadenesheli, Kaushik Bhattacharya, Andrew Stuart, and Anima Anandkumar, \emph{Neural operator: Learning maps between function spaces with applications to {PDEs}}, Journal of Machine Learning Research \textbf{24} (2023), no.~89, 1--97.

\bibitem{lanthalerErrorEstimatesDeepONets2022}
Samuel Lanthaler, Siddhartha Mishra, and George~E Karniadakis, \emph{Error estimates for {DeepONets}: A deep learning framework in infinite dimensions},  \textbf{6} (2022), no.~1.

\bibitem{li2021fourier}
Zongyi Li, Nikola~Borislavov Kovachki, Kamyar Azizzadenesheli, Burigede Liu, Kaushik Bhattacharya, Andrew~M. Stuart, and Anima Anandkumar, \emph{Fourier neural operator for parametric partial differential equations}, 9th International Conference on Learning Representations, {ICLR} 2021, OpenReview.net, 2021.

\bibitem{luDeepONetLearningNonlinear2021}
Lu~Lu, Pengzhan Jin, and George~Em Karniadakis, \emph{{{DeepONet}}: {{Learning}} nonlinear operators for identifying differential equations based on the universal approximation theorem of operators},  \textbf{3} (2021), no.~3, 218--229.

\bibitem{Marcati2023}
Carlo Marcati and Christoph Schwab, \emph{Exponential convergence of deep operator networks for elliptic partial differential equations}, {SIAM} Journal on Numerical Analysis \textbf{61} (2023), no.~3, 1513--1545.

\bibitem{miranda2024approximatinglangevinmontecarlo}
Charles Miranda, Janina Schütte, David Sommer, and Martin Eigel, \emph{Approximating langevin monte carlo with resnet-like neural network architectures}, 2024.

\bibitem{nouy2017low}
Anthony Nouy, \emph{Low-rank methods for high-dimensional approximation and model order reduction}, Model Reduction and Approximation: Theory and Algorithms \textbf{15} (2017), no.~171, 3672148.

\bibitem{MR2200233}
David Nualart, \emph{The {M}alliavin calculus and related topics}, second ed., Probability and its Applications (New York), Springer-Verlag, Berlin, 2006. \MR{2200233}

\bibitem{ksendal2003}
Bernt {\O}ksendal, \emph{Stochastic differential equations}, Springer Berlin Heidelberg, 2003.

\bibitem{Opschoor2021}
J.~A.~A. Opschoor, Ch. Schwab, and J.~Zech, \emph{Exponential {ReLU} {DNN} expression of holomorphic maps in high dimension}, Constructive Approximation \textbf{55} (2021), no.~1, 537--582.

\bibitem{oseledets2011tensor}
Ivan~V Oseledets, \emph{Tensor-train decomposition}, SIAM Journal on Scientific Computing \textbf{33} (2011), no.~5, 2295--2317.

\bibitem{Petersen2018}
Philipp Petersen and Felix Voigtlaender, \emph{Optimal approximation of piecewise smooth functions using deep {ReLU} neural networks}, Neural Networks \textbf{108} (2018), 296--330.

\bibitem{bj/1178291835}
Gareth~O. Roberts and Richard~L. Tweedie, \emph{{Exponential convergence of {L}angevin distributions and their discrete approximations}}, Bernoulli \textbf{2} (1996), no.~4, 341 -- 363.

\bibitem{schwab2011sparse}
Christoph Schwab and Claude~Jeffrey Gittelson, \emph{Sparse tensor discretizations of high-dimensional parametric and stochastic {PDEs}}, Acta Numerica \textbf{20} (2011), 291--467.

\bibitem{Schwab2023}
Christoph Schwab and Jakob Zech, \emph{Deep learning in high dimension: Neural network expression rates for analytic functions in {$L^2(\mathbb R^d,\gamma_d)$}}, {SIAM}/{ASA} Journal on Uncertainty Quantification \textbf{11} (2023), no.~1, 199--234.

\bibitem{Stroock1987}
Daniel~W. Stroock, \emph{Homogeneous chaos revisited}, p.~1–7, Springer Berlin Heidelberg, 1987.

\bibitem{stuart2010inverse}
Andrew~M Stuart, \emph{Inverse problems: a {Bayesian} perspective}, Acta numerica \textbf{19} (2010), 451--559.

\bibitem{tianpingchenUniversalApproximationNonlinear1995}
{Tianping Chen} and {Hong Chen}, \emph{Universal approximation to nonlinear operators by neural networks with arbitrary activation functions and its application to dynamical systems},  \textbf{6}, no.~4, 911--917.

\bibitem{yang2023diffusion}
Ling Yang, Zhilong Zhang, Yang Song, Shenda Hong, Runsheng Xu, Yue Zhao, Wentao Zhang, Bin Cui, and Ming-Hsuan Yang, \emph{Diffusion models: A comprehensive survey of methods and applications}, ACM Computing Surveys \textbf{56} (2023), no.~4, 1--39.

\end{thebibliography}

    \appendix
    \section{Proofs of main results}

\setcounter{subsection}{1}

\begin{proof}[Proof of \cref{lemma:ub_trunc_error}]
    Let $m,n \in \N$ and $k = \lfloor \log_2(n) \rfloor$.
    Applying \cref{theorem:l2_error_truncation} leads to
    \begin{equation*}
        \E[(X_t - X_t^{m,n})^2] \leq C(t,K)(1+x_0^2)\left(\frac{1}{(m+1)!} + \sum_{\ell=k+1}^\infty \sum_{j=1}^{2^k} \left(E_{j,\ell}(t)^2 + \int_0^t E_{j,\ell}^2(\tau)\dd\tau\right) \right).
    \end{equation*}
    Integrating from $t=0$ to $T$ yields
    \begin{align*}
        \int_0^T \E[|X_t - X_t^{m,n}|^2]\dd t 
        \leq &\int_0^T C(t,K)(1+x_0^2)\\
        &\times \left(\frac{1}{(m+1)!} + \sum_{\ell=k+1}^\infty\sum_{j=1}^{2^k} \left(E_{j,\ell}(t)^2 + \int_0^t E_{j,\ell}^2(\tau)\dd\tau\right)\right)\dd t.
    \end{align*}
    Then, using \cref{proposition:decay_rest}, we have
    \begin{align*}
        &\int_0^T C(t,K)(1+x_0^2) \left(\frac{1}{(m+1)!} + \sum_{\ell=k+1}^\infty\sum_{j=1}^{2^k} \left(E_{j,\ell}(t)^2 + \int_0^t E_{j,\ell}^2(\tau)\dd\tau\right)\right)\dd t \\
        \leq& (1+x_0^2)\int_0^T C(t, K)\left(\frac{1}{(m+1)!} + \frac{2T(1+t)}{n}\right) \dd t.
    \end{align*}
    The results follows by definition of $k^*$.
\end{proof}

\begin{proof}[Proof of \cref{lemma:decay_squared_coef}]
    Note that
    \begin{equation*}
        X_t = \sum_{n=0}^\infty \sum_{|\alpha|=n}x_\alpha(t)\Psi_\alpha = \sum_{n=0}^\infty I_n(\xi_n(\mathbf t^n; t))
    \end{equation*}
    with $\mathbf t^n := (t_1,\dots,t_n)$. Since $X_t$ is infinitely Malliavin differentiable, by \cref{theorem:kernel_stroock} the symmetric kernel functions are given by $\xi_n(\cdot; t) : \mathbf t^n \mapsto \frac{1}{n!}\E[D_{t_1,\dots,t_n}^n X_t]$ with $n$-th order \emph{Malliavin derivative} of $X_t$ denoted by $D^n X_t$. $I_n$ is the multiple stochastic integral of order $n$ introduced in \cref{eq:multiple_stochastic_integrals}.
    It follows that
    \begin{align*}
        \sum_{\ell=0}^m\sum_{|\alpha|=\ell}x_\alpha(t)^2&= \sum_{\ell=0}^m \E[(I_\ell(\xi_\ell(\mathbf t^\ell; t)))^2]\\
        &=\sum_{\ell=0}^m \ell! \langle \xi_\ell(\mathbf t^\ell; t), \xi_\ell(\mathbf t^\ell; t) \rangle_{L^2([0,T]^\ell)}\\
        &= \sum_{\ell=0}^m \int^{(\ell);t}\E[(D_{t_1,\dots,t_\ell}^\ell X_t)^2]\dd \mathbf t^\ell\\
        &\leq (1+x_0^2)\sum_{\ell=0}^m C^\ell e^{C\ell t} \int^{(\ell);t} \dd \mathbf t^\ell\\
        &= (1+x_0^2)\sum_{\ell=0}^m \frac{(Cte^{Ct})^\ell}{\ell!},
    \end{align*}
    with $C = C(K, T)$ and $\int^{(\ell);t} f(\cdot)\dd\mathbf t^\ell := \int_0^t \int_0^{t_\ell} \dots \int_0^{t_2} f(\cdot) \dd t_1 \dots \dd t_\ell$.
    The inequality is derived by using \cref{proposition:malliavin_bound,theorem:square_int_process}.
    Now, note that $\sum_{\ell=0}^m \frac{(Cte^{Ct})^\ell}{\ell!} \leq e^{Cte^{Ct}} - \frac{(Cte^{Ct})^{m+1}}{(m+1)!}$ using the Taylor-Lagrange formula.
    With this, we obtain
    \begin{equation*}                   \sum_{\ell=0}^m\sum_{|\alpha|=\ell}x_\alpha(t)^2 \leq \left((1+x_0^2)(e^{Cte^{Ct}} - \frac{(Cte^{Ct})^{m+1}}{(m+1)!}\right).
    \end{equation*}
\end{proof}

\begin{proof}[Proof of \cref{lemma:ub_approx_error}]
    By the Cauchy-Schwarz inequality, it follows that
    \begin{align*}
        \left|\sum_{j=1}^p x_{k_j^*}(t)\Psi_{k_j^*} - \sum_{j=1}^p x_{k_j^*}(t)\widetilde{\Psi_j}\right|^2 &= \left|\sum_{j=1}^p x_{k_j^*}(t)(\Psi_{k_j^*}-\widetilde{\Psi_j})\right|^2\\
        &\leq \left(\sum_{j=1}^p x_{k_j^*}(t)^2\right)\left(\sum_{j=1}^p |\Psi_{k_j^*}-\widetilde{\Psi_j}|^2\right).
    \end{align*}
    Taking the expectation and integrating with respect to $t$ leads to
    \begin{align*}
        \int_0^T \E\left[\left|\sum_{j=1}^p x_{k_j^*}(t)\Psi_{k_j^*} - \sum_{j=1}^p x_{k_j^*}(t)\widetilde{\Psi_j}\right|^2\right]\dd t &\leq \int_0^T \left(\sum_{j=1}^p x_{k_j^*}(t)^2\right)\E\left[\sum_{j=1}^p |\Psi_{k_j^*}-\widetilde{\Psi_j}|^2\right]\dd t.
    \end{align*}
    The next step is to find an upper bound of $\int_0^T \left(\sum_{j=1}^p x_{k_j^*}(t)^2\right) \dd t$.
    By \cref{lemma:decay_squared_coef}, with $(m,n) \in J_p$ we have
    \begin{equation*}
        \sum_{\ell = 0}^m \sum_{|\alpha|=\ell} x_\alpha(t)^2 \leq (1+x_0^2)\left(e^{Ct e^{Ct}} - \frac{(Cte^{Ct})^{m+1}}{(m+1)!}\right).
    \end{equation*}
    Integrating with respect to $t$ results in
    \begin{align*}
        \int_0^T \sum_{\ell=0}^m\sum_{|\alpha|=\ell}x_\alpha(t)^2 \dd t &\leq (1+x_0^2)\left(\underbrace{\int_0^T e^{Cte^{Ct}} \dd t}_{=: A(K,T)} - \int_0^T \frac{(Cte^{Ct})^{m+1}}{(m+1)!}\dd t\right)\\
        &= (1+x_0^2)\left(A(K,T) - \frac{1}{(m+1)!}\sum_{\ell=0}^{m+1}\binom{m+1}{\ell}\int_0^T (Ct)^{2(m+1)-\ell}\dd t\right)\\
        &= (1+x_0^2)\left(A(K,T) - \frac{1}{(m+1)!}\sum_{\ell=0}^{m+1}\binom{m+1}{\ell}\frac{T(CT)^{2(m+1)-\ell}}{2(m+1)-k+1}\right)\\
        &\leq (1+x_0^2)\left(A(K,T) - \frac{1}{(m+1)!}\sum_{\ell=0}^{m+1}\binom{m+1}{\ell}\frac{T(CT)^{2(m+1)-\ell}}{2(m+1)+1}\right)\\
        &= (1+x_0^2)\left(A(K,T) - \frac{1}{(m+1)!}\frac{T(CT)^{2(m+1)}}{2(m+1)+1}\left(1+\frac{1}{CT}\right)^{m+1}\right).
    \end{align*}
    By definition of $k^*$ it thus follows for all $(m,n) \in J_p$ that
    \begin{equation*}
        \int_0^T \left(\sum_{j=1}^p x_{k_j^*}(t)^2\right) \dd t \leq (1+x_0^2)\left(A(K,T) - \frac{1}{(m+1)!}\frac{T(CT)^{2(m+1)}}{2(m+1)+1}\left(1+\frac{1}{CT}\right)^{m+1}\right).
    \end{equation*}

    Combining the above results, we obtain
    \begin{align*}
        \hat E_\text{approx}^2 &\leq \E\left[\sum_{j=1}^p |\Psi_{k_j^*}-\widetilde{\Psi_j}|^2\right]\\ &\qquad\times \min_{(m,n) \in J_p}(1+x_0^2)\left(A(K,T) - \frac{1}{(m+1)!}\frac{T(CT)^{2(m+1)}}{2(m+1)+1}\left(1+\frac{1}{CT}\right)^{m+1}\right).
    \end{align*}
\end{proof}

\begin{proof}[Proof of \cref{corollary:approx_1d_odes}]
    \textbf{Case $m=1$}.\\
    Under the assumptions on $f$, problem~\cref{eq:1d_ode} has a unique solution $X : [t_0, t_1] \to \R$ by the Cauchy-Lipschitz (Picard-Lindel\"of) theorem.
    By induction on $k$ it can be shown that $X \in C^{k+1}$.
    Note that the solution $X$ is $(k,1)$-H\"older continuous.
    Let 
    \begin{equation*}
        T : \left\{
        \begin{array}{rl}
            [-1/2,1/2] & \longrightarrow [t_0,t_1] \\
            x & \longmapsto (x+\frac 1 2)(t_1-t_0) + t_0,
        \end{array}
        \right.
    \end{equation*}
    which is a diffeomorphism. Define $Y = X \circ T : [-1/2,1/2] \to \R$, which is $(k,1)$-H\"older continuous on $[-1/2,1/2]$.
    Let $B := \max_{\alpha \in \{0,\dots,k\}} \left\|\frac{\dd^\alpha Y}{\dd x^\alpha}\right\|_\infty$.
    Applying \cref{theorem:approx_holder_nn}, there exists a constant $c = c(k, B) > 0$ and a neural network $\Phi_\varepsilon^Y$ such that
    \begin{align*}
        \Depth(\Phi_\varepsilon^Y) &\leq (2 + \lceil \log_2(k + 1)\rceil)(12+k),\\
        \Size(\Phi_\varepsilon^Y) &\leq c\left(\frac{\varepsilon}{(t_1-t_0)^{1/p}}\right)^{-\frac{1}{k+1}}.
    \end{align*}
    Moreover,
    \begin{align*}
        \|R_\sigma(\Phi_\varepsilon^Y) - Y\|_{L^p([-1/2,1/2])} &< \frac{\varepsilon}{(t_1-t_0)^{1/p}},\\
        \|R_\sigma(\Phi_\varepsilon^Y)\|_\infty &\leq \lceil B \rceil.
    \end{align*}
    Next, note that
    \begin{align*}
        \|R_\sigma(\Phi_\varepsilon^Y) - Y\|_{L^p([-1/2,1/2])}^p &= \frac{1}{t_1-t_0}\int_{t_0}^{t_1} (R_\sigma(\Phi_\varepsilon^Y)(T^{-1}(x)) - Y(T^{-1}(x)))^p \dd x\\
        &=\frac{1}{t_1-t_0}\|R_\sigma(\Phi_\varepsilon^Y)\circ T^{-1} - X\|_{L^p([t_0,t_1])}^p.
    \end{align*}
    If we write $\Phi_\varepsilon^Y = ((W^1, b^1),\dots,(W^L,b^L))$ then 
    \begin{equation*}
        \Phi_\varepsilon^X = ((\frac{1}{t_1-t_0}W^1, b^1 - \left(\frac{t_0}{t_1-t_0} + \frac 1 2\right)W^1), (W^2, b^2), \dots, (W^L, b^L))
    \end{equation*}
    satisfies $R_\sigma\Phi_\varepsilon^Y \circ T^{-1} = R_\sigma\Phi_\varepsilon^X$ and $\Size(\Phi_\varepsilon^Y) = \Size(\Phi_\varepsilon^X)$.
    Using the previous equation, we deduce that
    \begin{equation*}
        \|R_\sigma(\Phi_\varepsilon^X) - X\|_{L^p([t_0,t_1])} \leq \varepsilon.
    \end{equation*}

    \textbf{Case $m > 1$}.\\
    Let $(\Phi_\varepsilon^{X_k})_{k=1}^m$ be $m$ neural networks that approximate $X_k$ with accuracy $\frac{\varepsilon}{m^{1/p}}$ in the $L^p$ norm.
    Consider $\Phi_\varepsilon^X = P((\Phi_\varepsilon^{X_k})_{k=1}^m)$ as a parallelisation according to \cref{proposition:parallelisation} with $\Size(\Phi_\varepsilon^X) = \sum_{k=1}^m \Size(\Phi_\varepsilon^{X_k})$ and $R_\sigma \Phi_\varepsilon^X = (R_\sigma \Phi_\varepsilon^{X_k})_{k=1}^m$.
    Then,
    \begin{align*}
        \|R_\sigma \Phi_\varepsilon^X - X\|_{L^p([t_0,t_1])}^p &\leq \sum_{j=1}^m \|R_\sigma \Phi_\varepsilon^{X_k} - X_k\|_{L^p([t_0,t_1])}^p\\
        &\leq m \left(\frac{\varepsilon}{m^{1/p}}\right)^p\\
        &=\varepsilon^p.
    \end{align*}
\end{proof}
    
\end{document}